\newtheorem{theorem}{Theorem}[section]
\newtheorem{lemma}[theorem]{Lemma}
\newtheorem{proposition}[theorem]{Proposition}
\theoremstyle{definition}
\newtheorem{remark}[theorem]{Remark}
\newcommand{\R}{{\mathbb R \,}}
\newcommand{\QQ}{\mathcal{Q}}
\newcommand{\BHO}{B_{H_0}}
\newcommand{\ep}{\varepsilon}
\newcommand{\pnud}{H(\nabla u_\delta) \nabla H(\nabla u_\delta) \cdot \nu}
\newcommand{\pnuz}{H(\nabla u_0) \nabla H(\nabla u_0) \cdot \nu}
\numberwithin{equation}{section}
\begin{document}
    \title[Gradient blow-up in anisotropic perfect conductivity problems]{Gradient estimates for the perfect conductivity problem in anisotropic media}

  \date{}
%
%
%
%
%
\author{Giulio Ciraolo and Angela Sciammetta}

 \address{Giulio Ciraolo \\ Dipartimento di Matematica e Informatica \\ Universit\`a di Palermo\\ Via Archirafi 34\\ 90123 Palermo\\ Italy}
\email{giulio.ciraolo@unipa.it}

 \address{Angela Sciammetta \\ Dipartimento di Matematica e Informatica \\ Universit\`a di Palermo\\ Via Archirafi 34\\ 90123 Palermo\\ Italy}
\email{angela.sciammetta@unipa.it}


\keywords{Gradient blow-up, Finsler Laplacian, perfect conductor}
    \subjclass{Primary: 35J25, 35B44, 35B50; Secondary: 35J62, 78A48, 58J60.}

\begin{abstract}
We study the perfect conductivity problem when two perfectly conducting inclusions are closely located to each other in an anisotropic background medium. We establish optimal upper and lower gradient bounds for the solution in any dimension which characterize the singular behavior of the electric field as the distance between the inclusions goes to zero.
\end{abstract}

\maketitle


\section{Introduction}
When two perfectly conducting inclusions are located closely to each other, the electric field may become arbitrarily large as the distance between the inclusions goes to zero. We aim at establishing optimal estimates for the electric field as the distance between the inclusions goes to zero. The background medium may be anisotropic, with anisotropy determined by a norm in $\R^N$, $N \geq 2$.

\subsection{Gradient estimates for the conductivity problem}

Let $\Omega \subset \R^N$, $N \geq 2$, be a domain representing the background medium. Denoting the two inclusions by $D^1_\delta,D^2_\delta \subset \Omega$,
where
$
\delta = {\rm dist} (D_\delta^1,D_\delta^2)
$
is assumed to be \emph{small},
the perfectly conductivity problem is formulated as follows
\begin{equation} \label{perf_cond_isotr}
\begin{cases}
\Delta u = 0 & \text{ in } \Omega_\delta \\
|\nabla u|=0 & \text{ in } D^i_\delta \,, i=1,2, \\
\displaystyle\int_{\partial D^i_\delta} u_\nu = 0 & i=1,2,  \\
u = \varphi & \text{ on } \partial \Omega \,,
\end{cases}
\end{equation}
where $\varphi \in C^0(\partial \Omega)$ is some given potential prescribed on the boundary of $\Omega$.

Problem \eqref{perf_cond_isotr} may be regarded as a conductivity problem in the context of electromagnetism
or as an anti-plane elasticity problem in the context of elasticity, and the gradient of the solution $u$ is either the electrical field or the stress, respectively. Furthermore, problem \eqref{perf_cond_isotr} may be seen as a limit case (for $k \to +\infty$) of the classical conductivity problem
\begin{equation} \label{CP}
\begin{cases}
\text{div}\left(a_k(x)\nabla u \right) = 0& \hbox{in } \,  \Omega, \\
u=\varphi & \hbox{on } \, \partial \Omega,
\end{cases}
\end{equation}
where
\begin{equation*}
a_k(x)=\left\{
  \begin{array}{ll}
    1, & \hbox{$\Omega$,} \\
    k, & \hbox{$D^1_\delta \cup D^2_\delta$,}
  \end{array}
\right.
\end{equation*}
with $k \in (0,+\infty)$ (see for instance \cite{BaoLiYin}).

Assuming that $D^1_\delta$ and $D^2_\delta$ are smooth and far away from the boundary of $\Omega$,
the problem of estimating $|\nabla u|$ as $\delta$ goes to zero was first raised in \cite{BabuskaEtc} in relation to stress
analysis of composites and many results have been obtained in the last two decades.

Regarding the classical conductivity problem \eqref{CP} (so that $k>0$ is finite),
in \cite{BabuskaEtc} the authors observed numerically that $\|\nabla u_\delta\|_{L^\infty(\Omega)}$ is bounded independently of the distance $\delta$ between $D^1_\delta$ and $D^2_\delta$. This result was proved rigorously by Bonnetier and Vogelius \cite{BonnVog} for $N=2$ and assuming $D^1$ and $D^2$ to be two unit balls, and it was extended by Li and Vogelius in \cite{LiVog} to general second order elliptic equations with piecewise smooth coefficients (see also \cite{LiNir} where Li and Nirenberg considered general second order elliptic systems).

When $k$ degenerates ($k \to 0$ or $k \to +\infty$) the scenario is very different: the gradient of the solution may be unbounded as $\delta \to 0$ and the blow-up rate depends on the dimension. Indeed,  it has been proved that the optimal blow-up rate of $|\nabla u|$ is $\delta^{-1/2}$ for $N=2$, it is $(\delta |\log \delta|)^{-1}$ for $N=3$  and $\delta^{-1}$ for $N \geq 4$, see \cite{ACKLY,AKL,Ref3,BaoLiLi,BaoLiYin,BaoLiYin_II,BaoLiYin_III,BaoLiYin_IV,BaoLiYin_V,Gorb,GorbNovikov,KLY,Kangy_Yu2017,Kang_Yun2017,Novikov,LL,Yun,YunII} and references therein.

\subsection{The anisotropic conductivity problem}
Our goal is to obtain gradient estimates for the perfectly conductivity problem when the background medium is anisotropic, with anisotropy described by a norm $H$. More precisely, the involved anisotropy arises from replacing the Euclidean norm of the gradient with an arbitrary norm in the associated variational integrals. 

The kind of anisotropy considered in this paper has been widely studied in the field of anisotropic geometric functionals in the mathematical theory of crystals and composites which goes back to Wulff \cite{Wulff}. Indeed, variational problems in anisotropic media naturally arise in the study of crystals and whenever the microscopic environment of the interface of a medium is different from the one in the bulk of the substance so that anisotropic surface energies have to be considered. Moreover, these kinds of anisotropy are of strong interest in elasticity, noise-removal procedures in digital image processing, crystalline mean curvature flows and crystalline fracture theory. The literature is very wide and we just mention \cite{BNP,BP,Benviste2006,BC,BCS,Ch,CFV,CFV2,DellaPietraGavitone,KangMiltonARMA2008,NP,OBGXY,Taylor,TCH} and references therein for an interested reader.

In order to properly state the problem, it is convenient to look at problem \eqref{CP} from a variational point of view. More precisely, problem \eqref{CP} can be seen as the Euler-Lagrange equation of the variational problem\begin{equation*}
  \min_{v \in W^{1,2}_{\varphi}(\Omega)}I[v]\,,
\end{equation*}
where
\begin{equation*}
I[v]=\dfrac{1}{2}\int_{\Omega}a_k(x)|\nabla v|^2 \,,
\end{equation*}
and
\begin{equation*}
  W^{1,2}_{\varphi}(\Omega)=\left\{v \in W^{1,2}(\Omega) : v =\varphi \,\, \text{on} \,\, \partial \Omega \right\} \,.
\end{equation*}
It is well-known that there exists a unique solution $u \in W^{1,2}(\Omega)$ to \eqref{CP}, which is also the minimizer of $I$ on $ W^{1,2}_{\varphi}(\Omega)$ (see for instance \cite{BaoLiLi}).
\begin{center}
\begin{figure}[h]
\includegraphics[scale=0.5]{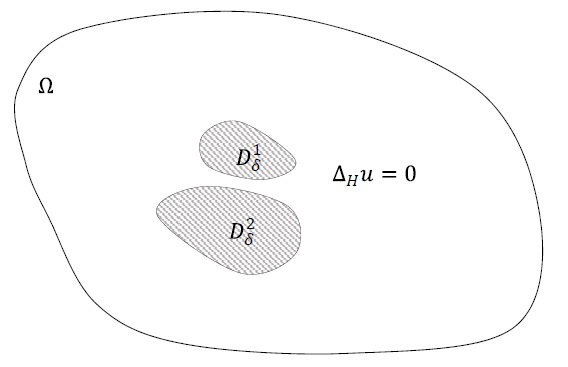}
\caption{Two perfectly conducing inclusions $D_\delta^1$ and $D_\delta^2$ are immersed in anisotropic matrix $\Omega$.}\label{FigIniziale}
\end{figure}
\end{center}

Analogously, the extreme conductivity problem \eqref{perf_cond_isotr} can be seen as the Euler-Lagrange equation of the variational problem
\begin{equation*}
\min_{v \in W^{1,2}_{\varphi}(\Omega)} \left\{ \frac12  \int_\Omega |\nabla v|^2 dx \, :  \ |\nabla v|=0 \ \text{ in } D^i_\delta \,, \ i=1,2  \right\} \,.
\end{equation*}

When the background medium is anisotropic (see Fig.\ref{FigIniziale}) the corresponding variational problem is given by
\begin{equation}\label{energia}
\min_{v \in W^{1,2}_{\varphi}(\Omega)} \left\{ \frac12  \int_\Omega H(\nabla v)^2 dx \, :  \ H(\nabla v)=0 \ \text{ in } D^i_\delta \,, \ i=1,2  \right\} \,,
\end{equation}
where $H$ is a norm in $\R^N$, $N \geq 2$; moreover, we shall assume that $H^2$ is strictly convex and of class $C^3(\R^N \setminus \{O\})$.
Since $H^2$ is a convex function with quadratic growth, problem \eqref{energia} has a solution for every bounded open set $\Omega$ and, since $H^2$ is strictly convex and sufficiently smooth, the solution is unique. Moreover (see Appendix \ref{appendix1}), the Euler-Lagrange equation associated to \eqref{energia} is 
\begin{equation} \label{DH}
\left\{
   \begin{array}{ll}
    \triangle_H u_{\delta} = 0& \hbox{in} \,\,\, \Omega_{\delta}, \\
 H(\nabla u_{\delta}) = 0 & \hbox{in} \,\,\, \overline{D}_{\delta}^i, \ i=1,2\,, \\ 
     \displaystyle\int_{\partial D_\delta^i} H\left(\nabla u_{\delta}\right)\nabla_{\xi}H\left(\nabla u_{\delta}\right)\cdot \nu ds=0 & i=1,2,\\
       u_{\delta}=\varphi(x) & \hbox{on} \,\,\, \partial \Omega \,,
    \end{array}
\right.
\end{equation}
where $\Omega_\delta = \Omega \setminus (\overline{D^1_\delta \cup D^2_\delta})$, $\nu$ is the outward normal to $\partial D_\delta^i$, and $\Delta_H$ denotes the Finsler Laplacian
$$
\Delta_H u_\delta = \text{div}\big(H\left(\nabla u_{\delta}\right) \nabla_{\xi} H\left(\nabla u_{\delta}\right) \big) \,,
$$
which has to be understood in the weak sense
$$
\int_{\Omega_\delta} H\left(\nabla u_{\delta}\right) \nabla_{\xi} H\left(\nabla u_{\delta}\right)  \cdot \nabla \phi \, dx = 0 \qquad \text{ for any } \phi \in C_0^1(\Omega_\delta) \,.
$$
Here and in the following, for $\nabla_\xi H (\nabla u)$ we mean the gradient of $H$ evaluated at $\nabla u(x)$, for $x \in \overline{\Omega}$. To avoid a confusing notation, we will use the variable $x$ for a point in the ambient space $\Omega \subset \mathbb{R}^N$, and the variable $\xi$ for a vector in the dual space (which is the ambient space of $\nabla u$).

Coming back to problem \eqref{DH}, we notice that $u_{\delta}$ is constant on each particle $D_{\delta}^i$ with $i=1,2$, i.e.
\begin{equation}\label{potenziale}
  u_{\delta}=\mathcal{U}^i_{\delta} \quad  \text{ on } D^i_{\delta} \,,
\end{equation}
with $\mathcal{U}^i_{\delta} \in \mathbb{R}$, $i=1,2$. We emphasize that $\mathcal{U}^1_{\delta}$ and $\mathcal{U}^2_{\delta}$ may be  different, and their values are unknown and are determined by solving the minimization problem \eqref{energia}.

When $\delta=0$, the corresponding perfectly conductivity problem is given by
\begin{equation} \label{D0}
\left\{
   \begin{array}{ll}
    \triangle_H u_{0} = 0& \hbox{in} \,\,\, \Omega_{0}, \\
 H(\nabla u_{0}) = 0 & \hbox{in} \,\,\, \overline{D_{0}^i}, \,\, i=1,2, \\
     \displaystyle \sum_{i=1,2} \int_{\partial D_0^i} H\left(\nabla u_{0}\right)\nabla_{\xi}H\left(\nabla u_{0}\right)\cdot \nu ds=0  \,,& \\
       u_{0}=\varphi(x) & \hbox{on} \,\,\, \partial \Omega \,.
    \end{array}
\right.
\end{equation}
We notice that the third condition in \eqref{D0} is different from third condition in \eqref{DH}, since in \eqref{D0} it is required that the sum of the two integrals on $\partial D_0^1$ and $\partial D_0^2$ vanishes. It is important to emphasize that the solution $u_0$ of \eqref{D0} is not the limit of $u_\delta$ as $\delta \to 0^+$. Even if there is some connection between $u_\delta$ and $u_0$ (see discussion below on the parameter $\mathcal{R}_0$), the behaviour of $u_\delta$ and $u_0$ is very different close to the limit touching point between the two inclusions. As we will show, $H(\nabla u_0)$ is bounded in $\Omega_0$, while $H(\nabla u_\delta)$ may have a blow-up at the limit for $\delta\to 0^+$. Understanding this phenomenon is the main goal of this paper, and the blow-up of $\nabla u_\delta$ will be characterized be the following quantity
\begin{equation} \label{R0}
\mathcal{R}_0 = \int_{\partial D^1_0} H(\nabla u_0) \nabla H(\nabla u_0) \cdot \nu \,.
\end{equation}

\subsection{Main result}
The goal of this paper is to study the gradient blow-up for problem \eqref{DH} under suitable regularity assumptions on the norm $H$. Before describing the main results, we recall some basic facts about norms in $\R^N$ (see Section \ref{Basic notations and preliminary results} for more details).

Given a norm $H$ in $\R^N$ (which we consider centrally symmetric), we denote by $H_0$ the dual norm. We recall that the sets of the form $\{H_0(x-x_0)=\text{const}\}$ are called \emph{Wulff shapes} (or \emph{anisotropic balls}).

Let $\delta \geq 0$ and let $D_\delta^1$ and $D_\delta^2$  be two perfectly conducting inclusions with $\overline{D_\delta^1}, \, \overline{D_\delta^2} \subset \Omega$ which are at distance $\delta$ one from each other. We define
$$
\Omega_{\delta} = \Omega \setminus \left(\overline{D_\delta^1 \cup D_\delta^2} \right) \,,
$$
so that, when the inclusions touch at the limit $\delta = 0$, we write
$$
\Omega_{0} = \Omega \setminus \left(\overline{D_0^1 \cup D_0^2} \right).
$$
We assume that at the limit the two particles touch only at the origin, so that 
$$
\overline{D_0^1} \cap \overline{D_0^2}=\{O\} \,.
$$

In this paper we consider the case when $D_\delta^1$ and $D_\delta^2$ are two Wulff shapes of radius $R_1$ and $R_2$, respectively, i.e.
\begin{equation*}
  D_\delta^i=B_{H_{0}}\left(x_{\delta}^i,R_i\right),
\end{equation*}
with $x_{\delta}^i=(0, \ldots, 0, t_{\delta}^i)$, with $t_{\delta}^i \in \mathbb{R}$ such that
\begin{equation*}
  H_{0}\left(x_{\delta}^i\right)=R+\dfrac{\delta}{2}, \,\, i=1,2.
\end{equation*}
Assuming that $D_\delta^1$ and $D_\delta^2$ are two Wulff shapes simplifies the calculations and the exposition. The approach can be adapted to study inclusions with boundary of class $C^3$ which are strictly convex close to the (unique) touching point.

\begin{figure}
\includegraphics[scale=0.5]{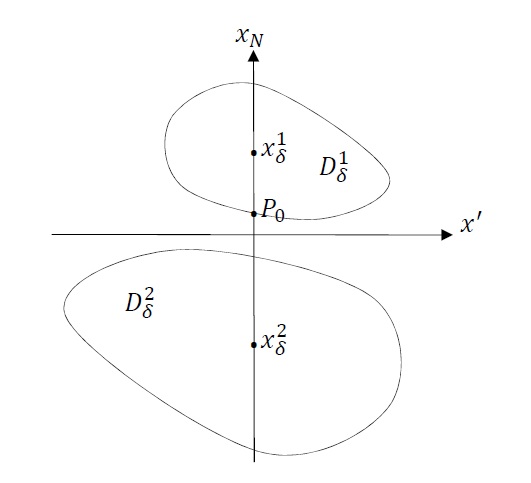}
\caption{The point $P_0$ is such that $P_0=R_1 \hat P + x_\delta^1$.} \label{fig_P0}
\end{figure}

Regarding the geometry of the problem, we recall that if $x$ is a tangency point between two Wulff shapes, then $x$ lies on the segment joining the the centers of the two sets (see Fig.\ref{fig_P0}), which is parallel to $\nabla H(\nu(x))$, where $\nu$ is the Euclidean normal (see Remark \ref{tangent} below for a proof).

We assume that
\begin{equation} \label{dist_B1B2_K}
  \text{dist}_{H_0}\left(\partial \Omega, D^1_{\delta} \cup D^2_{\delta} \right)\geq K,
\end{equation}
for some fixed $K>0$ and that the distance between the two (anisotropic) balls is very small, so that
\begin{equation*}
  \text{dist}_{H_0}\left( D^1_{\delta},  D^2_{\delta}\right) = \delta,
\end{equation*}
for some $0 < \delta \leq \delta_0$. Here, $\text{dist}_{H_0}$ denotes the distance in the ambient norm $H_0$.

Let $\hat P=(0, \ldots, 0, t_0)$ be such that $\hat P \in \partial B_{H_{0}}(0,1)$ and consider the matrix $\nabla^2H_0(\hat P)$.\footnote{Notice that, even if $\hat P$ is not univocally determined (i.e. there are two points of $\partial B_{H_{0}}(0,1)$ lying on the $x_N-$axis), the matrix $\nabla^2H_0(\hat P)$ is well defined because $H_0$ is centrally symmetric.}
We denote by $\QQ$ the matrix obtained  by considering the first $N-1$ rows and $N-1$ columns of $\nabla^2H_0(\hat P)$, i.e.
\begin{equation}\label{matriceQ}
\QQ=\left(
\begin{array}{cccc}
      \partial^2_{\xi_1\xi_1}H_0(\hat P) & \ldots & \partial^2_{\xi_1\xi_{N-1}}H_0(\hat P) \\
         \vdots & \ddots & \vdots \\
      \partial^2_{\xi_{N-1}\xi_1}H_0(\hat P) & \ldots & \partial^2_{\xi_{N-1}\xi_{N-1}}H_0(\hat P) \\
    \end{array}
  \right) \,,
\end{equation}
and recall the definition of \emph{anisotropic normal} $\nu_H$ at a point $x$, which is given by
$$
\nu_H (x) = \nabla_{\xi}H\left(\nu(x)\right) \,,
$$
where $\nu(x)$ denotes the outward Euclidean normal at $x$.
Our main result is the following.

\begin{theorem} \label{thm_main_1}
Let $u_{\delta}$ be the solution to \eqref{DH} and let $\mathcal{R}_0$ be given by \eqref{R0}. For any fixed $\tau \in (0,1/2]$ we have 
\begin{equation*}
  (1-\tau) C_*\Phi_N(\delta)+o\left(\Phi_N(\delta)\right)\leq \|\nabla u_{\delta}\|_{L^{\infty}(\Omega_{\delta})} \leq (1+\tau) C_*\Phi_N(\delta)+o\left(\Phi_N(\delta)\right)
\end{equation*}
as $\delta \to 0^+$, with
$$
\Phi_N(\delta)=\begin{cases}
\dfrac{1}{\sqrt{\delta}} & N = 2 \,, \\
\dfrac{1}{\delta|\ln\delta|} & N=3 \,, \\
\dfrac{1}{\delta} & N \geq 4 \,,
\end{cases}
$$
and
$$
C_* = \left(\dfrac{R_1+R_2}{2R_1R_2}\right)^{\frac{N-1}{2}}\left({\rm det}(\mathcal{Q})\right)^{\frac{N-1}{2}}\mathcal{R}_0 C \,,
$$
where $Q$ is given by \eqref{matriceQ} and $C$ depends on $N$ and $\nu_{H}(\hat P) \cdot \nu(\hat P)$.
\end{theorem}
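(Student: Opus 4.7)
The strategy is the one developed by Bao--Li--Yin, Yun, and others for the isotropic conductivity problem, adapted to handle the quasilinear Finsler Laplacian and the anisotropic geometry of touching Wulff shapes. The analysis localizes near the touching point $O$: away from $O$, the linearization of $\Delta_H$ is uniformly elliptic (thanks to strict convexity and $C^3$ regularity of $H^2$), so standard Schauder and De Giorgi--Nash--Moser estimates give $|\nabla u_\delta|\leq C$ uniformly in $\delta$ on any compact subset of $\Omega_\delta\setminus B_{H_0}(O,r_0)$. The whole problem therefore reduces to understanding $\nabla u_\delta$ in the neck $B_{H_0}(O,r_0)\cap\Omega_\delta$.

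In the neck I would expand $H_0$ to second order at $\hat P$. By central symmetry and the duality between $H$ and $H_0$, one has $\nu_H(\hat P)=\hat P$ and $\partial_i H_0(\hat P)=0$ for $i<N$, so the Wulff shape boundaries are graphs over $\{x_N=0\}$ and a short implicit function theorem computation yields the vertical separation
\begin{equation*}
h(x')\;=\;\bigl(\nu_H(\hat P)\cdot\nu(\hat P)\bigr)\left(\delta+\frac{R_1+R_2}{2R_1R_2}\langle \QQ\, x',x'\rangle\right)+o(|x'|^2).
\end{equation*}
I would then build in the neck the explicit profile
\begin{equation*}
\bar u(x',x_N)\;=\;\frac{\mathcal{U}_\delta^1+\mathcal{U}_\delta^2}{2}\;+\;(\mathcal{U}_\delta^1-\mathcal{U}_\delta^2)\,\frac{x_N}{h(x')},
\end{equation*}
matched smoothly to a fixed extension outside. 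A barrier/comparison argument exploiting the monotonicity of $\xi\mapsto \tfrac12\nabla_\xi(H^2)(\xi)$ should give
\begin{equation*}
\bigl\|\nabla(u_\delta-\bar u)\bigr\|_{L^\infty(\Omega_\delta)}\leq C,
\end{equation*}
uniformly in $\delta$, so that $\|\nabla u_\delta\|_{L^\infty(\Omega_\delta)}=|\mathcal{U}_\delta^1-\mathcal{U}_\delta^2|/(\nu_H(\hat P)\cdot\nu(\hat P))\,\delta\cdot(1+o(1))$.

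To pin down $\mathcal{U}_\delta^1-\mathcal{U}_\delta^2$ I would insert the decomposition $u_\delta=\bar u+(u_\delta-\bar u)$ into the flux condition $\int_{\partial D^1_\delta} H(\nabla u_\delta)\nabla_\xi H(\nabla u_\delta)\cdot\nu\,ds=0$. Since $H\nabla_\xi H=\tfrac12\nabla_\xi(H^2)$ is $C^1$ off the origin, the boundary integral splits into a ``neck'' piece (dominated by $\bar u$ and essentially proportional to $(\mathcal{U}_\delta^1-\mathcal{U}_\delta^2)\int h^{-1}$) and a ``far'' piece which by stability $u_\delta\to u_0$ on compact subsets of $\Omega_0$ converges to $\mathcal{R}_0$. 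This yields an identity of the form
\begin{equation*}
(\mathcal{U}_\delta^1-\mathcal{U}_\delta^2)\cdot I_N(\delta)\;=\;\mathcal{R}_0+o(1).
\end{equation*}
The anisotropic change of variables $x'=\sqrt{\delta}\,\QQ^{-1/2}y$, followed by the standard rescaling by $\sqrt{(R_1+R_2)/(2R_1R_2)}$, converts $h(x')$ into $(\nu_H(\hat P)\cdot\nu(\hat P))\,\delta(1+|y|^2)$ and produces a Jacobian factor $\delta^{(N-1)/2}(\det\QQ)^{-(N-1)/2}((R_1+R_2)/(2R_1R_2))^{-(N-1)/2}$. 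Solving for $(\mathcal{U}_\delta^1-\mathcal{U}_\delta^2)/\delta$ therefore produces exactly the factors $(\det\QQ)^{(N-1)/2}((R_1+R_2)/(2R_1R_2))^{(N-1)/2}\mathcal{R}_0$ in $C_*$, while the remaining dimensional integral in $y$ (convergent for $N\geq 4$, logarithmically divergent and truncated at scale $1/\sqrt\delta$ for $N=3$, linearly divergent for $N=2$) reproduces the three regimes of $\Phi_N(\delta)$.

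The main obstacle lies in this last step. Because $\Delta_H$ is nonlinear, one cannot use the linear splitting $u_\delta=\mathcal{U}_\delta^1 v_1+\mathcal{U}_\delta^2 v_2+v_0$ that makes the isotropic case clean; instead one needs a quantitative matching of $u_\delta$ with $u_0$ away from $O$ together with sharp control of $u_\delta-\bar u$ in $L^\infty(\nabla)$ within the neck, both delivered by the strict convexity of $H^2$ and by careful cutoff testing in the weak form of \eqref{DH}. Obtaining the matching $(1\pm\tau)$ factors in the upper and lower bounds then follows by choosing inner and outer barriers with slightly perturbed coefficients in the profile $\bar u$ and passing $\tau\to 0$.
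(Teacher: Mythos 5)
Your overall architecture for determining $\mathcal{U}^1_\delta-\mathcal{U}^2_\delta$ — split the flux $\int_{\partial D^1_\delta}H(\nabla u_\delta)\nabla_\xi H(\nabla u_\delta)\cdot\nu$ into a neck part and a far part, send the far part to $\mathcal{R}_0$ via $C^{1,\alpha}$ convergence $u_\delta\to u_0$ on compact subsets of $\Omega_0$, and rescale $y'=\delta^{-1/2}\QQ^{1/2}x'$ to extract $(\det\QQ)^{\frac{N-1}{2}}\left(\frac{R_1+R_2}{2R_1R_2}\right)^{\frac{N-1}{2}}$ and the three regimes of $\Phi_N$ — is the same as the paper's Step 2. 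The genuine gap is in your neck analysis. The central claim $\|\nabla(u_\delta-\bar u)\|_{L^\infty(\Omega_\delta)}\le C$, with $\bar u=\frac{\mathcal{U}^1_\delta+\mathcal{U}^2_\delta}{2}+(\mathcal{U}^1_\delta-\mathcal{U}^2_\delta)\frac{x_N}{h(x')}$, is asserted ("should give") but never proved, and it is precisely the nonlinear analogue of the decomposition $u_\delta=v_\delta+w_\delta$ with $|\nabla w_\delta|$ bounded that the paper explicitly discards as unavailable for the non-linear operator $\Delta_H$. Worse, if your claim were true it would yield the \emph{sharp} leading constant (i.e.\ $\tau=0$), which the paper states is an open problem — so the hardest and most dubious step of your argument is stronger than the theorem itself. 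Concretely: (i) $\bar u$ is not a solution, nor an evident sub/supersolution, of $\Delta_H$; checking the sign of $\Delta_H\bar u=\frac12\partial^2_{\xi_i\xi_j}H^2(\nabla\bar u)\,\partial_{ij}\bar u$ requires controlling $\nabla^2_\xi H^2$ along the rotating direction of $\nabla\bar u$ across the neck, and the transversal error terms are not negligible near $\partial\mathcal{N}_\delta(w)$; (ii) even granting an $L^\infty$ bound on $u_\delta-\bar u$ from comparison, upgrading it to a \emph{gradient} bound inside the neck needs interior estimates at the scale of the neck height $h(x')\sim\delta$, which degenerate as $\delta\to0$.

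The paper avoids all of this by a pointwise barrier construction that never requires matching $u_\delta$ to a profile: at each $P\in\partial D^1_\delta\cap\partial\mathcal{N}_\delta(w)$ it traps $u_\delta$ between the \emph{exact} radial solutions \eqref{v_anello} of $\Delta_H v=0$ in Wulff annuli bounded by balls tangent to $\partial D^1_\delta$ at $P$ from inside and outside; since $u_\delta$ is constant on $\partial D^1_\delta$, its gradient there is normal, and only the normal flux is needed for the integral identity. The $(1\pm\tau)$ loss then arises from the geometric expansions $r_2-r_1=\delta+(1\mp s)\frac{R_1+R_2}{2R_1R_2}\QQ P^\perp\cdot P^\perp+o(\delta^2+|P-P_0|^2)$ of Lemmas \ref{lemma_diff_radii} and \ref{lemma_diff_radii_2}, together with the $P$-function maximum principle of Theorem \ref{MaximumPrinciple} (needed to control the gradient on the lateral boundary $\partial\mathcal{N}^\pm_\delta(w)$, which your "standard Schauder estimates away from $O$" do not reach, since those points approach $O$ as $w\to0$). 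A secondary inaccuracy: your claim $\partial_iH_0(\hat P)=0$ for $i<N$ is false in general — $\hat P$ lies on the $x_N$-axis but the Wulff normal there need not be vertical, which is exactly why the constant $C$ in $C_*$ depends on $\nu_H(\hat P)\cdot\nu(\hat P)$ and why the area element contributes the factor $|\partial_{\xi_N}H_0(P_0)|^{-1}$ in the paper's computation of $\hat I$.
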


We stress that the estimates in Theorem \ref{thm_main_1} are optimal, in the sense that they give the optimal rate of blow up of the gradient as $\delta \to 0$. In the Euclidean case (i.e. when $H(\cdot)=|\cdot|$) we obtain the same rate of blow up as in \cite{BaoLiYin}. We also obtain something more: the estimates in Theorem \ref{thm_main_1} almost provide a complete characterization of the leading term in the blow up. Indeed, one can choose $\tau$ arbitrarily small and get closer and closer to the sharp characterization of the blow up. The reason why we do not obtain the sharp characterization is purely technical, and how to obtain the sharp characterization is an open problem.

The strategy that we use to prove our main result has some remarkable difference compared to the one which is typically used in the Euclidean case. Indeed, in the latter case the usual approach is to use the linearity of the Laplace operator and decompose the solution $u_\delta$ in two parts:
\begin{equation} \label{decomp_linear}
u_\delta = v_\delta + w_\delta \,,
\end{equation}
where $v_\delta$ completely characterizes the asymptotic behavior of the blow-up of the gradient of $u_\delta$ and $|\nabla w_\delta|$ is uniformly bounded independently of $\delta$.

 Since $\Delta_H$ is not linear unless $H$ is an affine transformation of the Euclidean norm, we have to deal with a nonlinear problem and writing $u_\delta$ as in \eqref{decomp_linear} is not helpful. Thus we first prove that the gradient is uniformly bounded away from a \emph{small} neighborhood of the touching point and we prove, in that region, the $C^{1,\alpha}$ convergence of $u_\delta$ to $u_0$ (the solution of \eqref{D0}). Then we find estimates on the gradient in a neighborhood of the touching point and we prove optimal gradient bounds by using comparison principles and a suitable $P$-function. Our approach is purely nonlinear, and we take inspiration from \cite{GorbNovikov} where the authors study the conductivity problem in the Euclidean case for the $p$-Laplacian, with $p>N$. However, due to the presence of anisotropy and since $p \leq N$ in our case, there is some relevant difference between the two problems.
 
\medskip

The paper is organized as follows. In Section \ref{Basic notations and preliminary results} we recall some basic facts about norms in $\R^N$ and about the Finsler (or anisotropic) Laplace operator. Section \ref{sect_P_func} is devoted to prove some maximum principle, and we introduce a $P$-function which is suitable for the problem. In Section \ref{sect_unif_bounds} we prove uniform bounds on the gradient of the solution at points which are far from the touching point. Finally, in Section \ref{sect_thm1} we complete the proof of Theorem \ref{thm_main_1}.  The paper ends with two Appendixes: in the former we prove some standard facts about the perfectly conductivity problem, and in the latter we prove two techinical lemmas which are crucial for the proof of Theorem \ref{thm_main_1}.

\section{Norms and Finsler Laplacian}\label{Basic notations and preliminary results}
\emph{About norms in $\mathbb{R}^N$}. In this section we recall some facts about norms in $\mathbb{R}^N$, $N \geq 2$. Let $H:\mathbb{R}^N \rightarrow \mathbb{R}$ be a norm, i.e.
\begin{align}
& \hspace{-12em} H \,\, \text{is convex,} \label{condizione1}   \\
& \hspace{-12em} H(\xi)\geq 0 \,\, \text{for} \,\,\xi \in \mathbb{R}^N \,\, \text{and} \,\, H(\xi)=0 \,\, \text{if and only if} \,\, \xi=0, \label{condizione2}   \\
& \hspace{-12em} H(t\xi)=|t|H(\xi) \,\, \text{for} \,\, \xi \in \mathbb{R}^N \,\, \text{and} \,\, t \in \mathbb{R}.\label{condizione3} 
\end{align}

Since all norms in $\mathbb{R}^N$ are equivalent, there exist two positive constants $c_1,c_2$ such that
$$
c_1 |\xi| \leq H(\xi) \leq c_2 |\xi| \quad \text{ for any } \xi \in \mathbb{R}^N \,.
$$
The dual norm of $H$, which we denote by $H_0$, is defined by
\begin{equation}\label{normaH0}
  H_0(x)=\sup_{\xi \neq 0} \dfrac{x \cdot \xi}{H(\xi)} \quad \text{for} \quad x\in \mathbb{R}^N \,;
\end{equation}
 analogously, one can define $H$ as the dual norm of $H_0$, i.e.
\begin{equation}\label{normaH}
  H(\xi)=\sup_{x \neq 0} \dfrac{x \cdot \xi}{H_0(\xi)} \quad \text{for} \quad x \in \mathbb{R}^N \,.
\end{equation}
Following our notation, $H_0$ is a norm in the ambient space and gives the norm of a point $x \in \Omega \subset \mathbb{R}^N$ and $H$ is a norm in the dual space, which is identified with $\mathbb{R}^N$.  Indeed, we notice that the gradient of a function $u:\Omega \rightarrow \mathbb{R}^N$, evaluated at $x \in \Omega$, is the element $\nabla u(x)$ of the dual space of $\mathbb{R}^N$, which associates to any vector $y \in \mathbb{R}^N$ the number $y \cdot \nabla u $. Unless otherwise stated, we will use the variable $x$ to denote a point in the ambient space $\mathbb{R}^N$ and $\xi$ for an element in the dual space. The symbols $\nabla$ and $\nabla_{\xi}$ denote the gradients with respect to the $x$ and $\xi$ variables, respectively.

Let $H \in C^1\left(\mathbb{R}^N\setminus\{0\}\right)$, from \eqref{condizione3} we have
\begin{equation} \label{nabla_H_zero_om}
  \nabla_{\xi}H(t \xi)= \text{sign}(t)  \nabla_{\xi}H(\xi), \quad \text{for} \quad \xi \neq 0 \quad \text{and} \quad t\neq 0,
\end{equation}
and\begin{equation}\label{condizionescalare}
 \nabla_{\xi}H(\xi) \cdot \xi  =H(\xi), \quad \text{for} \quad \xi \in \mathbb{R}^N,
\end{equation}
where the left hand side is taken to be $0$ when $\xi=0$. If  $H \in C^2\left(\mathbb{R}^N\setminus\{0\}\right)$, then
\begin{equation}\label{nabla2_H_omog}
  \nabla^2_{\xi}H(t\xi)=\dfrac{1}{|t|}\nabla^2_{\xi}H(\xi), \quad \text{for} \quad \xi \neq 0 \quad \text{and} \quad t\neq 0 \,,
\end{equation}
where $\nabla^2_{\xi}$ is the Hessian operator with respect to the $\xi$ variable; we also notice that
\begin{equation}\label{nabla2_H2_omog}
  \nabla^2_{\xi}H^2(t\xi)=\nabla^2_{\xi}H^2(\xi), \quad \text{for} \quad \xi \neq 0 \quad \text{and} \quad t\neq 0 \,.
\end{equation}
Hence, \eqref{condizionescalare} implies that
\begin{equation}\label{H=0}
   \partial^2_{\xi_{i}\xi_{k}}H (\xi) \xi_{i}=0,
\end{equation}
for every $k=1, \ldots,N$.

 The following properties hold provided that $H \in C^1\left(\mathbb{R}^N\setminus\{0\}\right)$ and the unitary ball $\{\xi \in \mathbb{R}^n:\ H(\xi) < 1 \}$ is strictly convex (see \cite[Lemma 3.1]{CianchiSalani}):
\begin{equation}\label{LemmaCianchiSalani1}
 H_0\left(\nabla_{\xi}H(\xi)\right)=1, \quad \text{for} \quad \xi \in \mathbb{R}^N\setminus\{0\},
\end{equation}
and
\begin{equation}\label{LemmaCianchiSalani11}
  H\left(\nabla H_0(x)\right)=1, \quad \text{for} \quad x \in \mathbb{R}^N\setminus\{0\};
\end{equation}
furthermore, the map $H \nabla_{\xi}H$ is invertible with
\begin{equation}\label{invertibile}
  H \nabla_{\xi}H=\left(H_0 \nabla H_0\right)^{-1} \,.
\end{equation}

For $\xi_0 \in \mathbb{R}^N$ and $r>0$, the ball of center $\xi_0$ and radius $r$ in the norm $H$ is denoted by
\begin{equation*}
B_H(\xi_0,r)=\{\xi \in \mathbb{R}^N :H(\xi-\xi_0)<r\};
\end{equation*}
analgously,
\begin{equation*}
  B_{H_{0}}(x_0,r)=\{x \in \mathbb{R}^N :H_0(x-x_0)<r\}
\end{equation*}
denotes the ball of center $x_0$ and radius $r$ in the norm $H_0$. A ball in the norm $H_0$ is called the \emph{Wulff shape} of $H$.

\medskip

\emph{Assumptions on $H$}. We shall consider norms such that the unitary balls are uniformly convex. More precisely, we are considering a uniformly elliptic norm of class $C^3$ outside the origin, i.e. a function $H \in C^{3}(\mathbb{R}^N \setminus \{O\})$ for which there exists $\lambda_*, \lambda^* > 0$ such that
\begin{equation} \label{norm_unif_ellip}
 \frac{\lambda_*}{|v|} \Bigg{|} \tau - \left( \tau \cdot \frac{v}{|v|} \right) \frac{v}{|v|} \Bigg{|}^2 \leq \langle \nabla^2 H (v) \tau , \tau \rangle \leq \frac{\lambda^*}{|v|} \Bigg{|} \tau - \left( \tau \cdot \frac{v}{|v|} \right) \frac{v}{|v|} \Bigg{|}^2 \,,
\end{equation}
for every $v,\tau \in \mathbb{R}^N$, $v \neq 0$. We recall that, under these hypotheses, the boundary of the Wulff shape is uniformly convex (see, for instance, \cite{Schneider} p.111).

\medskip

\emph{Finsler Laplacian}. The Finsler Laplacian (associated to $H$) of the function $u$ is given by
\begin{equation*}
\triangle_H u=\text{div}\left(H\left(\nabla u\right)\nabla_{\xi}H\left(\nabla u\right)\right).
\end{equation*}

We recall the maximum and comparison principles for the Finsler Laplacian (see \cite[Theorem 4.1]{FeroneKawohl} and \cite[Theorem 4.2]{FeroneKawohl}).

\begin{theorem}\label{MaximumPrinciple}
If $-\triangle_H u \leq 0$ in $\Omega$ and $u = g \leq M$ on $\partial \Omega$, then $u$ attains its
maximum on the boundary; that is, $u(x) \leq M$ a.e. in $\Omega$.
\end{theorem}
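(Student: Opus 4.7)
The plan is to prove the comparison-type maximum principle by testing the weak inequality $-\triangle_H u \leq 0$ against the positive part of $u-M$ and exploiting the Euler identity \eqref{condizionescalare}.

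First I would rewrite the hypothesis in weak form: $u \in W^{1,2}(\Omega)$ satisfies
\begin{equation*}
\int_\Omega H(\nabla u)\, \nabla_\xi H(\nabla u) \cdot \nabla \phi \, dx \leq 0
\end{equation*}
for every non-negative $\phi \in C_0^1(\Omega)$, and by density for every non-negative $\phi \in W_0^{1,2}(\Omega)$. The boundary condition $u \leq M$ on $\partial \Omega$, interpreted in the trace sense, guarantees that $\phi := (u-M)_+$ belongs to $W_0^{1,2}(\Omega)$ and is non-negative, so it is an admissible test function.

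Next I would substitute this choice of $\phi$. On the set $\{u \leq M\}$ we have $\nabla \phi = 0$, while on $\{u > M\}$ we have $\nabla \phi = \nabla u$. Using the Euler-type identity $\nabla_\xi H(\xi) \cdot \xi = H(\xi)$ from \eqref{condizionescalare}, the weak inequality reduces to
\begin{equation*}
\int_{\{u > M\}} H(\nabla u)^2 \, dx \leq 0.
\end{equation*}
Since $H$ is a norm and $H^2 \geq 0$, this forces $H(\nabla u) = 0$ almost everywhere on $\{u > M\}$, and then \eqref{condizione2} gives $\nabla u = 0$ almost everywhere on this set. Equivalently, $\nabla (u-M)_+ = 0$ a.e.\ in $\Omega$, so $(u-M)_+$ is constant on each connected component of $\Omega$. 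Because $(u-M)_+ \in W_0^{1,2}(\Omega)$ has zero trace on $\partial \Omega$, that constant must be zero on every component that touches the boundary, yielding $u \leq M$ a.e.\ in $\Omega$.

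The only delicate point is the admissibility of $(u-M)_+$ as a test function: one must know that $u \in W^{1,2}(\Omega)$ has a well-defined trace, that $(u-M)_+$ inherits zero boundary values from the assumption $u = g \leq M$ on $\partial \Omega$, and that the chain rule $\nabla (u-M)_+ = \chi_{\{u>M\}} \nabla u$ holds in the Sobolev sense (which is Stampacchia's lemma). Once these standard ingredients are in place the argument is purely algebraic, relying only on the $1$-homogeneity of $H$ and non-negativity of $H^2$; in particular, no linearity of $\triangle_H$ is used, which is why the same scheme will also deliver a comparison principle between a sub- and a supersolution in later sections.
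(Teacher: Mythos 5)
Your argument is correct. Note, however, that the paper does not prove this statement at all: it simply recalls it from \cite{FeroneKawohl} (Theorem 4.1 there), and the proof you give --- testing the weak inequality against $(u-M)_+\in W_0^{1,2}(\Omega)$, invoking the Euler identity \eqref{condizionescalare} to reduce the integrand to $H(\nabla u)^2\geq 0$ on $\{u>M\}$, and concluding via the zero trace (or, more directly, via Poincar\'e's inequality applied to $(u-M)_+$, which has vanishing gradient a.e.) --- is precisely the standard argument used in that reference. The one convention worth making explicit is that $H(\nabla u)\nabla_\xi H(\nabla u)$ is taken to be $0$ where $\nabla u=0$ (as the paper does after \eqref{condizionescalare}), so the integrand is well defined there; with that, your proof is complete and self-contained.
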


\begin{theorem}\label{ComparisionPrinciple}
Suppose that $-\triangle_H u \leq -\triangle_H v$ in $\Omega$ and $u \leq v$ on $\partial \Omega$. Then $u\leq v$ a.e. in $\Omega$.
\end{theorem}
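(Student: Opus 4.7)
The plan is to adapt the standard variational comparison argument for quasilinear equations arising from strictly convex Lagrangians: test the difference of the two weak inequalities against $(u-v)^+$ and exploit the strict monotonicity of the vector field $\xi\mapsto\nabla_\xi(H^2)(\xi)$ to conclude that $(u-v)^+\equiv 0$.

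First I would write out both hypotheses in weak form. The inequality $-\Delta_H u \leq -\Delta_H v$ in $\Omega$ means that for every non-negative $\phi \in W^{1,2}_0(\Omega)$,
\begin{equation*}
\int_\Omega \bigl(H(\nabla u)\nabla_\xi H(\nabla u) - H(\nabla v)\nabla_\xi H(\nabla v)\bigr)\cdot\nabla\phi\,dx \leq 0.
\end{equation*}
Since $u\leq v$ on $\partial\Omega$, the function $\phi:=(u-v)^+$ lies in $W^{1,2}_0(\Omega)$ and is admissible. With this choice $\nabla\phi = \nabla(u-v)\mathbf{1}_{\{u>v\}}$ a.e., and the inequality reduces to
\begin{equation*}
\int_{\{u>v\}} \bigl(H(\nabla u)\nabla_\xi H(\nabla u) - H(\nabla v)\nabla_\xi H(\nabla v)\bigr)\cdot\nabla(u-v)\,dx \leq 0.
\end{equation*}

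Next I would rewrite the integrand using the identity $H(\xi)\nabla_\xi H(\xi) = \tfrac{1}{2}\nabla_\xi(H^2)(\xi)$, which extends continuously to $\xi=0$ because $H^2\in C^1(\R^N)$. The standing assumption that $H^2$ is strictly convex translates, by a standard fact of convex analysis, into strict monotonicity of its gradient:
\begin{equation*}
\bigl(\nabla_\xi(H^2)(\xi_1)-\nabla_\xi(H^2)(\xi_2)\bigr)\cdot(\xi_1-\xi_2)\geq 0,
\end{equation*}
with equality if and only if $\xi_1=\xi_2$. Applied pointwise with $\xi_1=\nabla u(x)$ and $\xi_2=\nabla v(x)$, this shows the integrand in the previous display is non-negative a.e.\ on $\{u>v\}$.

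Combined with the $\leq 0$ upper bound, the integrand must vanish a.e., so by the equality case of strict monotonicity $\nabla u=\nabla v$ a.e.\ on $\{u>v\}$. Therefore $\nabla(u-v)^+= 0$ a.e.\ in $\Omega$, whence $(u-v)^+$ is constant on each connected component of $\Omega$; since it has vanishing trace on $\partial\Omega$ it is identically zero, giving $u\leq v$ a.e. The only delicate step is the passage from strict convexity of $H^2$ to strict monotonicity of $\nabla_\xi(H^2)$: on $\R^N\setminus\{O\}$ this is an immediate consequence of the uniform ellipticity hypothesis \eqref{norm_unif_ellip}, while at the origin $2$-homogeneity combined with strict convexity of $H^2$ closes the case. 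Everything else is routine quasilinear test-function manipulation, so the real work is geometric (encoded in the hypotheses on $H$) rather than analytic.
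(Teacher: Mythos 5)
Your argument is correct, and it is essentially the canonical one: the paper does not prove this statement itself but quotes it from \cite[Theorem 4.2]{FeroneKawohl}, where the proof is precisely this test of the weak inequality against $(u-v)^+$ combined with the (strict) monotonicity of $\xi\mapsto H(\xi)\nabla_\xi H(\xi)=\tfrac12\nabla_\xi(H^2)(\xi)$ coming from strict convexity of $H^2$. Your closing remark about the origin is fine but unnecessary: strict convexity of $H^2$ together with its $C^1$ regularity on all of $\R^N$ already yields strict monotonicity of the gradient everywhere, including at $\xi=0$.
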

Let $\BHO(r)$ and $\BHO(R)$ be two Wulff shapes centered at the origin, with $r<R$. It will be useful to have at hand the explicit solution to the problem
\begin{equation} \label{pb_v_anello}
\left\{
   \begin{array}{ll}
    \Delta_H v = 0& \hbox{in} \,\,\, \BHO(R) \setminus \overline{\BHO(r)}, \\
       v=C_r & \hbox{on} \,\,\, \partial \BHO(r),\\
v=C_R & \hbox{on} \,\,\, \partial \BHO(R),
    \end{array}
\right.
\end{equation}
which is given by
\begin{equation} \label{v_anello}
v(x)=\left\{
   \begin{array}{ll}
    (C_r-C_R)\dfrac{H_0(x)^{2-N}-R^{2-N}}{r^{2-N}-R^{2-N}}+C_R& \hbox{if} \,\,\, N\geq 3, \\
\\
      (C_r-C_R)\dfrac{\ln \left(R^{-1} H_0(x)\right)}{\ln \left(R^{-1}r \right)}+C_R& \hbox{if} \,\,\, N=2, \\
    \end{array}
\right.
\end{equation}
for any $x \in \overline{\BHO(R)} \setminus \BHO(r)$. It is readily seen that $v \in C^3( \overline{\BHO(R)} \setminus \BHO(r))$, it satisfies \eqref{pb_v_anello}, and $H(\nabla v) \neq 0$ (see also \cite[Theorem 3.1]{FeroneKawohl}). Moreover, the following bounds
\begin{equation} \label{v_grad_bound_N3}
\frac{(N-2)|C_r-C_R| }{[(r/R)^{2-N}-1]R} \leq H(\nabla v (x)) \leq  \frac{(N-2)|C_r-C_R|}{[(r/R)^{2-N}-1]r} \, \quad \text{ for } N\geq 3\,,
\end{equation}
and
\begin{equation} \label{v_grad_bound_N2}
\frac{|C_r-C_R| }{R\ln \left(Rr^{-1} \right)} \leq H(\nabla v (x)) \leq  \frac{|C_r-C_R|}{r\ln \left(Rr^{-1} \right)} \, \quad \text{ for } N=2\,,
\end{equation}
hold for any $x \in  \overline{\BHO(R)} \setminus \BHO(r)$.

\medskip

\emph{Definition of the neck}. It will be useful to introduce the following notation. For a fixed $w>0$ sufficiently small we define the \emph{neck} of width $w$ as the set
\begin{equation}\label{neck}
  \mathcal{N}_{\delta}(w)=\{x=(x',x_N) \in \Omega_{\delta}\,\, \text{such that} \,\, |\QQ^{\frac{1}{2}}x'|<w, H_{0}(x)<r \},
\end{equation}
where $\QQ^{\frac{1}{2}}$ is the square root of the matrix $\QQ$ defined in \eqref{matriceQ}. Notice that, if $w$ is small enough, $\mathcal{N}_{\delta}(w)$ is as in Fig. \ref{FiguraConNeck}.
\begin{center}
\begin{figure}[h]
\includegraphics[scale=0.4]{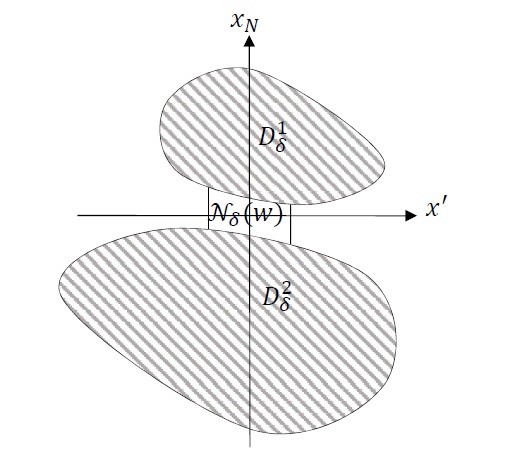}
\caption{The neck $\mathcal{N}_\delta(w)$. \label{FiguraConNeck}}
\end{figure}
\end{center}
\begin{remark}\label{tangent}
Let $B_{H_0}^1$ and $B_{H_0}^2$ be two anisotropic balls which are tangent to some point. Let $x^1 $ and $x^2$ be the centers of $B_{H_0}^1$ and $B_{H_0}^2$, respectively. Then the touching point lies on the segment joining the two centers $x^1$ and $x^2$.

Indeed, since
$$
B_{H_0}^1=\{y : H_0(y-x^1)<r_1\} \,,
$$
where $r_1$ is the radius of $B_{H_0}^1$, then
$$
\nabla H_0(x-x^1) = \gamma_1 \nu(x)\,,
$$
for some $\gamma_1>0$. We apply $H$ to both sides of the above equation and from \eqref{LemmaCianchiSalani11} we find
$$
1 = \gamma_1 H(\nu(x))\,,
$$
which yields
$$
\nabla H_0(x-x^1) = \frac{\nu(x)}{H(\nu(x))} \,.
$$
An analogous argument shows that
$$
\nabla H_0(x^2-x) = \frac{\nu(x)}{H(\nu(x))} \,.
$$
We apply $\nabla H$ in the last two equations and, by using the properties of the norms, we find
$$
\frac{x-x^1}{H_0(x-x^1)} = \nabla H(\nu(x)) = \frac{x^2-x}{H_0(x-x^2)} \,,
$$
as claimed.
\end{remark}

\section{Maximum principles} \label{sect_P_func}
In this section we prove some maximum principles for $u_\delta$, $H(\nabla u)$ and for a $P$-function which is suitable for our purposes. 

We first notice that the maximum and minimum of $u_\delta$ are attained at the boundary of $\Omega$.
\begin{lemma} \label{lemma_maxmin}
Let $u_{\delta}$ the solution of problem \eqref{DH}. The maximum and the minimum of $u_{\delta}$ are attained on $\partial \Omega$. In particular, we have that
$$
\max_{\overline{\Omega}_\delta} |u_\delta| =  \max_{\partial \Omega} |\varphi| \,.
$$
\end{lemma}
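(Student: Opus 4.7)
The plan is to exploit the variational characterization \eqref{energia} together with a standard truncation argument, since a direct comparison with constants via Theorem \ref{ComparisionPrinciple} on $\Omega_\delta$ is obstructed by the fact that the values $\mathcal{U}^i_\delta$ attained on $\partial D^i_\delta$ are a priori unknown.

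Set $M^+ := \max_{\partial\Omega}\varphi$ and $M^- := \min_{\partial\Omega}\varphi$, and define the truncation
\[
\tilde u_\delta(x) := \max\bigl\{\min\{u_\delta(x),\,M^+\},\,M^-\bigr\},\qquad x\in\overline{\Omega}.
\]
The first step is to verify that $\tilde u_\delta$ is admissible for \eqref{energia}. Since $M^-\le\varphi\le M^+$ on $\partial\Omega$, we have $\tilde u_\delta=\varphi$ on $\partial\Omega$. On each inclusion $D^i_\delta$, $u_\delta\equiv \mathcal{U}^i_\delta$ by \eqref{potenziale}, so $\tilde u_\delta$ is also constant on $D^i_\delta$ (it equals either $\mathcal{U}^i_\delta$, $M^+$ or $M^-$), and consequently $H(\nabla\tilde u_\delta)=0$ in $D^i_\delta$. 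Standard Stampacchia truncation yields $\tilde u_\delta\in W^{1,2}(\Omega)$ with $\nabla\tilde u_\delta=\nabla u_\delta\,\chi_{\{M^-<u_\delta<M^+\}}$ a.e., hence $H(\nabla\tilde u_\delta)\le H(\nabla u_\delta)$ pointwise.

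Integrating the last pointwise inequality squared gives
\[
\frac12\int_\Omega H(\nabla\tilde u_\delta)^2\,dx\ \le\ \frac12\int_\Omega H(\nabla u_\delta)^2\,dx,
\]
so $\tilde u_\delta$ is an admissible competitor with energy at most that of $u_\delta$. Because $u_\delta$ is the unique minimizer of the functional in \eqref{energia} (uniqueness coming from strict convexity of $H^2$, as recalled just before \eqref{DH}), this forces $\tilde u_\delta\equiv u_\delta$ in $\overline\Omega$. Therefore $M^-\le u_\delta\le M^+$ a.e. in $\Omega$, and by continuity of $u_\delta$ on $\overline{\Omega}_\delta$ the inequality holds everywhere. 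Since $u_\delta=\varphi$ on $\partial\Omega$ realizes both bounds, we conclude
\[
\max_{\overline{\Omega}_\delta}|u_\delta|\ =\ \max_{\partial\Omega}|\varphi|,
\]
and in particular both the maximum and minimum of $u_\delta$ are attained on $\partial\Omega$.

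The only delicate point is the admissibility check on the inclusions: one must be sure that truncation preserves the constraint $H(\nabla v)=0$ in $D^i_\delta$, which is why working with $\tilde u_\delta$ as the max/min truncation (rather than a subharmonic-style comparison on $\Omega_\delta$ alone) is essential. Once admissibility is in hand, the conclusion is a direct consequence of uniqueness for the variational problem \eqref{energia}, with no recourse needed to the interior PDE $\Delta_H u_\delta=0$ or to the flux conditions on $\partial D^i_\delta$.
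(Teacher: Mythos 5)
Your proof is correct, but it takes a genuinely different route from the paper. The paper argues entirely on the PDE side: it first invokes the maximum principle for $\Delta_H$ on $\Omega_\delta$ to place the extrema of $u_\delta$ on $\partial\Omega_\delta$, and then excludes the inclusion boundaries by contradiction via Hopf's lemma — if the maximum were attained on $\partial D_\delta^1$, the normal derivative would have a strict sign there, making the flux $\int_{\partial D_\delta^1} H(\nabla u_\delta)\nabla_\xi H(\nabla u_\delta)\cdot\nu$ nonzero and contradicting the third condition in \eqref{DH}. You instead work purely variationally: the truncation $\tilde u_\delta$ at the levels $\min_{\partial\Omega}\varphi$ and $\max_{\partial\Omega}\varphi$ remains admissible for \eqref{energia} (it keeps the boundary datum and is still constant on each $D_\delta^i$ by \eqref{potenziale}), does not increase the energy, and hence must coincide with $u_\delta$ by uniqueness of the minimizer. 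Your argument buys some robustness: it needs neither the flux conditions nor Hopf's lemma (the latter being somewhat delicate for the degenerate quasilinear operator $\Delta_H$, as it requires boundary regularity and non-degeneracy of the gradient), and it identifies the solution of \eqref{DH} with the minimizer of \eqref{energia}, which the paper anyway establishes in Appendix \ref{appendix1}. The paper's argument is shorter given the maximum principle already stated, and its Hopf-type reasoning foreshadows the barrier constructions used later. One small point worth making explicit in your write-up is that equality of energies forces $\tilde u_\delta$ to be a minimizer, so uniqueness (from strict convexity of $H^2$ together with the fixed trace on $\partial\Omega$) is what yields $\tilde u_\delta\equiv u_\delta$; as you state it, this is implicit but correct.
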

\begin{proof}
The maximum principle for the Finsler Laplacian yields that $|u_\delta|$ attains its maximum $\partial \Omega_\delta$. We show that the maximum of $u_\delta$ can not be attained at $\partial D_\delta^i$, with $i \in \{1,2\}$. Indeed, assume by contradiction that $\max u_{\delta}=T_1$. From Hopf's lemma we have that $|\nabla u_{\delta}|>0$ on $\partial D_\delta^1$, which contradicts the third condition of \eqref{DH}. Analogously, the maximum can not be attained at $\partial D_\delta^2$.
\end{proof}

Before giving other maximum principles, we set some notation and prove some basic inequalities for the Finsler Laplacian.
In order to avoid heavy formulas, we use the following notation:
$$
u_i=\dfrac{\partial u}{\partial x_i} \,,\quad  u_{ij}=\dfrac{\partial^2 u}{\partial x_i \partial x_j} \,.
$$
and
\begin{equation*}
  \partial_{\xi_i}H=\dfrac{\partial H}{\partial \xi_i}, \quad \partial^2_{\xi_i \xi_j}H=\dfrac{\partial^2 H}{\partial \xi_i \partial \xi_j}.
\end{equation*}
Since
\begin{equation*}
  \text{div} \left(H(\nabla u)\nabla_{\xi} H(\nabla u)\right) = \left( \partial_{\xi_{i}}H(\nabla u)\partial_{\xi_{j}}H(\nabla u)+ H(\nabla u)\partial_{\xi_{i} \xi_{j}}^2H(\nabla u)\right)u_{ij}
\end{equation*}
where $\nabla u \neq 0$, by setting
\begin{equation}\label{aij}
  a_{ij}:= \partial_{\xi_{i}}H(\nabla u)\partial_{\xi_{j}}H(\nabla u)+ H(\nabla u)\partial_{\xi_{i} \xi_{j}}^2H(\nabla u)= \dfrac{1}{2} \partial_{\xi_{i} \xi_{j}}^2 H(\nabla u)^2 \,.
\end{equation}
the Finsler Laplacian can be written as
\begin{equation}\label{finsleraijuj}
  \Delta_H u= a_{ij}u_{ij}= {\rm tr}(A \nabla^2 u) 
\end{equation}
at points where $\nabla u \neq 0$, where $A$ is the symmetric matrix with entries $a_{ij}$, $i,j=1,\ldots,N$. We notice that from \eqref{condizionescalare} and \eqref{H=0} we have that
\begin{equation} \label{nappa0}
a_{ij}u_i u_j = H(\nabla u)^2
\end{equation}
at points where $\nabla u \neq 0$.
It will be useful to set
\begin{equation}\label{aijk}
  a_{ijk}:=\frac12 \partial_{\xi_{i} \xi_{j} \xi_{k}}^3 H(\nabla u)^2\,,
\end{equation}
and notice that if $u$ is a solution to $\Delta_H u =0$ then
\begin{equation} \label{aijl_ul_zero}
a_{ijl} u_l = 0 ,
\end{equation}
where $\nabla u \neq 0$. Indeed, \eqref{aijl_ul_zero} can be proved by noticing that
\begin{multline*}
a_{ijl} u_l   = \partial_{\xi_{j}}H(\nabla u) \partial^2_{{\xi_{i}}{\xi_{l}}}H(\nabla u)u_l+
\partial_{\xi_{i}}H(\nabla u)\partial^2_{{\xi_{j}}{\xi_{l}}}H(\nabla u)u_l  \\
+ \partial^2_{{\xi_{i}}{\xi_{j}}}H(\nabla u) \partial_{\xi_{l}}H(\nabla u)u_l + H(\nabla u)\partial^3_{{\xi_{i}}{\xi_{j}}{\xi_{l}}} H(\nabla u) u_l \,.
\end{multline*}
By using \eqref{H=0} we obtain
\begin{equation*}
a_{ijl} u_l = H(\nabla u) \partial^2_{{\xi_{i}}{\xi_{j}}}H(\nabla u) + H(\nabla u)\partial^3_{{\xi_{i}}{\xi_{j}}{\xi_{l}}} H(\nabla u) u_l = H(\xi) \partial_{\xi_i} \left( \partial^2_{{\xi_{j}}{\xi_{l}}}H(\xi) \xi_l\right)_{|_{\xi = \nabla u}},
\end{equation*}
and from \eqref{H=0} we find \eqref{aijl_ul_zero}.

Since $\nabla_\xi^2 H^2(\xi)$ is $0$-homogeneous, the matrix $A=(a_{ij})$ satisfies
$$
|a_{ij}| \leq C_0 \quad \text{ and } \quad 2\mu_1 Id \leq A \leq 2\mu_N Id \,,
$$
where $C_0$ depends only on $\|\nabla^2 H^2\|_{C^0(\partial B_H(0,1))}$, and where $\mu_1$ and $\mu_N$ are the minimum and maximum eigenvalues of $\nabla^2_\xi H^2 $.

Let $\mathcal L$ to be the second order elliptic operator given by
\begin{equation}\label{OperatoreL}
  \mathcal{L} v := \partial_i (a_{ij} v_j) =a_{ij}v_{ij}+a_{ijl}u_{ij}v_l \,.
\end{equation}

  We recall that if $u$ is a solution to $\Delta_Hu=0$, then $u$ is of class $C^{1,\alpha}\cap W^{1,2}_{loc}$ and, by elliptic regularity, $u \in C^{2,\alpha}$ where $\nabla u \neq 0$.

  We first prove that if $u$ is a solution to $\Delta_H u=0$ then $H(\nabla u)$ satisfies a maximum principle and we also give a useful pointwise formula for $\mathcal{L}u^2$.

\begin{lemma}\label{lemma_MaxPrinc_Du_u}
Let $E \subset \R^N$ be a bounded domain and let $u$ be such that $\Delta_Hu=0$ in $E$. We have
\begin{equation} \label{P_1}
\mathcal{L} (H(\nabla u)^2) \geq \frac{2n}{n-1}\left( \partial_{\xi_i} H(\nabla u) \partial_{\xi_j} H(\nabla u)  u_{ij}\right)^2 \,,
\end{equation}
and
\begin{equation} \label{P_2}
\mathcal{L} (u^2) = 2 H(\nabla u)^2   \,.
\end{equation}
In particular, $H(\nabla u)$ satisfies the maximum principle, i.e.
\begin{equation} \label{maxprinc}
\max_{E} H(\nabla u) = \max_{\partial E} H(\nabla u)  \,.
\end{equation}
\end{lemma}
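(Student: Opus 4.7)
My plan is to prove the three assertions in order: first the simpler identity \eqref{P_2}, then the Bochner-type inequality \eqref{P_1}, and finally deduce the maximum principle \eqref{maxprinc} from \eqref{P_1}.

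\textbf{Proof of \eqref{P_2}.} I work pointwise on the open set $\{\nabla u \neq 0\}$, where $u$ is $C^{2,\alpha}$ by standard regularity. Expanding,
\[
\mathcal{L}(u^2) = \partial_i\bigl(a_{ij}(u^2)_j\bigr) = 2\partial_i(a_{ij} u u_j) = 2 a_{ij} u_i u_j + 2 u\,\partial_i(a_{ij}u_j).
\]
Since $\partial_i a_{ij} = a_{ijl}u_{il}$, we have $\partial_i(a_{ij}u_j) = a_{ij}u_{ij} + a_{ijl}u_{il}u_j$. The first term is $\Delta_H u = 0$; the second vanishes by the identity $a_{ijl}u_l = 0$ in \eqref{aijl_ul_zero}. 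Combined with $a_{ij}u_i u_j = H(\nabla u)^2$ from \eqref{nappa0}, this yields \eqref{P_2}.

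\textbf{Proof of \eqref{P_1}.} Set $Q := H(\nabla u)^2$ and $b_l := \partial_{\xi_l}H^2(\nabla u)$, so that $Q_k = b_l u_{lk}$ and $Q_{ij} = 2 a_{lm} u_{mj} u_{li} + b_l u_{lij}$. Differentiating $a_{ij}u_{ij}\equiv 0$ with respect to $x_k$ gives $a_{ij}u_{ijk} = -a_{ijl} u_{lk} u_{ij}$. Substituting and using symmetry of $a_{ijl}$ and $u_{ij}$,
\[
\mathcal{L}Q = a_{ij}Q_{ij} + a_{ijl}u_{ij}Q_l = 2 a_{ij}a_{lm}u_{mj}u_{li} - b_l a_{ijm}u_{ml}u_{ij} + a_{ijl}u_{ij}b_m u_{ml}.
\]
After renaming dummy indices, the last two terms cancel exactly, leaving
\[
\mathcal{L}(H(\nabla u)^2) = 2\, a_{ij}a_{lm}u_{mj}u_{li} = 2\,{\rm tr}\bigl((AD^2 u)^2\bigr),
\]
where $A=(a_{ij})$. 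Writing $A=P^2$ with $P$ symmetric positive definite and setting $B := P D^2 u P$, we get ${\rm tr}((AD^2u)^2)={\rm tr}(B^2)$ and ${\rm tr}(B) = {\rm tr}(AD^2u) = \Delta_H u = 0$. Hence the eigenvalues $\lambda_1,\dots,\lambda_N$ of $B$ sum to zero, and by Cauchy--Schwarz
\[
{\rm tr}(B^2) = \sum_i \lambda_i^2 \geq \frac{N}{N-1}\,\bigl(\hat w^{T} B \hat w\bigr)^2 \quad \text{for any unit vector } \hat w.
\]
The crux is identifying the right vector. Setting $\eta := \nabla_\xi H(\nabla u)$, Euler's identity applied to the $1$-homogeneous map $\nabla_\xi H^2$ gives $A \nabla u = H(\nabla u)\eta$, so $A^{-1}\eta = \nabla u / H(\nabla u)$ and
\[
\eta^{T} A^{-1}\eta = \frac{\eta \cdot \nabla u}{H(\nabla u)} = 1
\]
by \eqref{condizionescalare}. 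Thus $w := P^{-1}\eta$ is a unit vector, and $w^{T} B w = \eta^{T} D^2 u\, \eta = \partial_{\xi_i}H(\nabla u)\partial_{\xi_j}H(\nabla u)\,u_{ij}$. Plugging $\hat w = w$ into the Cauchy--Schwarz bound yields \eqref{P_1}.

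\textbf{Proof of the maximum principle \eqref{maxprinc}.} On the open set $U := \{x \in E : \nabla u(x) \neq 0\}$, the coefficients $a_{ij}$ are of class $C^1$, and $\mathcal{L}$ is a uniformly elliptic operator in nondivergence-equivalent form. By \eqref{P_1}, $\mathcal{L}(H(\nabla u)^2) \geq 0$ on $U$, so the classical weak maximum principle applies to the subsolution $H(\nabla u)^2$ on each connected component of $U$. If $\max_{\overline E} H(\nabla u) = 0$ the statement is trivial; otherwise any maximizer $x_0$ satisfies $H(\nabla u(x_0))>0$, so $x_0 \in U$. The connected component of $U$ containing $x_0$ must intersect $\partial E$, because at any boundary point of that component inside $E$ one would have $\nabla u = 0$ and hence $H(\nabla u) = 0 < \max H(\nabla u)$, contradicting the weak maximum principle applied on that component. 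This forces the maximum to be attained on $\partial E$, proving \eqref{maxprinc}.

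The main technical obstacle is the Bochner computation for \eqref{P_1}: executing the cancellation of the third-derivative terms and, above all, recognizing the algebraic identity $A\nabla u = H(\nabla u)\,\nabla_\xi H(\nabla u)$, which is what makes the natural candidate vector $P^{-1}\eta$ have unit length and thereby removes any extraneous factor in the Cauchy--Schwarz estimate.
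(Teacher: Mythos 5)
Your proof is correct and follows the same overall architecture as the paper's: the identity \eqref{P_2} via $a_{ij}u_iu_j=H(\nabla u)^2$ and $a_{ijl}u_l=0$, the Bochner identity $\mathcal{L}(H(\nabla u)^2)=2a_{ij}a_{kl}u_{ik}u_{jl}$ after the third-derivative terms cancel, and the maximum principle by working on the open set where $\nabla u\neq 0$. The one substantive difference is in the refined Cauchy--Schwarz step: the paper invokes Lemma 2.3 of Wang--Xia, which gives the general inequality
\begin{equation*}
a_{ij}a_{kl}u_{ik}u_{jl}\ \geq\ \frac{(a_{ij}u_{ij})^2}{n}+\frac{n}{n-1}\Bigl(\frac{a_{ij}u_{ij}}{n}-\partial_{\xi_i}H\,\partial_{\xi_j}H\,u_{ij}\Bigr)^2,
\end{equation*}
and then sets $a_{ij}u_{ij}=0$, whereas you prove the needed trace-free case directly: writing $A=P^2$, $B=PD^2uP$, observing $\mathrm{tr}(B)=0$, and checking that $P^{-1}\nabla_\xi H(\nabla u)$ is a unit vector via $A\nabla u=H(\nabla u)\nabla_\xi H(\nabla u)$ and Euler's identity. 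This self-contained argument is a genuine (and clean) replacement for the citation; what it gives up is only the sharper general form of the inequality, which is not needed here. Your component-by-component justification of the maximum principle on $\{\nabla u\neq 0\}$ is also slightly more careful than the paper's, which simply asserts that the maximum is attained on $\partial E\cup\partial E_0$; both arguments share the same (unaddressed, but standard) caveat that the drift coefficients $a_{ijl}u_{ij}$ of $\mathcal{L}$ need not stay bounded near the critical set, which can be handled by exhausting each component with compactly contained subdomains.
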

\begin{proof}
We first prove \eqref{P_2}. At points where $\nabla u \neq 0$ we have
$$
\text{div} \left(a_{ij} \nabla u^2\right) = 2a_{ij} u_i u_j + 2u a_{ij} u_{ij}+ 2u \partial_i(a_{ij}) u_j
$$
and, since $ a_{ij} u_{ij} = \Delta_H u=0$, we obtain
$$
\text{div} \left(a_{ij} \nabla u^2\right) = 2a_{ij} u_i u_j + 2u u_j a_{ijk} u_{ki}  \,.
$$
From \eqref{nappa0} and \eqref{aijl_ul_zero} we find
\begin{equation} \label{nappa2010}
\text{div} \left(a_{ij} \nabla u^2\right)  = 2 H(\nabla u)^2  \,,
\end{equation}
at points where $\nabla u \neq 0$. By continuity, \eqref{nappa2010} can be extended to zero where $\nabla u =0$.

In order to prove \eqref{P_1}, we first notice that the following Bochner formula holds (see also \cite[Lemma 2.1]{WangXia}):
\begin{equation} \label{Bochner}
a_{ij}\partial^2_{ij} H(\nabla u)^2 = 2 a_{ij}a_{kl}u_{ik}u_{jl} - \partial_{l}H(\nabla u)^2a_{ijl}u_{ij} \,,
\end{equation}
where $\nabla u \neq 0$. Indeed, \eqref{Bochner} follows from the following argument.  Owing to \eqref{aij} and \eqref{finsleraijuj} and since $\Delta_Hu=0$ we have
\begin{eqnarray*}
a_{ij} \partial_{ij}^2 \left(H(\nabla u)^2\right)&=& a_{ij} \partial_{j}\left(\partial_{\xi_{k}} H(\nabla u)^2 u_{ik}\right)=a_{ij} \underbrace{\partial^2_{\xi_{k}\xi_{l}} H(\nabla u)^2}_{2a_{kl}} u_{ik}u_{jl}+a_{ij} \partial_{\xi_{k}} H(\nabla u)^2 u_{ijk}\\
&=&2a_{ij}a_{kl}u_{ik}u_{jl}+ \partial_{\xi_{k}} H(\nabla u)^2\partial_{k}(\underbrace{a_{ij}u_{ij}}_{\Delta_H u=0})-\partial_{\xi_{k}} H(\nabla u)^2\partial_k(a_{ij})u_{ij}\\
&=&2a_{ij}a_{kl}u_{ik}u_{jl}-2H(\nabla u)\partial_{\xi_{k}} H(\nabla u)a_{ijl}u_{lk}u_{ij} \,,
\end{eqnarray*}
where $\nabla u \neq 0$, which proves \eqref{Bochner}.

  Since
\begin{equation*}
\mathcal{L} H(\nabla u)^2=a_{ij}\partial^2_{ij}H(\nabla u)^2+a_{ijl} u_{ij}\partial_l H(\nabla u)^2,
\end{equation*}
from \eqref{Bochner} we have
\begin{equation*}
\mathcal{L} H(\nabla u)^2=2a_{ij}a_{kl}u_{ik}u_{jl},
\end{equation*}
and from
\begin{equation*} 
  a_{ij}a_{kl}u_{ik}u_{jl}\geq \dfrac{(a_{ij}u_{ij})^2}{n}+\dfrac{n}{n-1}\left(\dfrac{a_{ij}u_{ij}}{n}-\partial_{\xi_i}H \partial_{\xi_j}H u_{ij}\right)^2,
\end{equation*}
(see \cite[Lemma 2.3]{WangXia}) we obtain
\begin{equation}\label{Lp2}
\mathcal{L} H(\nabla u)^2 \geq \dfrac{2n}{n-1} \left(\partial_{\xi_i} H(\nabla u) \partial_{\xi_j} H(\nabla u)  u_{ij}\right)^2 \,,
\end{equation}
at point where $\nabla u \neq 0$. We set $E_0=\{x \in E :\ \nabla u = 0\}$; since $u\in C^{1,\alpha}$ then $E_0$ is closed. From \eqref{Lp2} we have that $H(\nabla u)^2$ satisfies a maximum principle in $E \setminus E_0$ and hence $\max H(\nabla u)^2$ is attained at $\partial E \cup \partial E_0$. Since $H(\nabla u)=0$ in $ E_0$, we have that $ H(\nabla u)$ attains the maximum at $\partial E$ and \eqref{maxprinc} follows.
\end{proof}

Now we prove a maximum principle for a $P$-function which is suitable for our problem, which take care of the presence of the neck $\mathcal{N}_{\delta}(w)$, $w>0$ (see formula \eqref{neck} for its definition).

 In the following we write $x \in \R^N$ as $x=(x',x_N)$, where $x' \in \R^{N-1}$ and $x_N \in \R$. We will need to introduce a cut-off function $f \in C^2 (\overline \Omega)$ such that
\begin{equation} \label{f_1}
|f| =1 \text { in } \overline\Omega_\delta \setminus \mathcal{N}_{\delta}(w) \,, \quad f= 0 \text { in } \mathcal{N}_{\delta}\left(\frac w2 \right) \, .
\end{equation}
Moreover we choose $f$ such that
\begin{equation} \label{f_2}
 \frac{f}{w} \leq |\nabla f|^2 \quad \text{ and } \quad |\nabla^2 f | \leq \frac{1}{w^2} \,.
\end{equation}
(see \cite{GorbNovikov} for an explicit example in the Euclidean case).

\begin{theorem}\label{MaximumPrinciple}
Let $u_\delta$ be such that $\Delta_H u_\delta = 0$ in $\Omega_\delta$.
Let $f$ satisfy \eqref{f_1} and \eqref{f_2}.

There exists $\lambda_0=\lambda_0(\|f\|_{C^2} , \|H\|_{C^3(\partial B_H(0,1))})$, with $\lambda_0 = O(w^{-2})$ as $w \to 0^+$, such that the function
\begin{equation}\label{pfunction}
  P(x)= f(x) H(\nabla u)^2+ \lambda u^2
\end{equation}
satisfies the maximum principle for any $\lambda \geq \lambda_0$, i.e.
\begin{equation} \label{P_3}
\max_{x \in\overline{\Omega}_\delta} P(x) = \max_{x \in \partial \Omega_\delta} P(x)
\end{equation}
for $\lambda \geq \lambda_0$.
\end{theorem}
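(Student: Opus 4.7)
The plan is to show that $\mathcal L(P)\ge 0$ pointwise on the open set $\{\nabla u_\delta\ne 0\}$ for $\lambda$ large enough and then invoke the standard weak maximum principle for the linear uniformly elliptic operator $\mathcal L$, closing the gap at the degenerate set $E_0=\{\nabla u_\delta=0\}$ exactly as in the proof of Lemma~\ref{lemma_MaxPrinc_Du_u}. Setting $\Phi:=H(\nabla u_\delta)^2$ and using the product rule for $\mathcal L$ in divergence form together with \eqref{P_2} (which gives $\mathcal L(u_\delta^2)=2\Phi$), I obtain the identity
\[
\mathcal L(P)=\mathcal L(f)\,\Phi+2\,a_{ij}f_i\,\Phi_j+f\,\mathcal L(\Phi)+2\lambda\,\Phi.
\]
The goal is to show that the first three summands are controlled by $2\lambda\Phi$ whenever $\lambda=O(w^{-2})$.

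The positive ingredient comes from $f\mathcal L(\Phi)$. The proof of Lemma~\ref{lemma_MaxPrinc_Du_u} yields the Bochner-type identity $\mathcal L(\Phi)=2a_{ij}a_{kl}u_{ik}u_{jl}=2\,\mathrm{tr}((A\nabla^2 u_\delta)^2)$ on $\{\nabla u_\delta\ne 0\}$, and uniform ellipticity of $A$ then gives a constant $c_0>0$ (depending only on $\|H\|_{C^2(\partial B_H(0,1))}$) with $\mathcal L(\Phi)\ge c_0\,|\nabla^2 u_\delta|^2$. For the cross term I use $\Phi_j=2H(\nabla u_\delta)\partial_{\xi_k}H(\nabla u_\delta)u_{kj}$ together with the $0$-homogeneity (hence uniform boundedness) of $\nabla_\xi H$ to bound $|2a_{ij}f_i\Phi_j|\lesssim H(\nabla u_\delta)|\nabla f|\,|\nabla^2 u_\delta|$, and I apply Young's inequality with the splitting $(\sqrt{f}\,|\nabla^2 u_\delta|)\cdot (H(\nabla u_\delta)|\nabla f|/\sqrt{f})$ to produce $\tfrac12 f|\nabla^2 u_\delta|^2+C\,\Phi|\nabla f|^2/f$, the first piece being absorbed by a fraction of $f\mathcal L(\Phi)$. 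For $\mathcal L(f)\Phi=(a_{ij}f_{ij}+a_{ijl}u_{ij}f_l)\Phi$ the first piece is directly bounded by $Cw^{-2}\Phi$ via $|\nabla^2 f|\le w^{-2}$; the second, though $a_{ijl}$ is singular of order $H(\nabla u_\delta)^{-1}$ by the $(-1)$-homogeneity of $\nabla^3 H^2$, becomes after multiplication by $\Phi=H^2$ a term of the same shape $\lesssim H(\nabla u_\delta)|\nabla^2 u_\delta|\,|\nabla f|$ as the cross term, and is absorbed identically.

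Combining these bounds and exploiting the quantitative properties of the cutoff $f$ from \eqref{f_2} gives
\[
\mathcal L(P)\ge \Phi\bigl(2\lambda-C_\star w^{-2}\bigr),
\]
for some $C_\star$ depending only on $\|H\|_{C^3(\partial B_H(0,1))}$ and $\|f\|_{C^2}$, so that the choice $\lambda_0=C_\star/(2w^2)$ secures $\mathcal L(P)\ge 0$ on $\{\nabla u_\delta\ne 0\}$. The standard weak maximum principle for the linear operator $\mathcal L$ (no zeroth-order term) then places $\max P$ on $\partial\Omega_\delta\cup\partial E_0$; on $\partial E_0$ one has $\Phi=0$ by continuity and $P=\lambda u_\delta^2\le\lambda(\max_{\partial\Omega}|\varphi|)^2$ by Lemma~\ref{lemma_maxmin}, a value already attained on $\partial\Omega\subset\partial\Omega_\delta$, whence $\max P$ is realized on $\partial\Omega_\delta$ as required. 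The principal technical hurdle is the anisotropic cross-term analysis: the $a_{ijl}$-piece of $\mathcal L(f)\Phi$ is singular as $|\nabla u_\delta|\to 0$, and one must carefully exploit the compensating factor $H(\nabla u_\delta)$ hidden in $\Phi$ to make the Young-type absorption go through, a feature entirely absent in the Euclidean counterpart of the argument.
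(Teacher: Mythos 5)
Your proof is correct and follows essentially the same route as the paper: the same Leibniz expansion of $\mathcal{L}P$ into $\mathcal{L}(f)H(\nabla u_\delta)^2+2a_{ij}f_i\partial_j H(\nabla u_\delta)^2+f\,\mathcal{L}(H(\nabla u_\delta)^2)+2\lambda H(\nabla u_\delta)^2$, the same homogeneity bounds on $a_{ij}$, $a_{ijl}H(\nabla u_\delta)$ and $\partial_{\xi_i}H$, absorption of the cross terms by Young's inequality to reach $\lambda_0=O(w^{-2})$, and the same treatment of the critical set $\{\nabla u_\delta=0\}$ via Lemma \ref{lemma_maxmin}. The only (harmless, arguably cleaner) deviation is that you absorb the Hessian cross terms into the coercive term $c_0 f|\nabla^2u_\delta|^2$ obtained from $\mathcal{L}(H(\nabla u_\delta)^2)=2\,{\rm tr}\bigl((A\nabla^2u_\delta)^2\bigr)$ and the ellipticity of $A$, whereas the paper pairs them with the quantity $\sqrt{\lambda f}\,\bigl|\partial_{\xi_i}H\partial_{\xi_j}H\,u_{ij}\bigr|$ produced from \eqref{P_1} together with the $2\lambda H(\nabla u_\delta)^2$ term.
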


\begin{proof}
We first prove the assertion when the maximum is attained at a point $x_0$ where $\nabla u (x_0)= 0$, and then we consider the case when $\nabla u (x_0) \neq 0$.

\emph{Step 1}. Suppose that $P$ attains the maximum at point $x_0$ such that $\nabla u(x_0)=0$. Then $P(x_0) =  \lambda u_{\delta}(x_0)^2 $ and
$$
f(x) H(\nabla u_\delta(x))^2 + \lambda u_\delta(x)^2 \leq \lambda u_{\delta}(x_0)^2 
$$
for any $x \in \Omega_\delta$. In particular $|u_\delta(x)| \leq  |u_{\delta}(x_0)|$, and Lemma \ref{lemma_maxmin} yields that $x_0 \in \partial \Omega$.

\emph{Step 2}. Suppose that $P$ attains the maximum at a point $x_0$ such that $\nabla u(x_0)\neq0$.
From \eqref{P_1} and \eqref{P_2} we have
\begin{eqnarray}\label{LP}
  \mathcal{L}P &\geq&  a_{ij}\partial_{ij} f(x) H(\nabla u)^2 +2a_{ij}\partial_i f(x)\partial_{j}H(\nabla u)^2+ a_{ijl} \partial_l f(x)  H(\nabla u)^2 u_{ij}\\
&+ & f(x) \dfrac{2n}{n-1}\left( \partial_{\xi_i} H(\nabla u) \partial_{\xi_j} H(\nabla u)  u_{ij}\right)^2+ 2\lambda H(\nabla u)^2.
\end{eqnarray}

 Since $H$ is 1-homogeneous, the quantities $a_{ij}$, $a_{ijl}H(\nabla u)$ and $\partial_{\xi_i} H$ are 0-homogeneous. Hence there exists a contant $C_0$ depending only on $\|H\|_{C^3(\partial B_H(0,1))}$ such that
\begin{equation}\label{limitati}
|a_{ij}|, |a_{ijl}H(\nabla u)| \leq C_0 \,,\text{and} \, C_0^{-1}\leq |\partial_{\xi_i} H(\nabla u)| \leq C_0.
\end{equation}
From \eqref{LP}, \eqref{limitati} and by using Cauchy-Schwarz inequality, we have\begin{eqnarray*}
  \mathcal{L}P &\geq& \left(\lambda - C_0 \|\nabla^2 f\|_{C^0}\right)H(\nabla u)^2 + 2 \sqrt{\dfrac{2n \lambda f}{n-1}}H(\nabla u)\left|\partial_{\xi_i} H(\nabla u) \partial_{\xi_j} H(\nabla u)  u_{ij}\right|\\
  &-& 4C_0^2\|\nabla^2 f\|_{C^0}\left|\nabla^2 u\right|H(\nabla u)-C_0 \|\nabla f\|_{C^0}\left|\nabla^2 u\right|H(\nabla u)\\
   &=& \left(\lambda - C_0 \|\nabla^2 f\|_{C^0}\right)H(\nabla u)^2 + \left(2C_0^{-2} \sqrt{\dfrac{2n \lambda f}{n-1}}-
 4C_0^2\|\nabla^2 f\|_{C^0}\right)\left|\nabla^2 u\right|H(\nabla u).
\end{eqnarray*}
We can choose $\lambda_0$ large enough such that
$$\lambda - C_0 \|\nabla^2 f\|_{C^0}\geq 0,$$
and
$$2C_0^{-2} \sqrt{\dfrac{2n \lambda f}{n-1}}-4
 C_0^2\|\nabla f\|_{C^0}\geq 0,$$
for $\lambda \geq \lambda_0$. The constant $\lambda_0$ depends only on $\|H\|_{C^3(\partial B_H(0,1))}$ and $\|f\|_{C^2}$, and $\lambda_0 = O(w^{-2})$. From step 1 and step 2 we conclude.
\end{proof}

\section{Uniform bounds for the gradient} \label{sect_unif_bounds}
In this section we give estimates in the region where the gradient remains uniformly bounded.
In the next lemma we show that, since the inclusions are far away from the boundary of $\Omega$, we have that the gradient of $u_\delta$ is uniformly bounded on $\partial \Omega$ independently of $\delta$.

\begin{lemma} \label{lemma_bound_partial_Omega}
Let $u_{\delta}$ be the solution of \eqref{DH}. There exists a constant $C > 0$ independent of $\delta$ such that
\begin{equation}\label{UniformeLimitatezza}
  \max_{\partial \Omega}H(\nabla u_{\delta})\leq C.
\end{equation}
\end{lemma}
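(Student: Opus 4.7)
\textbf{Proof plan for Lemma \ref{lemma_bound_partial_Omega}.}

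The idea is to combine the $L^\infty$ bound $\|u_\delta\|_{L^\infty(\Omega_\delta)}\le\|\varphi\|_{L^\infty(\partial\Omega)}$ from Lemma \ref{lemma_maxmin} with a barrier construction in a collar of $\partial\Omega$ that never sees the inclusions. By hypothesis \eqref{dist_B1B2_K}, the open set
\[
U \;:=\; \{x\in\Omega \,:\, \mathrm{dist}_{H_0}(x,\partial\Omega) < K/2\}
\]
is disjoint from $\overline{D^1_\delta\cup D^2_\delta}$, so in $U$ the function $u_\delta$ is a classical solution of the single equation $\Delta_H u_\delta = 0$ with boundary datum $\varphi$ on $\partial\Omega$. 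The operator $\Delta_H$ and the geometry of $U$ are fixed (independent of $\delta$), and by Lemma \ref{lemma_maxmin} the oscillation of $u_\delta$ on $U$ is at most $2\|\varphi\|_{L^\infty(\partial\Omega)}$, uniformly in $\delta$. Thus once we have a $\delta$-independent gradient bound at $\partial\Omega$ for this \emph{fixed} boundary-value problem, the lemma follows.

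To produce that bound, I would argue pointwise on $\partial\Omega$ via Wulff-shape barriers. Fix $x_0\in\partial\Omega$. Since $\partial\Omega$ is smooth, by Remark \ref{tangent} and the uniform convexity of the Wulff shape (assumption \eqref{norm_unif_ellip}) we can choose $r_0>0$ (depending only on $\Omega$ and $H$, not on $\delta$) and a point $y_0\in U$ such that the Wulff shape $B_{H_0}(y_0,r_0)$ lies in $U$ and touches $\partial\Omega$ only at $x_0$, with a concentric larger Wulff shape $B_{H_0}(y_0,R_0)$ still contained in $U\cup\{x_0\}$. On the annulus $A=B_{H_0}(y_0,R_0)\setminus\overline{B_{H_0}(y_0,r_0)}$, take the explicit solution $v$ of \eqref{pb_v_anello} with $C_{r_0}=\varphi(x_0)+\omega(R_0)$ and $C_{R_0}=\varphi(x_0)-\omega(R_0)-2\|\varphi\|_{L^\infty}$, where $\omega$ is a modulus of continuity of $\varphi$; this choice ensures
\[
v \;\ge\; u_\delta \quad \text{on } \partial A\cap\overline\Omega,
\]
because on $\partial B_{H_0}(y_0,r_0)\cap\overline\Omega$ (which is close to $x_0$) we have $u_\delta\approx\varphi(x_0)$, while on $\partial B_{H_0}(y_0,R_0)$ the lower value $C_{R_0}$ dominates $-\|\varphi\|_{L^\infty}$. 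By the comparison principle Theorem \ref{ComparisionPrinciple} applied in $A\cap\Omega$ we get $u_\delta\le v$ there. A symmetric construction with signs reversed gives a lower barrier. Evaluating at $x_0$, where all three functions agree with $\varphi(x_0)$, and differentiating the explicit formula \eqref{v_anello} (whose gradient is bounded by \eqref{v_grad_bound_N3}--\eqref{v_grad_bound_N2}), yields a $\delta$-independent bound on the $H_0$-normal derivative of $u_\delta$ at $x_0$. The tangential derivatives at $x_0$ coincide with those of $\varphi$, which are controlled by $\|\varphi\|_{C^1(\partial\Omega)}$. Applying $H$, which is equivalent to $|\cdot|$, gives \eqref{UniformeLimitatezza}.

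The main obstacle is making the barrier construction quantitatively compatible with the $H_0$-geometry at the boundary: one must ensure that the tangent Wulff shape at $x_0$ can be chosen with radius $r_0$ bounded below independently of $x_0\in\partial\Omega$, which uses both the $C^2$-regularity of $\partial\Omega$ and the uniform convexity of the unit Wulff shape from \eqref{norm_unif_ellip}. Everything else is a direct application of Lemma \ref{lemma_maxmin}, Theorem \ref{ComparisionPrinciple}, and the explicit estimates \eqref{v_grad_bound_N3}--\eqref{v_grad_bound_N2}.
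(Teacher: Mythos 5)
Your high-level strategy --- restrict attention to a collar of $\partial\Omega$ of width $K/2$, which by \eqref{dist_B1B2_K} never meets the inclusions, and squeeze $u_\delta$ between $\delta$-independent barriers using Lemma \ref{lemma_maxmin} and the comparison principle --- is exactly the paper's. But the pointwise Wulff-annulus barrier you build does not work as written, for three concrete reasons. First, the geometry: you take $B_{H_0}(y_0,r_0)$ \emph{interior} tangent to $\partial\Omega$ at $x_0$ and compare on the annulus $A=B_{H_0}(y_0,R_0)\setminus\overline{B_{H_0}(y_0,r_0)}$; but moving from $x_0$ along the inward normal of $\Omega$ you enter the inner ball $B_{H_0}(y_0,r_0)$, not the annulus, so an inequality $u_\delta\le v$ on $A\cap\Omega$ gives no information about $\partial_\nu u_\delta(x_0)$. (For a one-sided normal-derivative bound at $x_0$ one needs the \emph{exterior} tangent Wulff shape, so that the comparison region is a genuine one-sided neighborhood of $x_0$ in $\Omega$.) Second, the boundary of the comparison region $A\cap\Omega$ contains a third piece, $\partial\Omega\cap A$, which you never check: there $u_\delta=\varphi$, while your $v$ can be as small as $C_{R_0}=\varphi(x_0)-\omega(R_0)-2\|\varphi\|_{L^\infty}<\min_{\partial\Omega}\varphi$, so the hypothesis of Theorem \ref{ComparisionPrinciple} fails on that piece. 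Third, with your constants $v(x_0)=C_{r_0}=\varphi(x_0)+\omega(R_0)\neq u_\delta(x_0)$, so the step ``evaluating at $x_0$, where all three functions agree'' is false. Making a pointwise radial barrier close here requires the usual extra work (subtracting a $C^{1,1}$ extension of $\varphi$, or exploiting $|\varphi(x)-\varphi(x_0)-\nabla\varphi(x_0)\cdot(x-x_0)|\lesssim H_0(x-y_0)-r_0$ on $\partial\Omega$), which your sketch omits.

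The paper sidesteps all of this with a global rather than pointwise barrier: take a fixed smooth set $A$ with $\{ {\rm dist}(x,\partial\Omega)>K/2\}\subset A$, $\overline A\subset\Omega$, containing both inclusions, and let $v_*,v^*$ solve $\Delta_H v=0$ in $\Omega\setminus\overline A$ with datum $\varphi$ on $\partial\Omega$ and the constants $\min_{\partial\Omega}\varphi$, $\max_{\partial\Omega}\varphi$ on $\partial A$. Since $u_\delta$ solves the same equation in $\Omega\setminus\overline A$, equals $\varphi$ on $\partial\Omega$, and lies between those constants on $\partial A$ by Lemma \ref{lemma_maxmin}, comparison gives $v_*\le u_\delta\le v^*$ with equality on \emph{all} of $\partial\Omega$; the normal derivative of $u_\delta$ is then trapped between those of the two fixed, $\delta$-independent functions $v_*,v^*$. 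This removes every modulus-of-continuity and tangency issue at once. I recommend you either adopt that formulation or repair your pointwise construction with an exterior tangent Wulff shape and a correct treatment of the $\partial\Omega\cap A$ portion of the boundary.
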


\begin{proof}
Let $A \subset \Omega$ be a smooth set such that $\{x \in \Omega:\ {\rm dist}(x, \partial \Omega) > K/2 \} \subset A $, with $K$ given by \eqref{dist_B1B2_K}, and $\overline{A} \subset \Omega$. It is clear that $D^1_{\delta}$ and $D^2_{\delta}$ are contained in $A$ for any $\delta \leq \delta_0$.

Let $v_*$ and $v^*$ be the solutions to
$$
\begin{cases}
\Delta_H v_* = 0 &  \text{ in } \Omega \setminus \overline{A} \,, \\
v_*= \varphi &  \text{ on } \partial \Omega \,, \\
v_*= \min_{\partial \Omega} \varphi & \text{ on } \partial A \,,
\end{cases}
$$
and
$$
\begin{cases}
\Delta_H v^* = 0 &  \text{ in } \Omega \setminus \overline{A} \,, \\
v^*= \varphi &  \text{ on } \partial \Omega \,, \\
v^*= \max_{\partial \Omega} \varphi & \text{ on } \partial A \,,
\end{cases}
$$
respectively. From Lemma \ref{lemma_maxmin}, it is clear that $v_*$ and $v^*$ are, respectively, a lower and an upper barrier for $u_\delta$ at any point on $\partial \Omega$. Hence, the normal derivative of $u_\delta$ can be bounded in terms of the gradient of $v_*$, $v^*$, and thus $H(u_\delta)$ can be bounded by some constant $C$ which depends only on $K$ and $\varphi$, which implies \eqref{UniformeLimitatezza}.
\end{proof}

Now we show that the gradient is uniformly bounded on the boundary of the inclusions at the points which are not in the neck.

\begin{lemma} \label{lemma_bound_partial_B}
Let $u_{\delta}$ be the solution of \eqref{DH} and let $w>0$ be fixed. There exists a constant $C > 0$ independent of $\delta$ such that
\begin{equation}\label{UniformeLimitatezza}
  \max_{\partial D^i_{\delta} \setminus \partial \mathcal{N}_{\delta}(w)} H(\nabla u_{\delta})\leq C \,, \quad \quad i=1,2.
\end{equation}
\end{lemma}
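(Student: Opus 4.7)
I would prove this by constructing explicit upper and lower barriers on a Wulff annulus around $D^i_\delta$ and applying the comparison principle. First, since $u_\delta \equiv \mathcal{U}^i_\delta$ is constant on $\partial D^i_\delta$, the gradient $\nabla u_\delta(x_0)$ at any $x_0 \in \partial D^i_\delta$ is parallel to the Euclidean outer normal $\nu(x_0)$, so
$H(\nabla u_\delta(x_0))=|\partial_\nu u_\delta(x_0)|\,H(\nu(x_0))$, and since $H(\nu)$ is uniformly bounded it suffices to bound $|\partial_\nu u_\delta(x_0)|$ uniformly in $\delta$. The next observation is geometric: because the neck $\mathcal{N}_\delta(w)$ quantitatively encloses the (limiting) touching point $O$ of the two Wulff balls, the definition \eqref{neck} together with \eqref{dist_B1B2_K} guarantees that there exists $\rho=\rho(w)>0$ independent of $\delta$ with
$$
\mathrm{dist}_{H_0}(x_0,\,D^j_\delta\cup\partial\Omega)\geq \rho \quad \text{for every } x_0\in \partial D^i_\delta\setminus\partial\mathcal{N}_\delta(w), \ j\neq i.
$$

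Let $M:=\|\varphi\|_{L^\infty(\partial\Omega)}$; by Lemma \ref{lemma_maxmin}, $\|u_\delta\|_{L^\infty(\Omega)}\leq M$ and $|\mathcal{U}^i_\delta|\leq M$. On the Wulff annulus $\mathcal{A} := B_{H_0}(x_\delta^i,R_i+\rho)\setminus\overline{D^i_\delta}$, let $v^\pm$ be the explicit radial solutions given by \eqref{v_anello} with $C_r = \mathcal{U}^i_\delta$ and $C_R = \mathcal{U}^i_\delta\pm 2M$; these satisfy $\Delta_H v^\pm = 0$ in $\mathcal{A}$, equal $\mathcal{U}^i_\delta$ on $\partial D^i_\delta$, and the estimates \eqref{v_grad_bound_N3}-\eqref{v_grad_bound_N2} yield $\|H(\nabla v^\pm)\|_{L^\infty(\mathcal{A})}\leq C(N,R_i,\rho,M)$, with $C$ independent of $\delta$. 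Applying Theorem \ref{ComparisionPrinciple} on a suitable subdomain $\Sigma\subset \mathcal{A}\cap\Omega_\delta$ with $x_0 \in \partial\Sigma\cap\partial D^i_\delta$, one obtains $v^-\leq u_\delta\leq v^+$ on $\Sigma$; since all three functions take the value $\mathcal{U}^i_\delta$ at $x_0$, the Hopf-type comparison of outward normal derivatives gives $|\partial_\nu u_\delta(x_0)|\leq \max\{|\partial_\nu v^+(x_0)|,|\partial_\nu v^-(x_0)|\}\leq C$, uniformly in $x_0$ and $\delta$, which combined with the reduction above concludes the proof.

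The main obstacle is the verification of the boundary inequality $v^-\leq u_\delta\leq v^+$ on all of $\partial\Sigma$. On $\partial D^i_\delta\cap \partial\Sigma$ it is equality, and on the outer Wulff sphere of $\mathcal{A}$ it holds since $|u_\delta-\mathcal{U}^i_\delta|\leq 2M$. The delicate portion is where $\partial\Sigma$ approaches $\partial D^j_\delta$ (necessarily inside the neck): there the radial barrier is pinned near $\mathcal{U}^i_\delta$, while $u_\delta=\mathcal{U}^j_\delta$ may differ by up to $2M$. I would handle this by further intersecting $\mathcal{A}$ with a fixed-size neighborhood of $x_0$, for instance
$$
\Sigma := \mathcal{A}\cap B_{H_0}(x_0,\rho/2),
$$
which, by the geometric estimate above, lies entirely in $\Omega_\delta$ and does not touch $D^j_\delta$. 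The remaining ``curved'' piece of $\partial \Sigma$, namely $\mathcal{A}\cap \partial B_{H_0}(x_0,\rho/2)$, meets $\partial D^i_\delta$ only on a lower-dimensional set where $v^\pm=u_\delta=\mathcal{U}^i_\delta$, and elsewhere both $v^\pm-\mathcal{U}^i_\delta$ and $u_\delta-\mathcal{U}^i_\delta$ can be controlled using the uniform interior Hölder continuity of $u_\delta$ near $\partial D^i_\delta$ (whose constants are $\delta$-independent because $x_0$ is uniformly away from $D^j_\delta\cup \partial\Omega$); a slight inflation of $C_R$ in the barrier then restores the inequality. This is the only nontrivial step; once it is dispatched, the uniform gradient bound follows at once.
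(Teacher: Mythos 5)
Your overall strategy (explicit radial Wulff-annulus barriers plus the comparison principle and a normal-derivative comparison at the tangency point) is the same as the paper's, but your choice of comparison domain creates a gap that you correctly flag but do not actually close. The problem is the ``corner'' where the truncation surface $\mathcal{A}\cap\partial B_{H_0}(x_0,\rho/2)$ meets $\partial D^i_\delta$. Your barrier $v^+$ is pinned to the value $\mathcal{U}^i_\delta$ exactly on $\partial D^i_\delta$, so along the truncation surface $v^+-\mathcal{U}^i_\delta$ vanishes \emph{linearly} in the distance $d$ to $\partial D^i_\delta$ (with slope of order $M/\rho$), whereas the only $\delta$-uniform information you invoke about $u_\delta$ there is interior H\"older continuity, which gives $|u_\delta-\mathcal{U}^i_\delta|\le L\,d^{\alpha}$ with $\alpha<1$. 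Since $L\,d^{\alpha}\gg c\,d$ as $d\to 0$, the required inequality $u_\delta\le v^+$ fails to be verifiable near the corner, and no finite ``inflation of $C_R$'' can repair it: a Lipschitz bound for $u_\delta$ up to $\partial D^i_\delta$ would be needed, which is essentially the conclusion of the lemma. (If instead you invoked $\delta$-uniform $C^{1,\alpha}$ estimates up to the boundary away from the neck, the lemma would follow directly and the barrier would be superfluous.)

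The paper sidesteps this entirely by a different choice of annulus: for each $z\in\partial D^1_\delta\setminus\partial\mathcal{N}_\delta(w)$ it takes the annulus centered at the center $z_0$ of a small Wulff ball $B_{H_0}(z_0,r_1)\subset D^1_\delta$ tangent to $\partial D^1_\delta$ at $z$ from the inside, with outer radius $r_2={\rm dist}_{H_0}(z_0,\partial D^2_\delta)$. Then the comparison region $B_{H_0}(z_0,r_2)\cap\Omega_\delta$ has boundary consisting only of (i) portions of $\partial D^1_\delta$, where the barrier is automatically $\ge \mathcal{U}^1_\delta=u_\delta$ because its level set $\{\overline v=\mathcal{U}^1_\delta\}$ is the \emph{interior} sphere $\partial B_{H_0}(z_0,r_1)$ and the barrier is radially monotone, and (ii) a portion of the outer sphere, where $\overline v=\max_{\partial\Omega}\varphi\ge u_\delta$ by Lemma \ref{lemma_maxmin}; there is no truncation surface and hence no corner. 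The fact that $z$ is outside the neck guarantees $r_2\ge\alpha r_1$ with $\alpha>1$ independent of $\delta$, which makes the resulting gradient bound $\delta$-uniform. If you recast your argument with the annulus centered at such an interior tangent ball rather than at $x^i_\delta$, your proof goes through.
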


\begin{proof}
Let $z \in \partial D^1_{\delta} \setminus \partial \mathcal{N}_{\delta}(w)$ and, for $r_1 \ll R_1$, denote by $B_{H_0}(z_0,r_1)$ the interior anisotropic ball of radius $r_1$ and center $z_0$ tangent to $\partial D^1_{\delta}$ at $z$, i.e. $B_{H_0}(z_0,r_1) \subset D^1_{\delta}$ and $\partial B_{H_0}(z_0,r_1) \cap \partial D^1_{\delta} = \{z\}$ (as follows from the uniform convexity of the norm, see Fig. \ref{Figurar1r2}).
\begin{center}
\begin{figure}[h]
\includegraphics[scale=0.5]{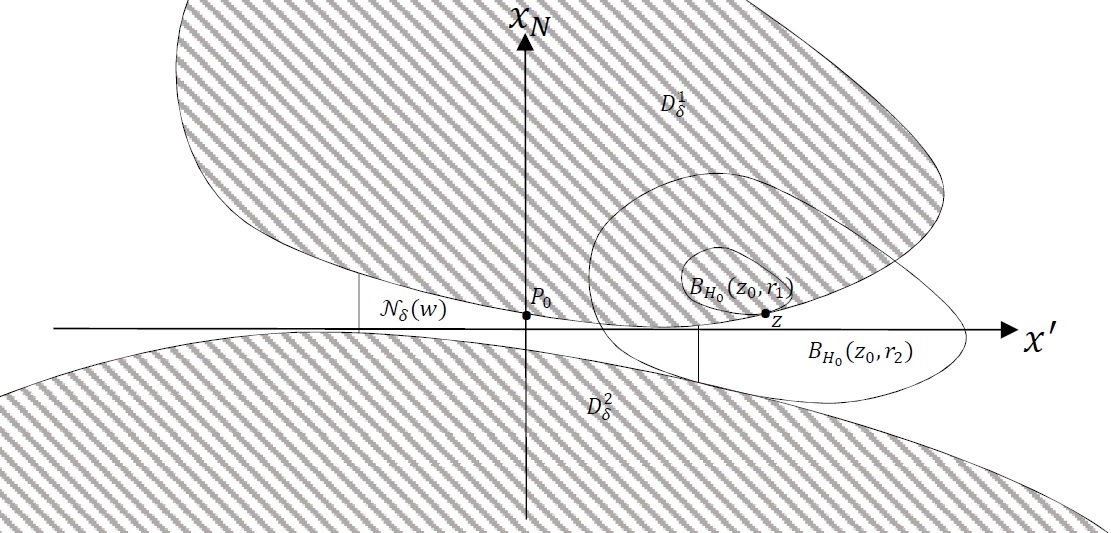}
\caption{$B_{H_0}(z_0,r_1)$ and $B_{H_0}(z_0,r_2)$ are used to construct upper and lower barriers for $u_\delta$ at $z$. \label{Figurar1r2}}
\end{figure}
\end{center}
  Let $r_2$ be the distance of $z_0$ from $\partial D^2_{\delta}$; notice that $r_2>r_1$ and the (anisotropic) ball $B_{H_0}(z_0,r_2)$ is exterior and tangent to $D^2_{\delta}$ at some point $z_1 \in \partial D^2_{\delta}$.

We construct an upper barrier $\overline{v}$ and a lower barrier $\underline{v}$ for $u_\delta$ at $z$ by considering the solutions to
\begin{equation*}
\begin{cases}
\Delta_H \overline{v} = 0 & \text{in } B_{H_0}(z_0,r_2) \setminus \overline{B}_{H_0}(z_0,r_1)\,, \\
\overline{v}=\mathcal{U}^1_{\delta} & \text{on } \partial B_{H_0}(z_0,r_1),\\
\overline{v}=\displaystyle\max_{\partial \Omega} \varphi & \text{on } \partial B_{H_0}(z_0,r_2),
\end{cases}
\end{equation*}
and
\begin{equation*}
\begin{cases}
\Delta_H \underline{v} = 0 & \text{in } B_{H_0}(z_0,r_2) \setminus \overline{B}_{H_0}(z_0,r_1)\,, \\
\underline{v}=\mathcal{U}^1_{\delta} & \text{on } \partial B_{H_0}(z_0,r_1),\\
\underline{v}=\displaystyle\min_{\partial \Omega} \varphi & \text{on } \partial B_{H_0}(z_0,r_2),
\end{cases}
\end{equation*}
respectively, when $\mathcal{U}^i_{\delta}$ are defined in \eqref{potenziale}. As follows from \eqref{v_anello} we have that
$\overline{v}$ and $\underline{v}$ are given by
\begin{equation*}
\overline{v}(x)=
\begin{cases}
\displaystyle(\mathcal{U}^1_{\delta}-\max_{\partial \Omega} \varphi)\dfrac{H_0(x-z_0)^{2-N}-r_2^{2-N}}{r_1^{2-N}-r_2^{2-N}}+\max_{\partial \Omega} \varphi & \text{if } N\geq 3, \\
 & \\
\displaystyle(\mathcal{U}^1_{\delta}-\max_{\partial \Omega} \varphi)\dfrac{\ln (r_2^{-1} H_0(x-z_0))}{\ln (r_2^{-1}r_1 )}+\max_{\partial \Omega} \varphi  & \text{if }  N=2, \end{cases}
\end{equation*}
and
\begin{equation*}
\underline{v}(x)=
\begin{cases}
\displaystyle(\mathcal{U}^1_{\delta}-\min_{\partial \Omega} \varphi)\dfrac{H_0(x-z_0)^{2-N}-r_2^{2-N}}{r_1^{2-N}-r_2^{2-N}}+\min_{\partial \Omega} \varphi & \text{if } N\geq 3, \\
 & \\
\displaystyle(\mathcal{U}^1_{\delta}-\min_{\partial \Omega} \varphi)\dfrac{\ln (r_2^{-1} H_0(x-z_0))}{\ln (r_2^{-1}r_1 )}+\min_{\partial \Omega} \varphi  & \text{if }  N=2, \end{cases}
\end{equation*}
respectively. In particular, by using \eqref{LemmaCianchiSalani11} we have
\begin{equation*}
H(\nabla\overline{v}(z))=
\begin{cases}
(N-2)\left|\mathcal{U}^1_{\delta}-\displaystyle\max_{\partial \Omega} \varphi\right|\dfrac{r_1^{1-N}}{r_1^{2-N}-r_2^{2-N}} & \text{if } N\geq 3, \\
\\
\left|\mathcal{U}^1_{\delta}-\displaystyle\max_{\partial \Omega} \varphi\right|\dfrac{r_1^{-1}}{\ln \left(r_2r_1^{-1}\right)}& \text{if } N=2,
\end{cases}
\end{equation*}
and
\begin{equation*}
H(\nabla\underline{v}(z))=
\begin{cases}
(N-2)\left|\mathcal{U}^1_{\delta}-\displaystyle\min_{\partial \Omega} \varphi\right|\dfrac{r_1^{1-N}}{r_1^{2-N}-r_2^{2-N}} & \text{if } N\geq 3, \\
\\
\left|\mathcal{U}^1_{\delta}-\displaystyle\min_{\partial \Omega} \varphi\right|\dfrac{r_1^{-1}}{\ln \left(r_2 r_1^{-1} \right)}& \text{if } N=2.
\end{cases}
\end{equation*}
We fix $r_1=c w$ for some small constant $c>0$. Since $w>0$ is fixed, there exists a constant $\alpha>1$ such that $r_2 \geq \alpha r_1$ for any $\delta \geq 0$, with $\alpha$ not depending on $\delta$. Hence we have that
$$
\dfrac{r_1^{1-N}}{r_1^{2-N}-r_2^{2-N}} \leq \frac{1}{cw(1 - \alpha^{2-N})} \quad \text{ for } N \geq 3 \,,
$$
and
$$
\dfrac{r_1^{-1}}{\ln \left(r_2r_1^{-1} \right)} \leq \frac{1}{cw \ln \alpha} \quad \text{ for } N=2 \,.
$$
Since the maximum and minimum of $u_\delta$ are attained on $\partial \Omega$ (see Lemma \ref{lemma_maxmin}) then by comparison principle we obtain that
$$
H(\nabla u_\delta (z)) \leq C
$$
where $C$ depends only on the dimension $N$, $\|\varphi\|_{C^0(\partial \Omega)}$ and $w$, and does not depends on $\delta$.
\end{proof}

\begin{lemma} \label{lemma_bound_Omega_neck}
Let $u_{\delta}$ be the solution of \eqref{DH} and let $w>0$. There exists a constant $C > 0$ independent of $\delta$ and $w$ such that
\begin{equation}\label{UniformeLimitatezza}
  \max_{\overline\Omega_\delta \setminus \Pi_w}H(\nabla u_{\delta})\leq \frac C w.
\end{equation}
\end{lemma}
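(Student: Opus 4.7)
The plan is to exploit the $P$-function machinery developed in Theorem \ref{MaximumPrinciple} together with the boundary bounds from Lemmas \ref{lemma_bound_partial_Omega} and \ref{lemma_bound_partial_B}. Choose a cut-off $f$ as in \eqref{f_1}-\eqref{f_2} (with $f \geq 0$, so that $f \equiv 1$ outside $\mathcal{N}_\delta(w)$ and $f \equiv 0$ on $\mathcal{N}_\delta(w/2)$) and set
$$
P(x) = f(x)\, H(\nabla u_\delta(x))^2 + \lambda\, u_\delta(x)^2,
$$
with $\lambda = \lambda_0 = O(w^{-2})$ as provided by Theorem \ref{MaximumPrinciple}. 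By that theorem, $P$ attains its maximum on $\partial \Omega_\delta = \partial \Omega \cup \partial D_\delta^1 \cup \partial D_\delta^2$, so it suffices to bound $P$ on each piece of the boundary by a quantity of order $w^{-2}$.

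On $\partial \Omega$, Lemma \ref{lemma_bound_partial_Omega} furnishes $H(\nabla u_\delta) \leq C_1$ independently of $\delta$, while Lemma \ref{lemma_maxmin} gives $|u_\delta| \leq \|\varphi\|_{C^0(\partial \Omega)}$; hence $P \leq C_1^2 + \lambda_0 \|\varphi\|^2 \leq C/w^2$. On $\partial D_\delta^i \cap \overline{\mathcal{N}_\delta(w/2)}$ we have $f \equiv 0$, so $P = \lambda_0 u_\delta^2 \leq \lambda_0 \|\varphi\|^2 \leq C/w^2$. On $\partial D_\delta^i \setminus \overline{\mathcal{N}_\delta(w/2)}$ we apply the barrier construction of Lemma \ref{lemma_bound_partial_B} with width $w/2$ in place of $w$; inspecting the proof of that lemma, the interior osculating Wulff shape has radius $r_1 = c\,w$, so the resulting estimate in fact yields $H(\nabla u_\delta) \leq C'/w$ with $C'$ depending only on $N$, $\|\varphi\|_{C^0}$, and the norm. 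Squaring and combining with $\lambda_0 u_\delta^2 = O(w^{-2})$, we obtain $P \leq C/w^2$ on this piece as well.

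Putting these three bounds together, $\max_{\partial \Omega_\delta} P \leq C/w^2$ for a constant $C$ independent of both $\delta$ and $w$, and Theorem \ref{MaximumPrinciple} propagates this bound to all of $\overline{\Omega}_\delta$. For any $x \in \overline{\Omega}_\delta \setminus \mathcal{N}_\delta(w)$ we have $f(x)=1$, so
$$
H(\nabla u_\delta(x))^2 \leq P(x) - \lambda_0 u_\delta(x)^2 \leq P(x) \leq \frac{C}{w^2},
$$
which gives the claimed $H(\nabla u_\delta) \leq C/w$.

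The only delicate point is the tracking of the dependence on $w$ throughout. The $w^{-2}$ scaling of $\lambda_0$ from Theorem \ref{MaximumPrinciple}, the explicit $w^{-1}$ scaling hidden in the barrier estimate of Lemma \ref{lemma_bound_partial_B} (coming from $r_1 = cw$ in the annular solution \eqref{v_anello} and the bounds \eqref{v_grad_bound_N3}-\eqref{v_grad_bound_N2}), and the fact that these two scalings balance to produce $P = O(w^{-2})$, are all crucial and must be verified carefully. Everything else is a routine combination of ingredients already established in the paper.
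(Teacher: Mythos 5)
Your proof is correct and follows essentially the same route as the paper: the $P$-function of Theorem \ref{MaximumPrinciple} with $\lambda_0=O(w^{-2})$, the boundary bounds of Lemmas \ref{lemma_maxmin}, \ref{lemma_bound_partial_Omega} and \ref{lemma_bound_partial_B} (the latter applied at width $w/2$, correctly tracked as an $O(1/w)$ bound coming from $r_1=cw$), and the observation that $f\equiv 1$ off the neck converts $P\le C/w^2$ into $H(\nabla u_\delta)\le C/w$. The only cosmetic difference is that the paper first invokes the maximum principle for $H(\nabla u_\delta)$ on $\Omega_\delta\setminus\mathcal{N}_\delta(w)$ and uses the $P$-function only to control the lateral boundary $\partial\mathcal{N}_\delta^{\pm}(w)$, whereas you apply the $P$-function maximum principle globally and read off the bound directly; both are sound.
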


\begin{proof}
From Lemma \ref{lemma_MaxPrinc_Du_u} we know that $H(\nabla u)$ satisfies the maximum principle, so that
$$
\max_{\overline\Omega_\delta \setminus \mathcal{N}_{\delta}(w)}H(\nabla u_{\delta}) \leq \max_{\partial (\Omega_\delta \setminus \mathcal{N}_{\delta}(w))}H(\nabla u_{\delta}) \,.
$$
From Lemmas \ref{lemma_bound_partial_Omega} and \ref{lemma_bound_partial_B} it is enough to find uniform bounds on $H(\nabla u)$ on $\partial \mathcal{N}^{\pm}_{\delta}(w)$, where
\begin{equation*}
  \partial \mathcal{N}^{\pm}_{\delta}(w)= \partial \mathcal{N}_{\delta}(w)\cap \{|\QQ x'|=\pm w\},
\end{equation*}
i.e. we aim at showing that there exists a constant $C$ independent on $\delta$ and $w$ such that
\begin{equation} \label{grad_bound_Pi_pm}
  \max_{\partial \mathcal{N}^{\pm}_{\delta}(w)} H\left(\nabla u_{\delta} \right)\leq \frac{C}{w}.
\end{equation}
Let $P$ be as in Theorem \ref{MaximumPrinciple} (see formula \eqref{pfunction}).

 Since $0 \leq f \leq 1$ and $f=1$ on $\mathcal{N}^{\pm}_{\delta}(w)$, we have that
$$
\max_{\partial\mathcal{N}^{\pm}_{\delta}(w)} H(\nabla u_{\delta})^2
 = \max_{\partial\mathcal{N}^{\pm}_{\delta}(w)}f(x) H(\nabla u_{\delta})^2 \leq \max_{\partial\mathcal{N}^{\pm}_{\delta}(w)} P(x) \leq \max_{\overline \Omega_\delta} P(x) \,.
 $$
From Theorem \ref{MaximumPrinciple} there exists a constant $\lambda_0 = O(w^{-2})$ such that \eqref{pfunction} satisfies the maximum principle for any $\lambda \geq \lambda_0$ and the chain of inequalities above yields
$$
\max_{\partial\mathcal{N}^{\pm}_{\delta}(w)} H(\nabla u_{\delta})^2 \leq \max_{\overline \Omega_\delta} P(x) = \max_{\partial \Omega_\delta} P(x) \,.
$$
Since $\|u_\delta\|_{C^0(\Omega_\delta)} \leq \|\varphi\|_{C^0(\partial \Omega)}$ (see Lemma \ref{lemma_maxmin}) and $\lambda_0=O(w^{-2})$ (see Theorem \ref{MaximumPrinciple}), we have that there exists a constant $C$ independent of $\delta$ and $w$ such that
\begin{equation*}
  P(x)= f(x) H(\nabla u_{\delta})^2+ \lambda u_{\delta}^2\leq f(x) H(\nabla u_{\delta})^2 + Cw^{-2},
\end{equation*}
and hence
\begin{equation*}
\max_{\partial\mathcal{N}^{\pm}_{\delta}(w)} H(\nabla u_{\delta})^2\leq \max_{\partial\Omega_{\delta}}P(\mathbf{x},y)\leq \max_{\partial\Omega_{\delta}} \left[f(x) H(\nabla u_{\delta})^2 \right]+ Cw^{-2}.
\end{equation*}
Since $f=0$ in $\mathcal{N}_{\delta}(w/2)$, from Lemmas \ref{lemma_bound_partial_Omega} and \ref{lemma_bound_partial_B} we find \eqref{grad_bound_Pi_pm} and the proof is complete.
\end{proof}

Before giving the relation between $u_{\delta}$ and $u_{0}$ (see Proposition \ref{prop_udelta_conv_u0} below), in the next Lemma we show that gradient of $u_{0}$ is bounded.

\begin{lemma} \label{lem_u0_W1infty}
Let $u_0$ be the solution to \eqref{D0}. Then $H(\nabla u_0) \leq C$.
\end{lemma}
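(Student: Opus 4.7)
The plan is to apply the maximum principle for $H(\nabla u_0)$ from Lemma \ref{lemma_MaxPrinc_Du_u}, so that the bound reduces to controlling $H(\nabla u_0)$ on the boundary $\partial \Omega_0 = \partial \Omega \cup \partial D_0^1 \cup \partial D_0^2$. On $\partial \Omega$ the barrier construction of Lemma \ref{lemma_bound_partial_Omega} applies verbatim, using only the fixed positive distance $K$ between $\partial \Omega$ and the inclusions. On $\partial D_0^i$ at $H_0$-distance at least $\rho > 0$ from the touching point $O$, the interior/exterior Wulff-shape barrier of Lemma \ref{lemma_bound_partial_B} yields a bound $H(\nabla u_0) \leq C_\rho$ depending only on $\rho$ and the data; indeed, for such a point the exterior ball radius $r_2$ can be chosen uniformly larger than the interior radius $r_1$, which is exactly what made the argument of Lemma \ref{lemma_bound_partial_B} work.

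The delicate step will be controlling $H(\nabla u_0)$ in a full neighborhood of $O$. The key observation is that the solution of \eqref{D0} must satisfy $\mathcal{U}_0^1 = \mathcal{U}_0^2 =: c$, which is precisely why the third condition of \eqref{D0} involves only the sum of the two fluxes, rather than two separate flux conditions as in \eqref{DH}: the touching at $O$ effectively forces the two perfect conductors to share a single common potential, both in the variational formulation \eqref{energia} and in the PDE \eqref{D0} (which is uniquely solvable only after imposing this identification). Consequently, $u_0(x) \to c$ as $x \to O$ within $\Omega_0$.

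With $u_0 \equiv c$ on the portions of $\partial D_0^1 \cup \partial D_0^2$ close to $O$, I would compare $u_0 - c$ on $B_{H_0}(O, \rho) \cap \Omega_0$ with an auxiliary Finsler-harmonic function $w$ that vanishes on both walls of the cusp near $O$ and matches $\max_{\overline{\Omega}_0} |u_0 - c|$ on $\partial B_{H_0}(O,\rho) \cap \Omega_0$; the latter is finite thanks to Lemma \ref{lemma_maxmin}. Applying Theorem \ref{ComparisionPrinciple}, $|u_0 - c| \leq w$ near $O$, and an appropriate Hopf-type estimate for $w$ at $O$ then gives the desired gradient bound there. The main obstacle will be the explicit construction of such $w$ in the cusp: the anisotropic cusp geometry at $O$ makes the standard interior/exterior Wulff barrier degenerate, and one must exploit the specific structure of the two tangent Wulff shapes --- for instance through a Finsler analogue of the Kelvin transform that straightens the two touching Wulff shapes into parallel hyperplanes, reducing the analysis to a Phragm\'en--Lindel\"of-type estimate in a slab --- to produce a comparison function whose gradient stays bounded at $O$. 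Combining this with the boundary bounds from Lemmas \ref{lemma_bound_partial_Omega} and \ref{lemma_bound_partial_B} then yields $H(\nabla u_0) \leq C$ in all of $\Omega_0$.
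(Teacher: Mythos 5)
Your skeleton (maximum principle for $H(\nabla u_0)$ from Lemma \ref{lemma_MaxPrinc_Du_u}, barrier on $\partial\Omega$ as in Lemma \ref{lemma_bound_partial_Omega}, interior/exterior Wulff barriers on the parts of $\partial D_0^i$ away from $O$) matches the paper, and your observation that $\mathcal{U}_0^1=\mathcal{U}_0^2$ is correct and indeed essential --- $u_0\in C^{1,\alpha}$ is constant on each $\overline{D_0^i}$ and the two closures meet at $O$. However, the crucial step --- the gradient bound in a neighborhood of the touching point --- is left as a plan rather than a proof, and the tool you invoke to carry it out is not available. A Kelvin-type inversion straightening the two tangent Wulff shapes into a slab relies on the conformal covariance of the Laplacian; the Finsler Laplacian is a nonlinear, non-conformally-invariant operator, so there is no ``Finsler analogue of the Kelvin transform'' to reduce the cusp to a Phragm\'en--Lindel\"of problem in a slab. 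Without an explicit Finsler-harmonic comparison function $w$ vanishing on both walls of the cusp and Lipschitz at $O$, the argument near $O$ is not closed, and this is exactly the point where the lemma has content.

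The paper avoids the cusp altogether: it compares $u_0$ in $\Omega_0$ with the solutions $v_1,v_2$ of $\Delta_H v_i=0$ in $\Omega\setminus\overline{D_0^1}$ taking the value $\mathcal{U}_0^1$ on $\partial D_0^1$ and $\max_{\partial\Omega}\varphi$ (resp.\ $\min_{\partial\Omega}\varphi$) on $\partial\Omega$. Since only one inclusion is removed, that domain has smooth boundary (no cusp at $O$), so $v_1,v_2$ have gradients bounded on all of $\partial D_0^1$, including at $O$; using $\mathcal{U}_0^1=\mathcal{U}_0^2$ one checks $v_2\le u_0\le v_1$ on $\partial\Omega_0$, hence in $\Omega_0$, and since all three functions coincide on $\partial D_0^1$ the normal derivative of $u_0$ there is squeezed between two bounded quantities (the tangential derivatives vanish because $u_0$ is constant on $\partial D_0^1$). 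If you want to salvage your route, you could take $w$ to be $v_1-c$ (or its reflection), i.e.\ borrow precisely this one-inclusion barrier instead of attempting an explicit construction in the cusp.
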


\begin{proof}
The proof is analogous to the ones of Lemmas  \ref{lemma_bound_partial_Omega} and \ref{lemma_bound_partial_B}, and we only give a sketch. Since $H(\nabla u_0)$ attains the maximum at the boundary (see Lemma \ref{lemma_MaxPrinc_Du_u}), we have to prove that $H(\nabla u_0)$ is bounded on $\partial \Omega$ and on $\partial D^1_0 \cup \partial D^2_0$. First we recall that, in view of the third condition in \eqref{D0}, the maximum and minimum of Lemma $u_0$ are attained at $\partial \Omega$. Hence, the bound on $\partial \Omega$ can be obtained as in the proof of Lemma \ref{lemma_bound_partial_Omega}. The bound on $\partial D^1_0$ (and analogously the one on $\partial D^2_0$) can be obtained by comparison principle, more precisely by comparing $u_0$ and $v_1$ and $v_2$, where $v_i$ is the solution to $\Delta_H v_i=0$ in $\Omega \setminus \overline D^1_0$, $v_i=u_0$ on $\partial D^i_0$, $i=1,2$ $v_1=\max_{\partial \Omega} \phi$ and $v_2=\min_{\partial \Omega} \phi$ on $\partial \Omega$.
\end{proof}

We are ready to show the relation between $u_\delta$ and $u_0$.

\begin{proposition} \label{prop_udelta_conv_u0}
Let $u_{\delta}$ be the solution of \eqref{DH} and $u_0$ be the solution of \eqref{D0}.

There exists a constant $0<\alpha <1 $ not depending on $\delta$ such that
\begin{equation} \label{u_delta_to_u_0}
  \lim_{\delta\rightarrow 0}\|u_{\delta} -u_{0}\|_{C^{1,\alpha}(E)}=0,
\end{equation}
for any compact set $E \subset \Omega_0$. Moreover, for any $i = 1, 2$ and for any neck $\mathcal{N}_\delta(w)$ of (sufficiently small) width $w$ we have
\begin{equation} \label{ferragosto}
   \lim_{\delta\rightarrow 0} \displaystyle\int_{\partial D^i_{\delta}\setminus \partial \mathcal{N}_{\delta}(w)} H\left(\nabla u_{\delta}\right)\nabla_{\xi}H\left(\nabla u_{\delta}\right)\cdot \nu ds= \displaystyle\int_{\partial D^i_0\setminus \partial \mathcal{N}_{\delta}(w)} H\left(\nabla u_{0}\right)\nabla_{\xi}H\left(\nabla u_{0}\right)\cdot \nu ds \,.
\end{equation}
\end{proposition}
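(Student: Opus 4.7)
The plan is to first obtain uniform $C^{1,\alpha}$ estimates for $u_\delta$ on compact subsets of $\Omega_0$, extract a limit and identify it with $u_0$, and then use this convergence to pass to the limit in the boundary integrals. To begin, fix a compact $E\subset\Omega_0$. Because $\overline{D^1_0}\cap\overline{D^2_0}=\{O\}$, the set $E$ has positive distance from $\overline{D^1_0\cup D^2_0}$, and for $\delta$ sufficiently small there exists $w>0$ (independent of $\delta$) with $E\cap\mathcal{N}_\delta(w)=\emptyset$ and $E\subset\Omega_\delta$. Lemmas \ref{lemma_maxmin} and \ref{lemma_bound_Omega_neck} then yield a uniform bound $\|u_\delta\|_{W^{1,\infty}(E)}\le C(E)$. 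The uniform ellipticity assumption \eqref{norm_unif_ellip} makes the coefficient matrix $A=(a_{ij})$ from \eqref{aij} uniformly elliptic with $C^1$ dependence on $\nabla u$ wherever $\nabla u\ne 0$, and $H^2\in C^3$, so $\Delta_H u_\delta=0$ is a quasilinear elliptic equation in divergence form to which the classical Lieberman/Tolksdorf interior $C^{1,\alpha}$ theory applies, giving $\|u_\delta\|_{C^{1,\alpha}(E')}\le C(E')$ for every $E'\Subset\operatorname{int}(E)$. By Arzelà--Ascoli and a diagonal extraction over an exhaustion of $\Omega_0$ by compacts, a subsequence $u_{\delta_k}$ converges in $C^{1,\alpha'}_{\rm loc}(\Omega_0)$ to some $u^*$, where $0<\alpha'<\alpha$.

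The second step is the identification $u^*=u_0$. Passing to the limit in the weak formulation of $\Delta_H u_{\delta_k}=0$ yields $\Delta_H u^*=0$ in $\Omega_0$. Uniform continuity up to $\partial\Omega$, provided by the barrier construction used in Lemma \ref{lemma_bound_partial_Omega}, gives $u^*=\varphi$ on $\partial\Omega$. Since $|\mathcal{U}^i_{\delta_k}|\le\|\varphi\|_{L^\infty}$ we may further extract $\mathcal{U}^i_{\delta_k}\to\mathcal{U}^i_*\in\mathbb{R}$, and boundary regularity on $\partial D^i_0\setminus\{O\}$, combined with the fact that $\partial D^i_{\delta_k}$ is a translate of $\partial D^i_0$ by a vector of order $\delta_k$ along the $x_N$-axis, forces $u^*\equiv\mathcal{U}^i_*$ on $\partial D^i_0\setminus\{O\}$. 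Summing the two flux conditions in \eqref{DH} and passing to the limit using the $C^{1,\alpha'}$ convergence on a neighborhood of $\partial D^i_0\setminus\partial\mathcal{N}_\delta(w)$ gives the sum-flux condition in \eqref{D0}. Uniqueness of $u_0$---via its variational characterization together with Lemma \ref{lem_u0_W1infty} (which in particular forces $\mathcal{U}^1_*=\mathcal{U}^2_*$ by continuity of the limit at the touching point)---identifies $u^*=u_0$, and since the limit is independent of the subsequence, the full family $u_\delta$ converges in the sense of \eqref{u_delta_to_u_0}.

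Finally, for \eqref{ferragosto}: for fixed small $w$ the surface $\partial D^i_\delta\setminus\partial\mathcal{N}_\delta(w)$ lies in a fixed compact subset of $\Omega_0$ for small $\delta$, on which the $C^{1,\alpha'}$ convergence from the first two steps applies. Pulling both boundary integrals back to a common $C^k$ parametrization of $\partial D^i_0\setminus\partial\mathcal{N}_\delta(w)$ via the translation $x^i_\delta-x^i_0$, the integrand $H(\nabla u_\delta)\nabla_\xi H(\nabla u_\delta)\cdot\nu$ and the surface measure converge uniformly to their counterparts for $u_0$, and dominated convergence yields \eqref{ferragosto}. The most delicate step is the identification $\mathcal{U}^1_*=\mathcal{U}^2_*$, since the $C^{1,\alpha'}$ convergence on compact subsets of $\Omega_0$ never sees the neck and does not directly detect whether $u^*$ takes the same value on the two inclusions at the limit; to close this gap one must either invoke a variational argument (uniform $W^{1,2}$ energy bound for $u_\delta$, weak compactness, and the Lipschitz continuity of $u_0$ at $O$ from Lemma \ref{lem_u0_W1infty}) or a direct estimate of the potential drop $\mathcal{U}^1_\delta-\mathcal{U}^2_\delta$ obtained from explicit comparison functions across the neck. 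The remaining steps are standard elliptic regularity and compactness arguments.
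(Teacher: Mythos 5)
Your overall strategy (uniform interior $C^{1,\alpha}$ bounds away from the neck via Lemma \ref{lemma_bound_Omega_neck} and DiBenedetto/Lieberman regularity, compactness, identification of the limit, then passage to the limit in the surface integrals) is the same as the paper's. However, there is a genuine gap in the step where you derive the sum-flux condition of \eqref{D0} for the limit $u^*$. You propose to ``sum the two flux conditions in \eqref{DH} and pass to the limit using the $C^{1,\alpha'}$ convergence on a neighborhood of $\partial D^i_0\setminus\partial\mathcal{N}_\delta(w)$.'' This cannot work as stated: the flux conditions in \eqref{DH} are integrals over the \emph{entire} boundaries $\partial D^i_\delta$, and the contribution from the portion inside the neck is not negligible --- indeed the whole point of Step 2 of the proof of Theorem \ref{thm_main_1} is that $\int_{\partial D^1_\delta\cap\partial\mathcal{N}_\delta(w)}H(\nabla u_\delta)\nabla_\xi H(\nabla u_\delta)\cdot\nu$ converges to $\mathcal{R}_0$, which is in general nonzero. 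Since your locally uniform convergence never reaches the neck, you cannot discard that piece, and you cannot replace it by the corresponding (small, $O(w^{N-1})$) piece for the limit function without already knowing what you are trying to prove. The paper closes this step differently: it applies the divergence theorem on $A_\ep=(D_0^1\cup D_0^2)+B_{H_0}(0,\ep)$, whose boundary is a \emph{single} closed surface enclosing both inclusions \emph{and} the pinch region, and which lies in a fixed compact subset of $\Omega_0$. For every $\delta<\ep$ the total flux of $u_\delta$ through $\partial A_\ep$ is exactly zero (by $\Delta_H u_\delta=0$ and the two flux conditions), this passes to the limit using only the convergence away from the touching point, and letting $\ep\to0$ gives the third line of \eqref{D0}. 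You need this (or an equivalent device) to make your identification step rigorous.

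Two further remarks. First, you are right to flag the question of whether the limit takes a single common value on the two inclusions ($\mathcal{U}^1_*=\mathcal{U}^2_*$); the local convergence indeed does not see this, and your honest admission that a separate argument (energy bound across the neck, or explicit barriers) is needed is appropriate --- the paper itself is terse on this point. Second, in your proof of \eqref{ferragosto} note that $\partial D^i_\delta\setminus\partial\mathcal{N}_\delta(w)$ is contained in $\overline{\Omega_\delta}$, not in the open set $\Omega_0$, so interior estimates alone do not give convergence of the traces of $\nabla u_\delta$ there; one needs the uniform boundary gradient bounds of Lemma \ref{lemma_bound_partial_B} together with $C^{1,\alpha}$ regularity up to the (smooth, uniformly convex) boundary portions away from the neck before pulling back to a common parametrization.
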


\begin{proof}
Thanks to Lemma \ref{lemma_bound_Omega_neck} and \cite[Theorem 2]{Dibenedetto}, for any fixed $w>0$ we have that there exists $\alpha>0$ independent of $\delta$ such that
\begin{equation} \label{unif_regul}
\|u_\delta\|_{C^{1,\alpha}(\mathcal{K})} \leq C \qquad \text{ for any compact set } \mathcal{K} \subset \Omega_\delta \setminus \overline{\mathcal{N}}_{\delta}(w) \,,
\end{equation}
where $C$ is a constant independent of $\delta$.

Let $E$ be a compact set contained in $\Omega_0$. We want to show that $u_\delta$ converges to $u_0$ in $C^{1,\alpha}(E)$. Since $E$ is fixed, there exist $w,\delta_0>0$ such that $E \subset \Omega_\delta \setminus \overline{\mathcal{N}_\delta(w)} $ for any $\delta< \delta_0$. From \eqref{unif_regul} we have that $u_\delta$ converges to some function $\bar{u}$ in $C^{1,\alpha}(E)$, which satisfies $\Delta_H \bar u = 0$ in $E$. In order to show that $\bar u$ is the solution to \eqref{D0}, i.e. $\bar u = u_0$, we only need to check that $\bar u$ satisfies the third line in \eqref{D0}, i.e. that
\begin{equation} \label{third_cond}
 \int_{\partial D_0^1} H\left(\nabla \bar u \right)\nabla_{\xi}H\left(\nabla \bar u \right)\cdot \nu ds +  \int_{\partial D_0^2} H\left(\nabla \bar u \right)\nabla_{\xi}H\left(\nabla \bar u \right)\cdot \nu ds = 0 \,.
\end{equation}
We prove \eqref{third_cond} by approximation. Let $\ep>0$ be fixed and sufficiently small, and let
$$
A_\ep = (D_0^1 \cup D_0^2) + B_{H_0}(0,\ep) \,,
$$
where $A+B = \{a+b : a \in A \,\, \text{and} \,\, b \in B\}$ is the Minkowski sum between the sets $A$ and $B$. If $\delta<\ep$ then $D_\delta^1\cup D_\delta^2 \subset A$ and we have that $u_\delta$ converges to $\bar u$ in $C^{1,\alpha}(\overline \Omega \setminus A)$. From $\Delta_H u_\delta=0$ in $A_\ep \setminus \overline{D_\delta^1\cup D_\delta^2}$ and since $H(\nabla u_\delta) = 0$ in $D_\delta^1$ and $D_\delta^2$, the divergence theorem implies that
\begin{equation*}
\int_{\partial A_\ep} H\left(\nabla  u_\delta \right)\nabla_{\xi}H\left(\nabla u_\delta \right)\cdot \nu ds  = 0 \,.
\end{equation*}
By letting $\delta$ to zero and since $u_\delta \to \bar u$ in $C^{1,\alpha}$, we obtain that
\begin{equation*}
\int_{\partial A_\ep} H\left(\nabla  \bar u \right)\nabla_{\xi}H\left(\nabla \bar u \right)\cdot \nu ds  = 0 \,.
\end{equation*}
Since $\ep>0$ is arbitrary, we obtain \eqref{third_cond} and \eqref{u_delta_to_u_0} is proved.

  Once we have that $u_\delta \to u_0$ in $C^{1,\alpha}$ on compact sets, the proof of \eqref{ferragosto} follows straightforwardly (see for instance \cite[p.736-737]{GorbNovikov}).
\end{proof}

\section{Proof of Theorem \ref{thm_main_1}}\label{sect_thm1}
\emph{Step 1: upper and lower bounds on the gradient in the neck}. Let $w>0$ be fixed. We are going to find upper and lower bounds on the gradient of the solution in the neck in terms of $\mathcal{U}_\delta^1 - \mathcal{U}_\delta^2$, which we assume to be non negative (the case $\mathcal{U}_\delta^1 - \mathcal{U}_\delta^2\leq 0$ is completely analogous). In particular, we aim at showing that for any fixed $\tau \in (0,1/2)$ there exists a constant $C$ independent on $\delta$ such that
\begin{equation} \label{step1_upper_bound}
H(\nabla u_\delta(P)) \nabla_\xi H(\nabla u_\delta (P)) \cdot \nu (P) \leq - \dfrac{\mathcal{U}^1_{\delta}-\mathcal{U}^2_{\delta}}{\delta + (1+ \tau) \frac{R_1+R_2}{2R_1 R_2}  \QQ P^\perp \cdot P^\perp } (1+ o(\delta^2 + |P-P_0|^2) ) + C\,.
\end{equation}
and
\begin{equation} \label{step1_lower_bound}
H(\nabla u_\delta(P)) \nabla_\xi H(\nabla u_\delta (P)) \cdot \nu (P) \geq - \dfrac{\mathcal{U}^1_{\delta}-\mathcal{U}^2_{\delta}}{\delta +  (1-\tau) \frac{R_1+R_2}{2R_1 R_2} \QQ P^\perp \cdot P^\perp } (1+ o(\delta^2 + |P-P_0|^2) ) - C\,.
\end{equation}
for any $P \in \partial D_\delta^1 \cap \partial  \mathcal{N}_{\delta}(w)$, where $P_0 \in \partial D_\delta^1$ lies on the segment joining the two centers of $D_\delta^1$ and $D_\delta^2$, and $P^\perp$ is the projection of $P$ on the orthogonal to $P_0$ (see Fig.\ref{Figurar1r2_1}).

We start by finding a lower bound on $ \nabla u_\delta$ at  $P \in \partial D_\delta^1 \cap \partial \mathcal{N}_{\delta}(w)$.
We consider an anisotropic ball touching $\partial D_\delta^1$ at $P$ from the inside and denote it by $B_{H_0}(y_0,r_1)$, so that $B_{H_0}(y_0,r_1) \subset D_\delta^1$ and $y_0$ and $r_1$ are the center and the radius of the ball, respectively, where we let
$$
r_1 = t R_1 \,.
$$  
We denote by $r_2$ the radius of the anisotropic ball with center at $y_0$ which touches $\partial D_\delta^2$ from the outside, i.e.
$$
r_2={\rm dist}_{H_0} (y_0,\partial D_\delta^2)=\min\{H_0(x-y_0) : x \in \partial D^2_{\delta}\}.
$$

For $x \neq y_0$, let $\underline v$ be given by
\begin{equation*}
\underline{v}(x)=
\begin{cases}
(\mathcal{U}^1_{\delta}-\mathcal{U}^2_\delta)\dfrac{H_0(x-y_0)^{2-N}-r_2^{2-N}}{r_1^{2-N}-r_2^{2-N}}+\mathcal{U}^2_\delta & \text{if } N\geq 3\,, \\
 & \\
\displaystyle(\mathcal{U}^1_{\delta}-\mathcal{U}^2_\delta)\dfrac{\ln (r_2^{-1} H_0(x-y_0))}{\ln (r_2^{-1}r_1 )}+\mathcal{U}^2_\delta  & \text{if }  N=2\,.
\end{cases}
\end{equation*}
Notice that $\Delta_H \underline v=0$ in $\mathbb{R}^N \setminus \{y_0\}$ and $\underline v = \mathcal{U}_\delta^i$ on $\partial B_{H_0}(y_0,r_i)$, $i=1,2$. We notice that we can find a constant $M$, not depending on $\delta$, such that if the ratio
\begin{equation}
\mathcal{M} = \begin{cases}\label{M}
\dfrac{\mathcal{U}^1_{\delta}-\mathcal{U}^2_\delta}{r_1^{2-N}-r_2^{2-N}} & \text{if } N\geq 3\,, \\
 & \\
\dfrac{\mathcal{U}^1_{\delta}-\mathcal{U}^2_\delta}{\ln (r_2 r_1^{-1} )}  & \text{if }  N=2\,.
\end{cases}
\end{equation}
is large enough, say $\mathcal{M}>M$, then $\underline v$ is a lower barrier for $u_\delta$.

Now assume that $\mathcal{M}>M$, so that $\underline v$ is a lower barrier for $u_\delta$. Since
\begin{equation*}
H(\nabla\underline{v}(P))=
\begin{cases}
\dfrac{(\mathcal{U}^1_{\delta}-\mathcal{U}^2_{\delta})(N-2)}{r_1^{2-N}-r_2^{2-N}}r_1^{1-N} & \text{if } N\geq 3\,, \\
 & \\
\dfrac{\mathcal{U}^1_{\delta}-\mathcal{U}^2_{\delta}}{\ln(r_2r_1^{-1})}\dfrac{1}{r_1}  & \text{if }  N=2\,.
\end{cases}
\end{equation*}
from the mean value theorem we have that there exists $\bar r \in (r_1,r_2)$ such that
\begin{equation*}
H(\nabla\underline{v}(P)) =
\dfrac{\mathcal{U}^1_{\delta}-\mathcal{U}^2_{\delta}}{r_2-r_1} \left( \frac{\bar r}{r_1} \right)^{N-1}
\end{equation*}
for any $N \geq 2$, and hence
\begin{equation} \label{nabla_v_lower_bound}
H(\nabla\underline{v}(P)) \geq
\dfrac{\mathcal{U}^1_{\delta}-\mathcal{U}^2_{\delta}}{r_2-r_1}
\end{equation}
for any $N \geq 2$.

Thanks to \eqref{nabla_v_lower_bound} we can give an upper bound on the quantity $H(\nabla u_\delta(P)) \nabla_\xi H(\nabla u_\delta (P)) \cdot \nu(P)$. Indeed, since $\underline v$ is a lower barrier for $u_\delta$ then
\begin{equation}\label{-1}
\nabla_\xi H(\nabla u_\delta (P)) \cdot \nu(P) = - \nabla_\xi H(\nu(P) ) \cdot \nu(P) = - H(\nu(P) ) = -1 \,,
\end{equation}
where the last equality holds because $P$ lies on a Wulff shape. From \eqref{nabla_v_lower_bound} we find
\begin{equation} \label{artista}
H(\nabla u_\delta(P)) \nabla_\xi H(\nabla u_\delta (P)) \cdot \nu (P) \leq - \dfrac{\mathcal{U}^1_{\delta}-\mathcal{U}^2_{\delta}}{r_2-r_1} \,.
\end{equation}
If $\mathcal{M} \leq M$, from elliptic estimates we have $H(\nabla u) \leq C$, where $C$ does not depends on $\delta$. Indeed, from the mean value theorem we have
$$
 \dfrac{\mathcal{U}^1_{\delta}-\mathcal{U}^2_{\delta}}{r_2-r_1} \leq \frac{N-1}{t^{N-1} R_1^{N-1}}  M \,. 
$$
Since $\partial D_\delta^1$ is of class $C^3$, $u_\delta$ is constant on $\partial D_\delta^1$, and the distance of $P$ from $\partial D_\delta^2$ is of size $r_2-r_1$, from interior regularity estimates we have that  $H(\nabla u) \leq C$, where $C$ does not depends on $\delta$.

Hence
\begin{equation} \label{artista2}
H(\nabla u_\delta(P)) \nabla_\xi H(\nabla u_\delta (P)) \cdot \nu (P) \leq - \dfrac{\mathcal{U}^1_{\delta}-\mathcal{U}^2_{\delta}}{r_2-r_1} +  C\,.
\end{equation}
Let $r_1=tR_1$, we have
\begin{equation}\label{r2_meno_r1_inthm}
r_2 - r_1 = \delta + (1-s)\dfrac{R_1+R_2}{2R_1R_2} \mathcal{Q}P^\perp \cdot P^\perp + o(\delta^2 + |\omega|^2)
\end{equation}
as $\delta$ and $|P-P_0|$ go to zero, and where $P^\perp$ is the projection of $P$ on the orthogonal to $P_0$.
We do not prove \eqref{r2_meno_r1_inthm} here, and we postpone its proof in the Appendix \ref{Estimates for the radii of the touching balls} (see Lemma \ref{lemma_diff_radii}).
From \eqref{artista2} and \eqref{r2_meno_r1_inthm} we obtain \eqref{step1_upper_bound}.

Now we obtain the lower bound  \eqref{step1_lower_bound}. We consider a ball $B_H(\bar y, \rho_2)$ touching $\partial D^1_\delta$ at $P$ from the outside and such that the center $\bar y$ is contained in $D^2_\delta$ and we denote by $\rho_1$ the radius of the concentric ball touching $\partial D^1_\delta$ from the inside.
For $x \neq \bar y$, let $\overline v$ be given by
\begin{equation*}
\overline{v}(x)=
\begin{cases}
-(\mathcal{U}^1_{\delta}-\mathcal{U}^2_\delta)\dfrac{H_0(x-\bar y)^{2-N}-\rho_2^{2-N}}{\rho_1^{2-N}-\rho_2^{2-N}}+\mathcal{U}^1_\delta & \text{if } N\geq 3\,, \\
 & \\
-\displaystyle(\mathcal{U}^1_{\delta}-\mathcal{U}^2_\delta)\dfrac{\ln (\rho_2^{-1} H_0(x-\bar y))}{\ln (\rho_2^{-1}\rho_1 )}+\mathcal{U}^1_\delta  & \text{if }  N=2\,.
\end{cases}
\end{equation*}
The function $\overline v$ is such that $\Delta_H \overline v=0$ in $\mathbb{R}^N \setminus \{y_0\}$,  $\overline v = \mathcal{U}_\delta^1$ on $\partial B_{H_0}(y_0,\rho_2)$, and  $\overline v = \mathcal{U}_\delta^2$ on $\partial B_{H_0}(y_0,\rho_1)$.

If the ratio $\mathcal{M}$ (defined as in \eqref{M} is large enough, say $\mathcal{M}>M$ for some constant $M$ not depending on $\delta$, then $\overline v$ is an upper barrier for $u_\delta$ and we obtain that
$$
H(\nabla u_\delta (P)) \leq H(\nabla \overline{v}_\delta (P)) \leq \dfrac{\mathcal{U}^1_{\delta}-\mathcal{U}^2_{\delta}}{\rho_2-\rho_1} \left( \frac{\bar \rho}{\rho_2} \right)^{N-1}
$$
for some $\bar \rho \in (\rho_1,\rho_2)$, and we obtain
$$
H(\nabla u_\delta (P)) \leq\dfrac{\mathcal{U}^1_{\delta}-\mathcal{U}^2_{\delta}}{\rho_2-\rho_1} \,.
$$

By arguing as for the upper bound before, if $\mathcal{M}\leq M$ then we can find a constant $C$ such that $H(\nabla u_\delta (P))\leq C$. Hence, we have that
$$
H(\nabla u_\delta (P)) \leq\dfrac{\mathcal{U}^1_{\delta}-\mathcal{U}^2_{\delta}}{\rho_2-\rho_1} + C \,.
$$
By arguing as in Lemma \ref{lemma_diff_radii_2} below, we can prove that for any fixed $s \in (0,1/2)$ we have that
\begin{equation}\label{r2_meno_r1_inproof}
\rho_2 - \rho_1 = \delta + (1+s) \dfrac{R_1+R_2}{2R_1R_2} \mathcal{Q} P^\perp \cdot P^\perp + o(\delta^2 + |P-P_0|^2)
\end{equation}
as $\delta$ and $|P-P_0|$ go to zero, and where $P^\perp$ is the projection of $P$ on the orthogonal to $P_0$ and from \eqref{-1} we obtain \eqref{step1_lower_bound}.

\medskip

\emph{Step 2: Bounds on $\mathcal{U}_\delta^1 - \mathcal{U}_\delta^2.$}
In this step we aim at proving that for any fixed $\tau \in (0,1/2)$ we have that
\begin{eqnarray} \label{diff_U1U2}
&&(1-\tau) \left(\dfrac{R_1+R_2}{2R_1R_2}\right)^{\frac{N-1}{2}}\left({\rm det}(\mathcal{Q})\right)^{\frac{N-1}{2}} C |\partial_{\xi_N} H_0(P_0)| \mathcal{R}_0 \Psi_N(\delta)  + o(\Psi_N(\delta))\\
&&\leq \mathcal{U}_\delta^1 - \mathcal{U}_\delta^2 \leq (1 + \tau) \left(\dfrac{R_1+R_2}{2R_1R_2}\right)^{\frac{N-1}{2}}\left({\rm det}(\mathcal{Q})\right)^{\frac{N-1}{2}} C |\partial_{\xi_N} H_0(P_0)| \mathcal{R}_0 \Psi_N(\delta) + o(\Psi_N(\delta))\nonumber
\end{eqnarray}
where $C$ depends only on the dimension $N$ and
with 
\begin{equation} \label{PsiNdelta}
\Psi_N(\delta)=
\begin{cases}
\delta^{1/2} & N = 2 \,, \\
(\log(1/\delta))^{-1} & N=3 \,, \\
1 & N \geq 4 \,.
\end{cases}
\end{equation}
Let $w>0$ be fixed. From \eqref{DH} and the divergence theorem we have that
\begin{multline} \label{843}
0 = \int_{\partial D_\delta^1} \pnud \\ = \underbrace{\int_{\partial D_\delta^1 \cap \partial  \mathcal{N}_{\delta}(w)}  \pnud  }_{I_1} + \int_{\partial D_\delta^1 \setminus \partial  \mathcal{N}_{\delta}(w)} \pnud \,.
\end{multline}
We consider the set $E=D_0^1 \cup E_0$, where $E_0$ is some smooth fixed set containing $D_\delta^1$ and not containing $D_\delta^2$, and such that  $\partial\left(E \cap \mathcal{N}_{\delta}(w) \right)\subset \partial D_0^1$ for $w$ small enough.
  Notice that $\partial E \cap \mathcal{N}_{\delta}(w) = \partial E_1 \subset \partial D_0^1$ and
$\partial E \setminus \mathcal{N}_{\delta}(w)= \partial E_2 \subset \partial E_0 $.

  Since $\Delta_H u_\delta =0$ in $\Omega_\delta$ we apply the divergence theorem in $ B_{H_0}(x_0,R)\setminus \overline{\mathcal{N}}_{\delta}(w)$ and we have that
\begin{multline}
\int_{\partial D_\delta^1 \setminus \mathcal{N}_{\delta}(w)} \pnud =  \underbrace{\int_{\partial E_2} \pnud }_{I_2}
\\ +  \underbrace{\int_{E \cap \left(\partial\mathcal{N}^{+}_{\delta}(w))\cup \partial\mathcal{N}^{-}_{\delta}(w))\right)} \pnud}_{I_3} \,.
\end{multline}
Proposition \ref{prop_udelta_conv_u0} and Lemma \ref{UniformeLimitatezza} yield
\begin{equation} \label{adam}
 I_2 = \int_{\partial E_2} \pnuz +o(1),
\end{equation}
as $\delta \rightarrow 0$.
We recall that by definition
$$
\mathcal{R}_0 :=   \int_{\partial D_0^1} \pnuz \,.
$$
Since $u_0 \in W^{1,\infty}$ (see Lemma \ref{lem_u0_W1infty}), by applying the divergence theorem in the set
$E \setminus\overline{D}_0^1$ we have that
$$
\mathcal{R}_0= \int_{\partial E} \pnuz \,,
$$
and from \eqref{adam} we obtain
\begin{equation} \label{844}
\Big{|} I_2 + \mathcal{R}_0  \Big{|} \leq  C w^{N-1}+ o(1),
\end{equation}
as $\delta \rightarrow 0$, where $C$ does not depends on $w$. Notice that, from Lemma \ref{UniformeLimitatezza} we have
$$
\Big{|} I_3 \Big{|} \leq \frac{C \delta}{w},
$$
where $C$ does not depends on $w$.
This last estimate together with \eqref{843} and \eqref{844} yield
\begin{equation*}
| I_1 - \mathcal{R}_0 | \leq Cw^{N-1} +  \frac{C \delta}{w} + o(1) \,.
\end{equation*}
By choosing $w=\delta^{1/2}$ we have that
\begin{equation}\label{I1R0}
| I_1 - \mathcal{R}_0 | = o(1) ,
\end{equation}
as $\delta \to 0^+$.

Now we estimate $I_1$. Together with \eqref{I1R0}, this will imply upper and lower bounds on $\mathcal{U}_\delta^1-\mathcal{U}_\delta^2$. We recall that $I_1$ is given by
$$
I_1 = \int_{\mathcal{I}}  \pnud \,,
$$
where we set $\mathcal{I} = \partial D_\delta^1 \cap \partial  \mathcal{N}_{\delta}(w)$ to lighten the notation.
From \eqref{step1_upper_bound} and \eqref{step1_lower_bound}, we obtain that for ant $\tau \in (0,1/2)$ we have
\begin{equation} \label{int_1}
- \int\limits_{\mathcal{I}} \dfrac{\mathcal{U}^1_{\delta}-\mathcal{U}^2_{\delta}}{\delta +  (1-\tau) \dfrac{R_1+R_2}{2R_1R_2} \mathcal{Q} P^\perp \cdot P^\perp } d\sigma (1+ o(1)) - Cw
 \leq I_1
\end{equation}
and
\begin{equation} \label{int_2}
I_1  \leq - \int\limits_{\mathcal{I}} \dfrac{\mathcal{U}^1_{\delta}-\mathcal{U}^2_{\delta}}{\delta +  (1+ \tau)\dfrac{R_1+R_2}{2R_1R_2} \mathcal{Q} P^\perp \cdot P^\perp } d\sigma (1+ o(1)) + Cw \,.
\end{equation}
Hence, we have to understand the asymptotic behaviour of
\begin{equation} \label{integralone1}
\hat I = \int\limits_{\mathcal{I}} \dfrac{d\sigma}{\delta + c \mathcal{Q}P^\perp \cdot P^\perp }
\end{equation}
as $\delta \to 0$, where $c= (1\pm \tau)\dfrac{R_1+R_2}{2R_1R_2}>0$. Once we have that, being $I_1$ finite, the asymptotic behaviour of $\mathcal{U}^1_{\delta}-\mathcal{U}^2_{\delta}$ follows from \eqref{int_1} and \eqref{int_2}.

We notice that $P^\perp$ lies on $\{x_N = 0\}$, and so we write $P^\perp =x'=(x_1,\ldots,x_{N-1})$ for $P \in \mathcal I$. From the implicit function theorem, there exists a function $\phi : \{|\QQ^{1/2} x'|<w\} \to \mathbb{R}$ such that $H_0(x',\phi(x'))=R_1$, $\phi(0) = \delta$ and $(x',\phi(x')) \in \mathcal I$. Hence \eqref{integralone1} becomes
$$
\hat I =  \int\limits_{\{|\QQ^{1/2} x'|< w\}} \dfrac{ \sqrt{1+|\nabla_{x'} \phi(x')|^2} dx'}{\delta +  c \mathcal{Q} x' \cdot x' } \,.
$$
Since
$$
1+|\nabla_{x'} \phi(x')|^2 = \frac{|\nabla H_0(x',\phi(x'))|^2}{(\partial_{\xi_N} H_0(x',\phi(x')))^2} \,,
$$
and $(x',\phi(x'))$ lies on a Wulff shape, we find that
$$
1+|\nabla_{x'} \phi(x')|^2 = \frac{1}{(\partial_{\xi_N} H_0(x',\phi(x')))^2} =  \frac{1}{(\partial_{\xi_N} H_0(P_0))^2}(1+o(x')) \,,
$$
as $x'\to 0$, and, by letting $y'=c^{\frac{1}{2}}\delta^{-\frac{1}{2}}\mathcal{Q}^{\frac{1}{2}}x'$ we obtain
\begin{equation*}
 \widehat{I}= \left(c\,\,{\rm det}(\mathcal{Q})\right)^{-\frac{N-1}{2}}C_N \left|\partial_{\xi_N} H_0(P_0)\right|^{-1}\Psi_N^{-1}(\delta)(1+o(1))
\end{equation*}
as $\delta \to 0^+$. $\Psi_N(\delta)$ is given by \eqref{PsiNdelta}, where we used Remark \ref{remark calcolointegrale}.

 From \eqref{int_1} and \eqref{int_2} we obtain \eqref{diff_U1U2}. The assumption of the theorem follows from the mean value theorem.

\begin{remark} \label{remark calcolointegrale}
{\em Let $z \in \mathbb{R}^{N-1}$, $I_\delta$ be given by
$$
I_\delta = \int_{|z|<\frac{1}{\sqrt{\delta}}} \frac{dz}{1+|z|^2}
$$
and
$$
\psi_N(\delta) =
\begin{cases}
1 & N=2\,, \\
-\log \delta & N=3 \,, \\
\delta^{-\frac{N-3}{2}}  & N\geq 4 \,,
\end{cases}
$$
Then
$$
\lim_{\delta \to 0} C_N\psi_N(\delta)^{-1} I_\delta =1\,,
$$
where $C_N$ is a constant depending only on the dimension $N$.}
\end{remark}

\appendix

\section{Basic facts for the anisotropic conductivity problem} \label{appendix1}
Let $\Omega$ be a subset of $\mathbb{R}^N$ and $\{D^i\}_{i \in \{1,\ldots,m\}}$ be a family open domains, such that $\overline{D}^i \cap \overline{D}^j = \emptyset$ for $i \neq j$, with boundaries of class $C^{2,\alpha}$, with $0 < \alpha < 1$. Let

\begin{equation*}
  D = \bigcup_{i=1}^mD^i \,.
\end{equation*}

Let $\Omega_{D}=\Omega \setminus \overline D$ and let $\varphi \in C^{2,\alpha}(\Omega)$. As mentioned in the Introducion, the perfectly conductivity problem is the following
\begin{equation} \label{extreme} \tag{$E_H$}
\left\{
   \begin{array}{ll}
    \text{div}\left(H(\nabla u)\nabla_{\xi} H(\nabla u)\right) = 0& \hbox{in} \,\,\, \Omega_{D}, \\
 u_{+} = u_- & \hbox{in} \,\,\, \partial D \,, \\
H(\nabla u)=0 & \hbox{in} \,\,\, D \,, \\
     \displaystyle\int_{\partial D^i} H\left(\nabla u\right)\nabla_{\xi}H\left(\nabla u\right)\cdot \nu ds=0 & i=1,\ldots, m,\\
       u=\varphi & \hbox{on} \,\,\, \partial \Omega \,,
    \end{array}
\right.
\end{equation}
where $\nu$ denotes the outward unit normal to $D$ and $\Omega$.

 By regularity elliptic theory we have that $u \in C^{1,\alpha}(\Omega_D)$ (see \cite{Dibenedetto}) and $H(\nabla u)\nabla_{\xi} H(\nabla u) \in W^{1,2}_{\text{loc}}(\Omega)$ (see \cite{AvelinKuusiMingione,CianchiMazya}).

\begin{theorem}
  There exists at most one solution $u \in H^1(\Omega_D) \cap C^{1,\alpha}(\overline{\Omega}_{D})$ of problem \eqref{extreme}.
\end{theorem}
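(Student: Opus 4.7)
The plan is to argue by the standard energy/convexity method adapted to the Finsler setting. Suppose $u_1, u_2 \in H^1(\Omega_D) \cap C^{1,\alpha}(\overline{\Omega}_D)$ are two solutions of \eqref{extreme}. The goal is to show $\nabla u_1 = \nabla u_2$ almost everywhere in $\Omega_D$ by exploiting the strict convexity of the integrand $\tfrac12 H(\xi)^2$, whose gradient is exactly the vector field $\xi\mapsto H(\xi)\nabla_\xi H(\xi)$ appearing in the equation.

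The first step is to test the weak form of the equation for $u_j$ with the difference $\phi = u_1-u_2$. Since both $u_j$ equal $\varphi$ on $\partial\Omega$ and are constant on each inclusion $D^i$ (by the condition $H(\nabla u_j)=0$ in $D$ together with continuity $u_+=u_-$ on $\partial D$), the difference $\phi$ vanishes on $\partial\Omega$ and is constant on each $D^i$; consequently $\phi$ extends to a function in $H^1_0(\Omega)$. Using the divergence theorem on $\Omega_D$ (justified by the regularity recalled in the excerpt, in particular $H(\nabla u)\nabla_\xi H(\nabla u)\in W^{1,2}_{\mathrm{loc}}$ and $u\in C^{1,\alpha}$), I would compute
\begin{equation*}
\int_{\Omega_D} H(\nabla u_j)\nabla_\xi H(\nabla u_j)\cdot \nabla(u_1-u_2)\,dx
= \sum_i \bigl(u_j|_{D^i}^{(1)}-u_j|_{D^i}^{(2)}\bigr)\int_{\partial D^i} H(\nabla u_j)\nabla_\xi H(\nabla u_j)\cdot\nu\,ds,
\end{equation*}
where the boundary integrals along $\partial\Omega$ vanish because $u_1-u_2\equiv 0$ there. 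Each remaining boundary term pulls the constant difference outside the integral, and what is left is exactly the flux condition from \eqref{extreme}, which is zero. Hence
\begin{equation*}
\int_{\Omega_D}\bigl[H(\nabla u_1)\nabla_\xi H(\nabla u_1)-H(\nabla u_2)\nabla_\xi H(\nabla u_2)\bigr]\cdot\nabla(u_1-u_2)\,dx=0.
\end{equation*}

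The second step is to invoke strict convexity. Since $H^2$ is assumed strictly convex and $C^3$ away from the origin, the vector field $\xi\mapsto \nabla_\xi(H^2/2)(\xi)=H(\xi)\nabla_\xi H(\xi)$ is strictly monotone, i.e. its scalar product with $\xi_1-\xi_2$ is strictly positive whenever $\xi_1\neq\xi_2$. Therefore the integrand above is nonnegative and vanishes only where $\nabla u_1=\nabla u_2$; the identity just derived forces $\nabla u_1=\nabla u_2$ a.e. in $\Omega_D$. Then $u_1-u_2$ is locally constant, and combining with $u_1-u_2=0$ on $\partial\Omega$ yields $u_1\equiv u_2$ on the connected component of $\Omega_D$ touching $\partial\Omega$; on each inclusion the constant value is fixed by continuity across $\partial D^i$, so $u_1=u_2$ on all of $\Omega$.

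The only genuinely delicate point is the rigorous justification of the integration by parts: one has to check that $H(\nabla u_j)\nabla_\xi H(\nabla u_j)$ has a well-defined normal trace on $\partial D^i$ and on $\partial\Omega$ compatible with the $C^{1,\alpha}$ regularity of $u_j$ up to the boundary. This follows from the regularity results cited in the appendix ($u\in C^{1,\alpha}(\overline{\Omega}_D)$ and $H(\nabla u)\nabla_\xi H(\nabla u)\in W^{1,2}_{\mathrm{loc}}$), so the boundary terms make sense and the manipulation above is legitimate. Everything else is a direct consequence of the strict convexity of $H^2/2$ and of the structural conditions in \eqref{extreme}.
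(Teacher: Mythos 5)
Your proposal is correct and follows essentially the same route as the paper: test the weak formulation with $u_1-u_2$, kill the boundary terms on $\partial\Omega$ (where the difference vanishes) and on each $\partial D^i$ (where the constant difference factors out and the zero-flux condition applies), subtract, and invoke the strict monotonicity of $\xi\mapsto H(\xi)\nabla_\xi H(\xi)=\nabla_\xi(H^2/2)$ coming from the strict convexity of $H^2$. The only cosmetic difference is that the paper states a quantitative coercive lower bound $\lambda\int|\nabla(u_1-u_2)|^2$ rather than bare strict monotonicity, but both yield $\nabla u_1=\nabla u_2$ a.e.\ and hence $u_1\equiv u_2$.
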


\begin{proof}
  Let  $u_1$, $u_2 \in H^1(\Omega_D)$ be two solutions of \eqref{extreme}. By multiplying the first equation of \eqref{extreme} by $u_1-u_2$ and integrating by parts, for $j \in \{1,2\}$, we have

\begin{eqnarray*}
  0 &=& \displaystyle\int_{\Omega_{D}} H\left(\nabla u_j\right)\nabla_{\xi}H\left(\nabla u_j\right)\cdot \nabla (u_1-u_2) dx - \displaystyle\int_{\partial \Omega} H(\nabla u_j)\nabla_{\xi} H(\nabla u_j) (u_1-u_2) \cdot \nu ds \nonumber \\
& & \quad +  \sum_{i=1}^m \displaystyle\int_{\partial D^i} H(\nabla u_j)\nabla_{\xi} H(\nabla u_j) (u_1-u_2) \cdot \nu ds\nonumber  \\
&=&  \displaystyle\int_{\Omega_{D}} H\left(\nabla u_j\right)\nabla_{\xi}H\left(\nabla u_j\right)\cdot \nabla (u_1-u_2) dx  \,,\nonumber  \\
\end{eqnarray*}
where in the last equality we used the fourth condition in \eqref{extreme} and the fact that $u_1=u_2$ on $\partial \Omega$.
Thus, by the strong convexity of $H$, we have
\begin{equation*}
  0=\displaystyle\int_{\Omega_{D}} \left( H\left(\nabla u_1\right)\nabla_{\xi}H\left(\nabla u_1\right)-H\left(\nabla u_2\right)\nabla_{\xi}H\left(\nabla u_2\right)\right)\cdot \nabla (u_1-u_2) dx \geq \lambda \displaystyle\int_{\Omega_{D}} \left|\nabla (u_1-u_2)\right|^2 dx \geq 0.
\end{equation*}
Thus $\nabla u_1= \nabla u_2$ in $\Omega_{\delta}$ and, since $u_1=u_2$ on $\partial D^i$, we have $u_1=u_2$ in $\Omega_{D}$.
\end{proof}

We define the energy functional
\begin{equation*}
  I_{\infty}[u]=\dfrac{1}{2} \displaystyle\int_{\Omega_{\delta}} H\left(\nabla u\right)^2 dx,
\end{equation*}
where $u$ belongs to the set
\begin{equation*}
  \mathcal{A} := \left\{u \in W^{1,2}_{\varphi}(\Omega) : H\left(\nabla u\right) =0 \,\, \text{on} \,\, \overline{D}\right\}.
\end{equation*}

\begin{theorem}
There exists a minimizer $u \in\mathcal{A}$ satisfying
\begin{equation*}
   I_{\infty}[u]=\min_{v \in \mathcal{A}} I_{\infty}[v].
\end{equation*}
 Moreover, $u \in W^{1,2}(\Omega_D) \cap C^{1,\alpha}(\overline{\Omega}_{D})$ is a solution to \eqref{extreme}.
\end{theorem}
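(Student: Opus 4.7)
The plan is to establish existence by the direct method of the calculus of variations, and then to identify the Euler--Lagrange conditions by choosing suitable classes of admissible variations, exploiting the fact that any element of $\mathcal{A}$ is forced to be constant on each inclusion $D^i$.

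First I would prove existence of a minimizer. Since $H$ is a norm equivalent to the Euclidean one, $v \mapsto \int_\Omega H(\nabla v)^2 dx$ is equivalent to the squared $W^{1,2}$ seminorm; combined with the Dirichlet data $\varphi$, this gives coercivity on $W^{1,2}_\varphi(\Omega)$. The set $\mathcal{A}$ is convex and weakly closed in $W^{1,2}(\Omega)$: if $u_n \rightharpoonup u$ in $W^{1,2}$ with $u_n \in \mathcal{A}$, then $\nabla u_n \equiv 0$ on $\overline{D}$ passes to the limit, so $u \in \mathcal{A}$. Since $H^2$ is convex, $I_\infty$ is weakly lower semicontinuous. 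Hence a minimizer $u \in \mathcal{A}$ exists, and it is unique by the strict convexity of $H^2$. Regularity $u \in C^{1,\alpha}(\overline{\Omega}_D)$ up to $\partial \Omega_D$ follows from the Di~Benedetto regularity theory for the Finsler Laplacian, once the PDE is identified in $\Omega_D$.

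Next I would derive the Euler--Lagrange conditions. The key observation is that $H(\nabla u)=0$ on $\overline{D}$ means $\nabla u = 0$ on each $D^i$, so $u \equiv \mathcal{U}^i$ on $D^i$ for some constants $\mathcal{U}^i$. Let me consider two families of admissible variations. First, for $\phi \in C_0^\infty(\Omega_D)$ the perturbation $u + t\phi$ lies in $\mathcal{A}$, and stationarity of $I_\infty$ at $u$ yields
\begin{equation*}
\int_{\Omega_D} H(\nabla u)\,\nabla_\xi H(\nabla u) \cdot \nabla \phi\, dx = 0 \qquad \forall\, \phi \in C_0^\infty(\Omega_D),
\end{equation*}
which is the weak form of $\Delta_H u = 0$ in $\Omega_D$. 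Second, I would use variations $\phi \in C_0^\infty(\Omega)$ that are constant on each inclusion: fix $j \in \{1,\ldots,m\}$, take $\phi \equiv 1$ on a neighborhood of $\overline{D^j}$, $\phi \equiv 0$ on a neighborhood of $\overline{D^i}$ for $i \neq j$, and $\phi$ arbitrary otherwise. Then $\nabla(u+t\phi)=0$ on $\overline{D}$, so $u+t\phi \in \mathcal{A}$. Using the PDE just established together with integration by parts on $\Omega_D$ (which is legitimate because $H(\nabla u)\nabla_\xi H(\nabla u) \in W^{1,2}_{\mathrm{loc}}$ by Avelin--Kuusi--Mingione and Cianchi--Maz'ya), the interior term vanishes, the $\partial \Omega$ contribution vanishes because $\phi$ has compact support in $\Omega$, and the boundary term on $\partial D^i$ for $i \neq j$ vanishes because $\phi \equiv 0$ there. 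What remains is
\begin{equation*}
0 = \int_{\partial D^j} H(\nabla u)\,\nabla_\xi H(\nabla u) \cdot \nu\, ds,
\end{equation*}
which is the $j$-th integral condition in \eqref{extreme}. The continuity $u_+ = u_-$ on $\partial D$ is automatic since $u \in W^{1,2}(\Omega) \cap C^{1,\alpha}(\overline \Omega_D)$ has a single trace on $\partial D$.

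The main subtlety is the second variation argument: one has to guarantee that there are enough test functions with the constraint $\nabla \phi = 0$ on $\overline{D}$ to extract one integral identity per inclusion. This is precisely why one chooses $\phi$ to be a smooth bump equal to $1$ on one $D^j$ and $0$ on all other $D^i$, which is possible because the inclusions are pairwise disjoint with positive distance; by varying $j$ one obtains the full system of $m$ compatibility conditions. Once these are in place, uniqueness (already proved in the theorem above it) identifies the minimizer with the unique solution of \eqref{extreme}, and the asserted regularity follows from elliptic theory applied in $\Omega_D$ together with the $C^{2,\alpha}$ smoothness of $\partial \Omega_D$.
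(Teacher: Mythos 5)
Your proposal is correct and follows essentially the same route as the paper: existence via the direct method (which the paper dispatches as ``standard''), the weak equation $\Delta_H u=0$ from interior variations, and the integral conditions on $\partial D^i$ obtained exactly as in the paper by testing with a $\phi\in C_0^\infty(\Omega)$ equal to $1$ near one inclusion and $0$ near the others, then integrating by parts. Your write-up is in fact more detailed than the paper's on the existence step and on why $u+t\phi$ remains admissible, but the key idea is identical.
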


\begin{proof}
The existence of the minimizer and the Euler Lagrange equation $\Delta_H u = 0$ follows from standard methods in the calculus of variations.
 The only thing which we need to shown is the fourth equation of \eqref{extreme}. Let $i \in \{1,\ldots,m\}$ be fixed and let $\phi \in C^{\infty}_0(\Omega)$ be such that
\begin{equation*}
  \phi = \left\{
              \begin{array}{lllll}
                1, & \hbox{on}& \partial D^i, & &\\
                0, & \hbox{on}& \partial D^j, & \hbox{for}& j\neq i.
              \end{array}
            \right.
\end{equation*}
Since $u$ is a minimizer, by integrating by parts we obtain
\begin{eqnarray*}
  0 &=& - \displaystyle\int_{\Omega_{D}} \text{div}\left(H(\nabla u)\nabla_{\xi} H(\nabla u)\right) \phi \, dx \nonumber \\
 &=& \displaystyle\int_{\Omega_{D}} H\left(\nabla u\right)\nabla_{\xi}H\left(\nabla u\right)\cdot\nabla \phi \, dx - \displaystyle\int_{\partial \Omega} H(\nabla u)\nabla_{\xi} H(\nabla u) \phi \cdot \nu ds \nonumber \\
&+ & \sum_{j=1}^m \displaystyle\int_{\partial D^j} H(\nabla u)\nabla_{\xi} H(\nabla u) \phi \cdot \nu ds\nonumber  \\
&=&  \displaystyle\int_{\partial D^i} H\left(\nabla u\right)\nabla_{\xi}H\left(\nabla u\right)\cdot \nabla \phi \,,dx \nonumber  \\
\end{eqnarray*}
and we conclude.
\end{proof}

\section{Estimates for the radii of the touching balls in the proof of Theorem \ref{thm_main_1}}\label{Estimates for the radii of the touching balls}
In this Appendix we prove two technical lemmas needed in the proof of Theorem \ref{thm_main_1}. We recall that $D_\delta^1$ and $D_\delta^2$ are Wulff shapes of radii $R_1$ and $R_2$, respectively.

In the first lemma, for a point $P \in \partial D_\delta^1$ we consider the ball of radius $r_1$ touching $\partial D_\delta^1$ at $P$ from the inside; $r_2$ is the radius of the concentric ball which touches $D_\delta^2$ from the outside (see Fig. \ref{Figurar1r2_1}).
\begin{center}
\begin{figure}[h]
\includegraphics[scale=0.5]{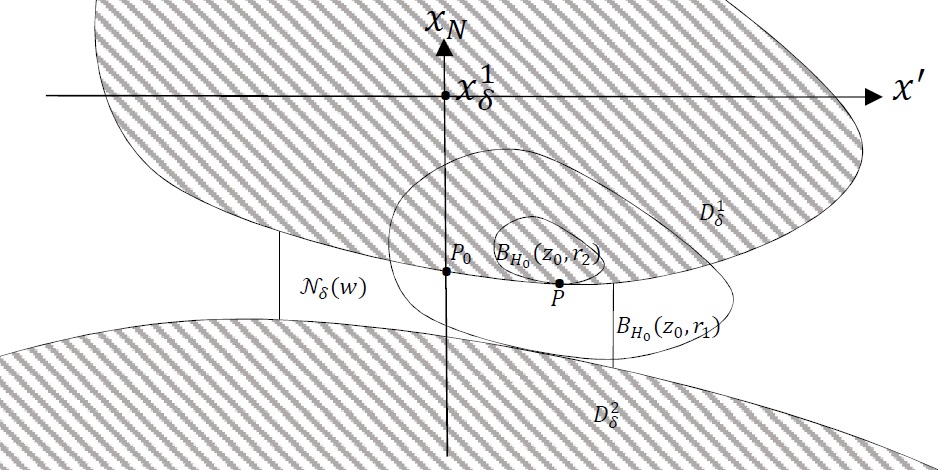}
\caption{\label{Figurar1r2_1}}
\end{figure}
\end{center}
\begin{lemma} \label{lemma_diff_radii}
Let $s \in (0,1]$ and let $P_0,P,r_2$ and $r_1$ be as in the proof of Theorem \ref{thm_main_1}, then
\begin{equation}\label{r2_meno_r1_inlemma}
r_2 - r_1 = \delta + (1-s)\dfrac{R_1+R_2}{2R_1R_2} \mathcal{Q}P^\perp \cdot P^\perp + o(\delta^2 + |\omega|^2)
\end{equation}
as $\delta$ and $|P-P_0|$ go to zero, and where $P^\perp$ is the projection of $P$ on the orthogonal to $P_0$.
\end{lemma}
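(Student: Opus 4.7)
The plan is to combine two applications of Remark~\ref{tangent} with a second-order Taylor expansion of $H_0$, in which the linear term is canceled by substituting the defining equation of $\partial D_\delta^1$.

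First, Remark~\ref{tangent} applied to the inner tangency of $B_{H_0}(y_0, r_1) \subset D_\delta^1$ at $P$ places $y_0$ on the segment joining $x_\delta^1$ and $P$; writing $r_1 = t R_1$ yields $y_0 = x_\delta^1 + (1-t)(P - x_\delta^1)$. Applying Remark~\ref{tangent} again to the outer tangency of $B_{H_0}(y_0, r_2)$ with $\partial D_\delta^2$ gives the identity $r_2 = H_0(x_\delta^2 - y_0) - R_2$. Using that $x_\delta^2 - x_\delta^1 = (R_1 + R_2 + \delta)\hat P$ and $P - x_\delta^1 = R_1 \hat P + (P - P_0)$, this rewrites as
\begin{equation*}
r_2 - r_1 \;=\; H_0\bigl((R_2 + tR_1 + \delta)\hat P - (1-t)(P - P_0)\bigr) - R_2 - tR_1.
\end{equation*}

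Next I Taylor expand $H_0$ about $(R_2 + tR_1 + \delta)\hat P$. By $1$-homogeneity the base value is $R_2 + tR_1 + \delta$, $\nabla H_0$ there equals $\nabla H_0(\hat P) =: \vec{n}$, and $\nabla^2 H_0 = (R_2 + tR_1 + \delta)^{-1}\nabla^2 H_0(\hat P)$. The linear term $-(1-t)\,\vec{n}\cdot(P-P_0)$ does not vanish a priori, because the Euclidean tangent plane of $\partial D_\delta^1$ at $P_0$ need not be perpendicular to $e_N$. To handle it I Taylor expand the defining identity $H_0(P - x_\delta^1) = R_1 = H_0(P_0 - x_\delta^1)$ around $R_1 \hat P$, which produces
\begin{equation*}
\vec{n}\cdot (P-P_0) \;=\; -\frac{1}{2R_1}\,(P-P_0)^{T}\nabla^2 H_0(\hat P)(P-P_0) + O(|P-P_0|^3).
\end{equation*}
Substituting this quadratic replacement into the Taylor expansion of $r_2 - r_1$ gives
\begin{equation*}
r_2 - r_1 \;=\; \delta + \frac{1-t}{2}\left(\frac{1}{R_1} + \frac{1-t}{R_2 + tR_1 + \delta}\right)(P-P_0)^{T}\nabla^2 H_0(\hat P)(P-P_0) + O(|P-P_0|^3).
\end{equation*}

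Finally, since $\hat P$ is parallel to $e_N$ and $\nabla^2 H_0(\hat P)\hat P = 0$ by the Euler identity \eqref{H=0}, the $N$-th row and column of $\nabla^2 H_0(\hat P)$ vanish, so its quadratic form on $P - P_0$ reduces to $\mathcal{Q}\, P^\perp \cdot P^\perp$, where $\mathcal{Q}$ is the block from \eqref{matriceQ} and $P^\perp$ denotes the first $N-1$ coordinates of $P - P_0$. Given $s \in (0,1]$, I then select $t \in [0,1)$ so that the coefficient equals $(1-s)\frac{R_1+R_2}{2R_1R_2}$: such $t$ exists by continuity, since the coefficient takes the value $\frac{R_1+R_2}{2R_1R_2}$ at $t=0$ and vanishes as $t\to 1$. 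The residual $\delta$-dependence in the denominator contributes at most $O(\delta|P-P_0|^2)$ which, together with the cubic Taylor remainder, is absorbed into the stated $o(\delta^2 + |\omega|^2)$. The main technical point is precisely this quadratic cancellation of the linear Taylor term via the constraint $H_0(P - x_\delta^1) = R_1$, which replaces a naive ``horizontal tangent plane'' argument that would fail whenever $\nabla H_0(\hat P)$ is not parallel to $e_N$.
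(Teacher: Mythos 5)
Your proof is correct and follows essentially the same route as the paper's: parametrize the inner ball by $r_1=tR_1$, write $r_2+R_2$ as $H_0$ of the difference of centers, Taylor expand $H_0$ to second order, and cancel the linear term using the constraint that $P$ lies on $\partial D_\delta^1$ (your identity $H_0(P-x_\delta^1)=R_1$ is the paper's $H_0(\nu_H(P))=1$ rescaled by $R_1$), then reduce $\nabla^2H_0(\hat P)$ to $\mathcal{Q}$ via \eqref{H=0}. Your explicit intermediate-value selection of $t$ in terms of $s$ is a detail the paper leaves implicit, but the computations and the resulting coefficient $\frac{(1-t)(R_1+R_2)}{2R_1(tR_1+R_2)}$ agree.
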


\begin{proof}
Without loss of generality, we may assume that the ball $D_\delta^1$ has center at the origin and $D_\delta^2$ has center in $Z=(0,\ldots,0, Z_N)$, with $Z_N<0$ and  $H_0(Z)= R_1+R_2+\delta$. Let $Q$ be the center of the ball of radius $r_1=t R_1$, $t \in (0,1]$, touching $\partial D_\delta^1$ at $P$ from the inside, and let $r_2$ be the radius of the ball centered at $Q$ which is tangent to $\partial D_\delta^2$. In particular
\begin{equation} \label{dic1}
r_2+R_2 = H_0(Q-Z) \,.
\end{equation}

It is clear that, denoting by $\nu$ and $\nu_H$ the Euclidean and anisotropic norms, respectively, we have
\begin{equation*}
  \nu_H := \nu^{ext}_H= \nabla_{\xi}H\left(\nu(x)\right) = \nabla_{\xi}H\left(-\dfrac{\nabla u}{|\nabla u|}\right)= \dfrac{x}{H_0(x)}= \dfrac{x}{R_1},
\end{equation*}
at any point on $\partial D_\delta^1$. Being
\begin{equation}\label{P-Q1H}
  Q-P=-tR_1  \nu_{H}(P),
\end{equation}
\begin{equation}\label{PH}
  P=R_1 \nu_H(P),
\end{equation}
 and
\begin{equation}\label{ZH}
  Z=(R_1+R_2 +\delta) \nu_H(P_0),
\end{equation}
then, by using \eqref{P-Q1H}, \eqref{PH} and \eqref{ZH}, we have that \eqref{dic1} can be written as
\begin{eqnarray}\label{r1+RH}
r_2+R_2 & = & H_0(Q-Z)= H_0(Q-P+P-Z)\nonumber \\
&=& H_0\left((1-t) R_1 \nu_H(P)- (R_1+R_2+\delta) \nu_H(P_0)\right)\nonumber \\
&=& H_0\left((1-t) R_1 \nu_H(P_0) + (1-t) R_1 (\nu_H(P) - \nu_H(P_0)) - (R_1+R_2+\delta) \nu_H(P_0)\right)\nonumber \\
&=& R_1 H_0\left(\left(\frac{R_2}{R_1}+t \right)  \nu_H(P_0) + \psi  \right)\nonumber
 \end{eqnarray}
where
$$
\psi = \frac{\delta}{R_1} \nu_H(P_0) - (1-t)  (\nu_H(P) - \nu_H(P_0))
$$
is small for $\delta$ small and $P$ close to $P_0$.
By Taylor expansion and using the homogeneities properties of $H_0$, we have
\begin{equation*}
\begin{split}
\frac{r_2}{R_2} + 1 &  = \frac{R_1}{R_2}  \left\{\left(\frac{R_2}{R_1}+t \right)  H_0(\nu_H(P_0)) + \nabla H_0(\nu_H(P_0)) \cdot \psi + \frac12 (1+t)^{-1} \nabla^2 H_0(\nu_H(P_0)) \psi \cdot \psi  + o(|\psi|^2)\right\}  \\
& = \frac{R_1}{R_2}  \left\{  \frac{R_2}{R_1} +t + \nabla H_0(\nu_H(P_0)) \cdot \psi  + \frac12 (1+t)^{-1} \nabla^2 H_0(\nu_H(P_0)) \psi \cdot \psi   + o(|\psi|^2) \right\}\,.
\end{split}
\end{equation*}
as $\delta \to 0^+$ and $P \to P_0$. Since $tR_1=r_1$ we have
\begin{equation}\label{r_2-r_1_Istep}
\frac{r_2 - r_1}{R_2} =  \dfrac{R_1}{R_2}\left(\nabla H_0(\nu_H(P_0)) \cdot \psi + \frac12  (1+t)^{-1}\nabla^2 H_0(\nu_H(P_0)) \psi \cdot \psi + o(|\psi|^2) \right) \,.
\end{equation}
Since
$$
\nabla H_0\left(\nu_H(P_0)\right)\nu_H(P_0) = 1 \,,
$$
then
$$
\nabla H_0(\nu_H(P_0)) \cdot \psi  = \frac{\delta}{R_1} - (1-t) \nabla H_0(\nu_H(P_0)) \cdot (\nu_H(P) - \nu_H(P_0)) \,.
$$
and being
$$
\nabla^2 H_0\left(\nu_H(P_0)\right)  \nu_H(P_0)  = 0 \,,
$$
we find
$$
 \nabla^2 H_0(\nu_H(P_0)) \psi \cdot \psi  =  (1-t)^2 \nabla^2 H_0(\nu_H(P_0)) (\nu_H(P) -\nu_H(P_0)) \cdot (\nu_H(P) -\nu_H(P_0)) \,.
$$
From \eqref{r_2-r_1_Istep} we obtain
\begin{equation}\label{r_2-r_1_IIstep}
\frac{r_2 - r_1}{R_2} =  \frac{\delta}{R_2} +\dfrac{R_1}{R_2}\left[ -(1-t) \nabla H_0(\nu_H(P_0)) \cdot \omega + \frac{(1-t)^2}{2\left(\dfrac{R_2}{R_1}+t\right)} \nabla^2 H_0(\nu_H(P_0)) \omega \cdot \omega + o(\delta^2 + |\omega|^2)  \right]\,.
\end{equation}
where we set
$$
\omega = \nu_H(P) -\nu_H(P_0) \,.
$$

Now we observe that $1 = H_0\left(\nu_H(P)\right)= H_0\left(\nu_H(P_0)+\omega\right) $ which gives
$$
  1=  H_0\left(\nu_H(P_0)\right) +\nabla H_0\left(\nu_H(P_0)\right) \cdot \omega +
\dfrac{1}{2}  \nabla^2 H_0\left(\nu_H(P_0)\right)\omega \cdot \omega + o(|\omega|^2) \,,
$$
so that, being $H_0(\nu_H(P_0))=1$,
\begin{equation*}
  -  \nabla H_0\left(\nu_H(P_0)\right) \cdot \omega =\dfrac{1}{2}
 \nabla^2 H_0\left(\nu_H(P_0)\right)\omega \cdot \omega+  o(|\omega|^2),
\end{equation*}
and \eqref{r_2-r_1_IIstep}
\begin{equation*}\label{r_2-r_1_IIstep}
r_2 - r_1 = \delta + R_1 \frac{1-t}{2}\dfrac{1+\dfrac{R_2}{R_1}}{t+\dfrac{R_2}{R_1}} \nabla^2 H_0(\nu_H(P_0)) \omega \cdot \omega + o(\delta^2 + |\omega|^2) \,.
\end{equation*}
From \eqref{H=0}, we notice that the range of $\nabla^2 H_0(\xi)$ lies in $\xi^\perp$ and hence
\begin{equation}\label{r_2-r_1_IIstep}
r_2 - r_1 = \delta +  R_1 \frac{1-t}{2}\dfrac{1+\dfrac{R_2}{R_1}}{t+\dfrac{R_2}{R_1}} \nabla^2 H_0(\nu_H(P_0)) \nu_H(P)^\perp \cdot \nu_H(P)^\perp + o(\delta^2 + |\omega|^2) \,,
\end{equation}
where $\nu_H(P)^\perp$ is the projection of $\nu_H(P)$ on the orthogonal to $\nu_H(P_0)^\perp$. Since $P$ and $P_0$ are on the boundary of the Wulff shape, we have $\nu_H(P)=P/R_1$, $\nu_H(P_0)=P_0/R_1=\hat{P}$ and from \eqref{matriceQ} we obtain \eqref{r2_meno_r1_inlemma}.
\end{proof}

In the following lemma, for a point $P \in \partial D_\delta^1$ we consider a ball of radius $\rho_2$ touching $\partial D_\delta^1$ at $P$ from the outside and having center inside $D_\delta^2$; $\rho_1$ is the radius of the concentric ball which touches $D_\delta^2$ from the inside (see Fig. \ref{Figurarho1rho2_2}).

\begin{lemma} \label{lemma_diff_radii_2}
Let $P_0,P,\rho_2$ and $\rho_1$ be as in the proof of Theorem \ref{thm_main_1}. There exists a constant $C$ independent of $\delta$ and $w$ such that for any $\delta + C|w|<t<\dfrac{1}{2}$, with $\delta$ and $|w|$ sufficiently small, we have
\begin{equation}\label{r2_meno_r1_inlemma}
\rho_2 - \rho_1 = \delta + (1+t) \dfrac{R_1+R_2}{2R_1R_2} \mathcal{Q} P^\perp \cdot P^\perp + o(\delta^2 + |P-P_0|^2)
\end{equation}
as $\delta$ and $|P-P_0|$ go to zero, and where $P^\perp$ is the projection of $P$ on the orthogonal to $P_0$.
\end{lemma}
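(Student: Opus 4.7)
The plan is to mirror the proof of Lemma~\ref{lemma_diff_radii}, with the inside touching ball replaced by an outside one; the new ingredient is that $\rho_2$ now plays the role of a free radius parameter which we shall tune so as to produce the prescribed coefficient $(1+t)$. First I would place $D_\delta^1 = B_{H_0}(0,R_1)$ at the origin and $D_\delta^2 = B_{H_0}(Z,R_2)$ with $Z=(0,\ldots,0,Z_N)$, $Z_N<0$ and $H_0(Z)=R_1+R_2+\delta$. Using $P = R_1\,\nu_H(P)$ together with $H_0(\nu_H(P))=1$ from \eqref{LemmaCianchiSalani1}, external tangency at $P$ forces $\bar y = P + \rho_2\,\nu_H(P)$, while $\bar y\in D_\delta^2$ together with the concentric inside tangency to $\partial D_\delta^2$ gives $\rho_1 = R_2 - H_0(\bar y - Z)$. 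Writing $\hat P = \nu_H(P_0)$ and $\omega = \nu_H(P)-\hat P$, a direct computation yields the key geometric identity
\[
\bar y - Z = (\rho_2-R_2-\delta)\,\hat P + (R_1+\rho_2)\,\omega.
\]

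Next I would Taylor expand $H_0$ at $\hat P$. Setting $B := R_2+\delta-\rho_2$, central symmetry and $1$-homogeneity give $H_0(\bar y - Z) = B\,H_0(\hat P + \psi)$ with $\psi = -(R_1+\rho_2)\,\omega/B$, and a second-order expansion together with $\nabla^2 H_0(\hat P)\hat P = 0$ from \eqref{H=0} produces
\[
H_0(\bar y - Z) = B - (R_1+\rho_2)\,\nabla H_0(\hat P)\cdot\omega + \frac{(R_1+\rho_2)^2}{2B}\,\QQ\,\omega\cdot\omega + o(|\omega|^2).
\]
I would then eliminate the first-order contribution exactly as in the proof of Lemma~\ref{lemma_diff_radii}: since $\nu_H(P) \in \partial B_{H_0}(0,1)$, the identity $H_0(\hat P+\omega)=1$ yields $\nabla H_0(\hat P)\cdot\omega = -\tfrac12\,\QQ\,\omega\cdot\omega + o(|\omega|^2)$. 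Combining, and using $B+R_1+\rho_2 = R_1+R_2+\delta$, $\omega=(P-P_0)/R_1$, and the fact that $\QQ$ annihilates the $\hat P$-direction (so that $\QQ\,\omega\cdot\omega = R_1^{-2}\,\QQ P^\perp\cdot P^\perp$), the two surviving second-order contributions collapse into
\[
\rho_2-\rho_1 = \delta + \frac{(R_1+\rho_2)(R_1+R_2+\delta)}{2\,B\,R_1^2}\,\QQ P^\perp\cdot P^\perp + o(\delta^2+|P-P_0|^2).
\]

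To finish, I would choose $\rho_2$ so that the coefficient of $\QQ P^\perp\cdot P^\perp$ agrees with $(1+t)\tfrac{R_1+R_2}{2R_1R_2}$; a short algebraic inversion (absorbing the $\delta$-dependent corrections as $o(\delta^2+|P-P_0|^2)$ in the limit $\delta\to 0$) gives the explicit choice $\rho_2 = tR_1R_2/[R_2+(1+t)R_1]$. The upper bound $t<1/2$ keeps $B$ uniformly bounded away from zero, so that $\psi = O(|\omega|)$ and the Taylor remainder is genuinely $o(|P-P_0|^2)$; the lower bound $t>\delta+C|w|$ is the precise condition that simultaneously guarantees the geometric admissibility $\bar y\in D_\delta^2$ throughout the neck (which, via the identity above, requires $\rho_2 > \delta + O(|\omega|^2)$) and that the explicit quadratic term uniformly dominates the remainder. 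The hard part, as I see it, will be exactly this uniform $o$-symbol bookkeeping as $t$ approaches its lower threshold: the constant $C$ in the lower bound must absorb both the $\delta$-corrections to the coefficient and the Taylor remainder, and must be chosen independently of $\delta$ and $w$, which is what forces the stated range $\delta+C|w|<t<1/2$ rather than a naive condition such as $t>\delta$.
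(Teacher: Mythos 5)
Your proposal is correct and follows essentially the same route as the paper's proof: write $\bar y-Z=(\rho_2-R_2-\delta)\nu_H(P_0)+(R_1+\rho_2)\omega$, factor out the scalar and Taylor-expand $H_0$ at $\nu_H(P_0)$ to second order, kill the first-order term via $H_0(\nu_H(P))=1$, and use $\nabla^2H_0(\hat P)\hat P=0$ to reduce the quadratic form to $\QQ P^\perp\cdot P^\perp$. The only (harmless) differences are that you factor out $R_2+\delta-\rho_2$ instead of $R_2-\rho_2$, and that you make explicit the choice of $\rho_2$ realizing the coefficient $(1+t)$, which the paper leaves implicit by simply setting $\rho_2=\bar t R_1$.
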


\begin{proof}
By arguing as in the proof of Lemma \ref{lemma_diff_radii} we have that $\rho_1$ and $\rho_2$ are related by the following identity
$$
R_2 - \rho_1 = H_0((R_1+\rho_2)\nu_H(P) - (R_1+R_2+\delta) \nu_H(P_0)) \,.
$$
By simple manipulations we have
$$
R_2-\rho_1 = (R_2-\rho_2) H_0( \nu_H(P_0) + \phi ),
$$
where
$$
\phi = \frac{(R_1+ \rho_2)(\nu_H(P) - \nu_H(P_0)) - \delta \nu_H(P_0)}{\rho_2-R_2} \,.
$$
Hence
$$
R_2-\rho_1 = (R_2-\rho_2) \left\{ H_0( \nu_H(P_0)) + \nabla H_0( \nu_H(P_0)) \cdot \phi + \frac12 \nabla^2 H_0( \nu_H(P_0)) \phi \cdot \phi  + o(|\phi|^2) \right\},
$$
as $\delta \to 0$ and $P\to P_0$, and being $H_0(\nu_H(P_0))=1$, we find
$$
\rho_2-\rho_1 = (R_2-\rho_2) \left\{  \nabla H_0( \nu_H(P_0)) \cdot \phi + \frac 12 \nabla^2 H_0( \nu_H(P_0)) \phi \cdot \phi  + o(|\phi|^2) \right\} \,.
$$
As done in the previous lemma, we have that
\begin{equation*}
\begin{split}
 \nabla H_0( \nu_H(P_0)) \cdot \phi & =  -  \frac{R_1+ \rho_2}{R_2 - \rho_2} \nabla H_0( \nu_H(P_0)) \cdot   \omega  + \frac{\delta}{R_2-\rho_2} \nabla H_0( \nu_H(P_0)) \cdot\nu_H(P_0)\\
& =  \frac{\delta}{R_2-\rho_2}  -  \frac{R_1+ \rho_2}{R_2 - \rho_2} \nabla H_0( \nu_H(P_0)) \cdot   \omega   \\
& =  \frac{\delta}{R_2-\rho_2} + \frac{R_1+ \rho_2}{R_2 - \rho_2} \dfrac{1}{2}
 \nabla^2 H_0\left(\nu_H(P_0)\right)\omega \cdot \omega+  o(|\omega|^2),
\end{split}
\end{equation*}
and we find
$$
\rho_2 - \rho_1 = \delta + \frac{R_1+\rho_2}{2}\dfrac{R_1+R_2}{R_2-\rho_2} \nabla^2 H_0\left(\nu_H(P_0)\right) \nu_H(P) \cdot \nu_H(P)+  o(|\omega|^2 + \delta^2) \,.
$$
We choose $\rho_2= \overline{t} R_1$ with $\overline{t}>0$. Since $\partial D_{\delta}^1$ and $\partial D_{\delta}^2$ are smooth, there exists a constant $\overline{C}>0$ such that $\overline{t}>\delta + \overline{C}|w|^2$. Being $\nu_H(P)=P/R_1$ and $\nu_H(P_0)=P_0/R_1$, we conclude.
\end{proof}

\begin{center}
\begin{figure}[h]
\includegraphics[scale=0.5]{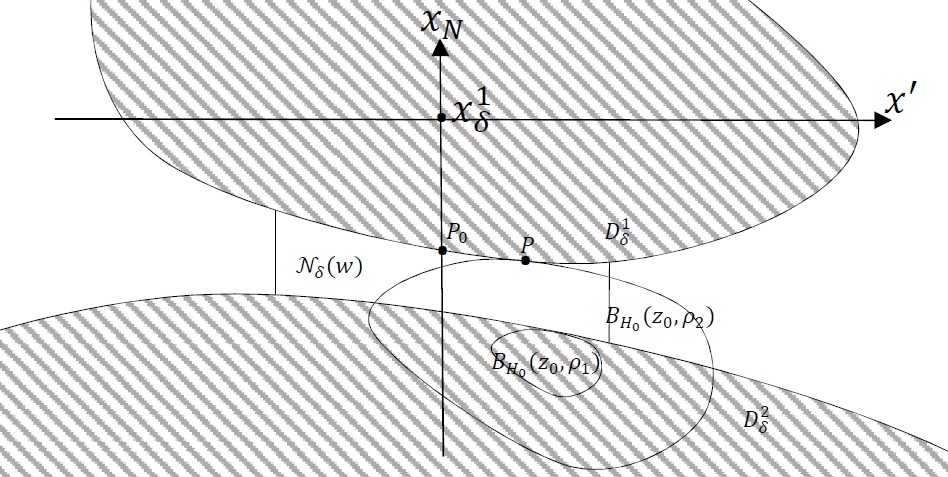}
\caption{\label{Figurarho1rho2_2}}
\end{figure}
\end{center}

{\renewcommand{\addtocontents}[2]{}
\section*{Acknowledgements}}

This work was supported by the project FOE 2014 ``Strategic Initiatives for the Environment and Security - SIES" of the Istituto Nazionale di Alta Matematica (INdAM) of Italy.
The authors have been partially supported by  the ``Gruppo Nazionale per l'Analisi Matematica, la Probabilit\`{a} e le loro Applicazioni (GNAMPA)'' of the ``Istituto Nazionale di Alta Matematica'' (INdAM).

\end{document}